\documentclass[11pt]{article}

\usepackage[a4paper,margin=1in]{geometry}
\usepackage{amsmath}
\usepackage{amssymb}
\usepackage{booktabs}
\usepackage{array}
\usepackage{multirow}
\usepackage{bm}
\usepackage{mathtools}
\usepackage{stmaryrd}
\usepackage{enumitem}
\usepackage{xcolor}
\usepackage{graphicx}
\usepackage{amsthm}
\usepackage[colorlinks=true,linkcolor=blue,citecolor=blue,urlcolor=blue]{hyperref}
\usepackage[nameinlink,capitalize,noabbrev]{cleveref}

\theoremstyle{plain}
\newtheorem{theorem}{Theorem}
\newtheorem{lemma}{Lemma}
\newtheorem{corollary}{Corollary}
\newtheorem{proposition}{Proposition}
\theoremstyle{definition}
\newtheorem{definition}{Definition}
\newtheorem{assumption}{Assumption}
\theoremstyle{remark}
\newtheorem{remark}{Remark}

\newcommand{\email}[1]{\href{mailto:#1}{#1}}
\newenvironment{keywords}{\par\medskip\noindent\textbf{Keywords.}}{\par}
\newenvironment{AMS}{\par\medskip\noindent\textbf{MSC codes.}}{\par}

\crefname{theorem}{Theorem}{Theorems}
\Crefname{theorem}{Theorem}{Theorems}
\crefname{lemma}{Lemma}{Lemmas}
\Crefname{lemma}{Lemma}{Lemmas}
\crefname{corollary}{Corollary}{Corollaries}
\Crefname{corollary}{Corollary}{Corollaries}
\crefname{proposition}{Proposition}{Propositions}
\Crefname{proposition}{Proposition}{Propositions}
\crefname{definition}{Definition}{Definitions}
\Crefname{definition}{Definition}{Definitions}
\crefname{assumption}{Assumption}{Assumptions}
\Crefname{assumption}{Assumption}{Assumptions}
\crefname{remark}{Remark}{Remarks}
\Crefname{remark}{Remark}{Remarks}
\crefname{example}{Example}{Examples}
\Crefname{example}{Example}{Examples}

\newcommand{\R}{\mathbb{R}}

\newcommand{\dx}{\,dx}
\newcommand{\ds}{\,ds}
\newcommand{\ECR}{\mathrm{ECR}}

\newcommand{\CG}{\mathrm{CG}}

\DeclareMathOperator{\diam}{diam}

\newcommand{\jump}[1]{\llbracket #1 \rrbracket}

\title{Explicit Two-Sided Eigenvalue Bounds for Schr\"{o}dinger
       Operators with Singular Potentials via Finite Element Method}

\author{%
  Xuefeng Liu%
  \thanks{Tokyo Woman's Christian University (\email{xfliu@lab.twcu.ac.jp}).}%
}

\begin{document}
\maketitle

\begin{abstract}
We present, to the best of our knowledge, the first numerical
algorithm for \emph{explicit, computable} two-sided eigenvalue
bounds for Schr\"odinger operators $H = -\Delta + V$ on $\mathbb{R}^N$
($N=2,3$) in the presence of both an \emph{unbounded potential}
and an \emph{unbounded domain}. ``Explicit'' here means that all
constants and ingredients are derived in closed form from the mesh,
the potential, and a small set of explicit inequalities
(Payne--Weinberger, Hardy, and explicit bounded-domain Sobolev
embeddings); the conversion to fully \emph{verified}
(IEEE-754-safe, interval-arithmetic) enclosures is a separate
verification step and is left for future work.
In particular, singular attractive potentials of Coulomb type,
$V(x)=-Z/|x|$, which model the hydrogen atom and the $\mathrm{H}_2^+$
molecular ion, are covered by the theory.
The method combines domain truncation to a bounded domain $D(R)\supset\{|x|\le R\}$
with an extension of Liu's Composite Enriched Crouzeix--Raviart
(CECR) finite element method to sign-indefinite potentials. Upper
bounds come from the standard conforming Galerkin method; lower
bounds come from the CECR construction, whose gap to the exact
eigenvalue closes as the mesh is refined.
Numerical experiments on the 2D single- and two-centred Coulomb
potentials and on the 3D hydrogen atom and $\mathrm{H}_2^+$
molecular ion illustrate the algorithm and confirm the predicted
convergence.
\end{abstract}

\begin{keywords}
explicit eigenvalue bounds, Schr\"odinger operator, Coulomb singularity,
CECR finite element method, Crouzeix--Raviart element,
fixed shift, form-norm CECR constant, two-sided enclosure, singular potential
\end{keywords}

\begin{AMS}
65N25, 65N30, 35P15, 35J10, 81Q05
\end{AMS}

\section{Introduction}
\label{sec:intro}

Computing explicit two-sided bounds on eigenvalues
of the Schr\"odinger operator
\begin{equation}
\label{eq:schrodinger}
  H = -\Delta + V, \qquad \text{on } \R^N,\; N=2,3,
\end{equation}
arises in quantum chemistry, mathematical physics, and the numerical
analysis of partial differential equations.  Upper bounds on the $k$-th
eigenvalue $\lambda_k$ are readily obtained by the Rayleigh--Ritz
(Galerkin) method: any conforming trial space $V_h \subset H^1_0(\Omega)$
yields $\lambda_k \le \lambda_{k,h}^{\CG}$.  Obtaining a
\emph{explicit lower bound} requires non-conforming or specialised
methods, since the variational principle provides only upper bounds.
This paper develops such lower bounds for the physically important
class of Coulomb-type potentials.

\subsection*{Background and Related Work}

\paragraph{Lower bound methods}
Classical methods for eigenvalue lower bounds include
the Lehmann--Maehly--Goerisch variational method
\cite{Goerisch1986}, the Kato--Temple inequality,
Behnke's complementary variational approach \cite{Behnke2000},
and Plum's computer-assisted enclosure method \cite{Plum1991}.
In the regime of obtaining explicit eigenvalue bounds using finite
element methods, Liu \cite{LiuOis2011,LiuOis2013,Liu2015}
introduced the projection-based eigenvalue bound as follows:
\begin{equation}
\label{eq:cecr-lb-intro}
  \frac{\lambda_{k,h}}{1 + \lambda_{k,h} C_h^2} \;\le\; \lambda_k,
\end{equation}
where $\lambda_{k,h}$ is a finite element approximation to  eigenvalue and
$C_h = O(h^\alpha)$ is an explicit error constant.
Related work on explicit eigenvalue bounds was initiated
by the early work of Kikuchi and Liu \cite{KikuchiLiu2007,LiuKikuchi}
and Kobayashi \cite{Kobayashi2011,Kobayashi2015} on explicit
interpolation error constants.  See also later work including
Carstensen et al.\
\cite{CarstensenGedicke2014,CarstensenGallistl2014},
Hu, Huang and Ma \cite{HuHuangMa2016},
Canc\`es et al.\ \cite{CancesDusson2017},
and You, Xie and Liu \cite{YouXieLiu2019} (Steklov eigenvalues).
To the author’s knowledge, no existing finite element framework provides explicit and convergent lower bounds for eigenvalues associated with such unbounded singular potentials. This constitutes the central difficulty addressed in the present work, which is overcome by employing the Composite Enriched Crouzeix–Raviart (CECR) finite element method recently proposed in the author’s book \cite[\S 4.1.3]{liu2024guaranteed}.
.

\paragraph{The coercivity gap}
The existing CECR application assumes $V \ge 0$:
this ensures coercivity of the bilinear form
$a(u,u) = \|\nabla u\|^2 + (Vu,u) \ge \|\nabla u\|^2$,
a step used in the proof of \eqref{eq:cecr-lb-intro}.
Physically important potentials—Coulomb ($V=-Z/r$),
two-centered Coulomb ($V=-Z_1/|x-a_1|-Z_2/|x-a_2|$)
as in the $\mathrm{H}_2^+$ molecular ion—are purely attractive ($V<0$).
For these, $a(u,u)$ may be negative, and \eqref{eq:cecr-lb-intro}
breaks down.  A natural first attempt replaces $V$ by its piecewise
constant element average $c_h$ and shifts by
$\gamma_h=\|c_h^-\|_{L^\infty}$, but $\gamma_h$ diverges as
the mesh is refined for Coulomb singularities and the bound fails to converge
on uniform or radially graded meshes.
This paper develops a \emph{pair-space} framework and introduces
an \emph{elementwise reaction constant}
$\Gamma_h:=\max_K c_h^-|_K\cdot h_K^{\,2}/\pi^2$
that replaces the divergent $\gamma_h$ by a vanishing quantity
$\Gamma_h\to 0$ under refinement. The resulting CECR lower bound
(\Cref{thm:cecr-lb-convergent}) is fully computable; the
CECR gap closes at rate $O(h^2)$ in the mesh-family parameter on
graded meshes in both 2D and 3D.
With the patch radius chosen according to the direct estimates in
\Cref{app:graded-eps}, the final certified bound $L_k\le\lambda_k$
inherits the same $O(h^2)$ rate.
This provides the first explicit convergent eigenvalue lower bound
for Coulomb-type potentials via non-conforming finite elements.

\paragraph{Domain truncation}
For operators on $\R^N$, a standard strategy truncates to a bounded domain
$D(R)\supset\{|x|\le R\}$ with Dirichlet or Neumann boundary conditions.
Dirichlet truncation gives upper bounds \cite{Reed-Simon};
Neumann truncation gives lower bounds directly under an explicit
confinement condition on the exterior potential minimum
$\sigma_{\mathrm{ext}}(D(R)) := \inf_{\R^N\setminus D(R)} V > \lambda_k$,
without requiring the CECR machinery.
The truncation error decays exponentially in $R$ for potentials
with exponentially decaying eigenfunctions \cite{Agmon1982}.
We adopt both boundary conditions and compare.

\paragraph{Reference values for the
\texorpdfstring{$\mathrm{H}_2^+$}{H2+} molecular ion}
The electronic eigenvalue problem for $\mathrm{H}_2^+$ (clamped
nuclei, excluding proton--proton repulsion) has served as a
benchmark for quantum chemistry since the 1950s.
High-accuracy two-centre wavefunctions and eigenvalues were
obtained by Bates, Ledsham and Stewart \cite{Bates1953} via a
variational expansion in prolate spheroidal coordinates; Peek
\cite{Peek1965} tabulated the eigenparameters for the ground
($1s\sigma_g$) and first excited ($2p\sigma_u$) orbitals over
the dense grid of internuclear distances
$R = 1.0\,(0.5)\,30.0\,a_0$ (so $R=2\,a_0$ and its multiples
are included), and Madsen and Peek \cite{MadsenPeek1971}
extended such tabulations to the lowest twenty electronic
states.  More recently, Olivares-Pil\'on and Turbiner
\cite{OlivaresTurbiner2016} reported Lagrange-mesh computations
for the low-lying states that agree with alternative
high-precision calculations to about ten significant digits.
These classical and modern references furnish the reference
values against which we benchmark the explicit two-sided bounds
in \Cref{sec:numerics}. They do not, however, provide an
\emph{explicit} lower bound together with an a-priori convergence
rate, which is the gap addressed here.

\subsection*{Contributions of this Paper}

This paper establishes the following contributions.

\begin{enumerate}[label=(\roman*)]

\item \textbf{Pair-space framework.}
We lift the CECR analysis to a two-component pair space
$\widehat V$ on which the extended bilinear forms $\widehat a$,
$\widehat b$ encode the kinetic and reaction contributions
separately (\Cref{subsec:cecr-pair-space}).  An abstract lower-bound
theorem (\Cref{thm:cecr-abstract}) is proved in this framework with a
self-contained max-min argument.
\item \textbf{Fixed-shift two-sided bound.}
We introduce a single, mesh-independent shift $\sigma>0$ that
renders the shifted CECR form $a_h^\sigma$ positive definite on
$V_h^{\ECR}$, prove a perturbation
lemma relating $\lambda_k$ of $-\Delta+V$ to $\mu_k$ of $-\Delta+c_h$
without $L^\infty$ control of $c_h^-$ (\Cref{lem:perturb-sigma}),
and assemble a two-sided bound \Cref{thm:main}.

\item \textbf{Two-centered Coulomb potential.}
We specialize the theory to $V = -Z_1/|x-a_1| - Z_2/|x-a_2|$
(model for $\mathrm{H}_2^+$ and related systems) and provide explicit
formulas for the admissible shift $\sigma$ and the perturbation
parameter $\varepsilon_h$ in both 2D and 3D
(\Cref{sec:twocenter}).

\item \textbf{Dirichlet and Neumann truncation.}
We analyze both Dirichlet and Neumann boundary conditions on the
truncated domain $D(R)$.  Dirichlet gives an upper bound; Neumann
gives a direct (CECR-free) lower bound under the explicit
confinement condition $\sigma_{\mathrm{ext}}(D(R)) > \lambda_k$
(\Cref{lem:neumann-lb}), which is verifiable from a computed
upper bound on $\lambda_k$.

\end{enumerate}

\subsection*{Organization}

\Cref{sec:setting} introduces the operator, truncation domain, and
boundary conditions.
\Cref{sec:cecr} develops the CECR method for piecewise constant
potentials and proves the shift corollary.
\Cref{sec:extension} extends the theory to unbounded potentials.
\Cref{sec:twocenter} treats the two-center Coulomb potential.
\Cref{sec:numerics} presents numerical experiments.
\Cref{sec:conclusion} concludes.

\subsection*{Notation}

$D(R)$ denotes a bounded Lipschitz domain in $\R^N$ satisfying
$\{|x|\le R\}\subseteq D(R)$; we call $R$ the \emph{truncation radius}.
In the numerical experiments $D(R)$ is taken to be a box
(rectangle for $N=2$, rectangular cuboid for $N=3$) or a ball,
depending on the test problem.
$H^1_0(\Omega)$ is the Sobolev space with zero trace on $\partial\Omega$.
$L^p(\Omega)$ is equipped with the norm $\|\cdot\|_{L^p}$.
For a triangulation $\mathcal{T}^h$, $h_K = \diam(K)$ and
$h_{\max}:=\max_K h_K$.  The symbol $h$ in $\mathcal{T}^h$ and in the
numerical tables denotes the mesh-family parameter used to label a
refinement level; on graded meshes it need not equal $h_{\max}$.
$\lambda_1^D(\Omega)$ is the first Dirichlet eigenvalue of $-\Delta$
on $\Omega$.
$C \lesssim D$ means $C \le c\, D$ for a constant $c$ depending only on the domain $\Omega$ and the dimension $N$.

\section{Problem Setting and Domain Truncation}
\label{sec:setting}

\subsection{The Schrödinger Eigenvalue Problem}
\label{subsec:schrodinger}

Let $N \in \{2,3\}$ and let $V:\R^N\to\R$ be a measurable function
with $V = V^+ - V^-$ where $V^\pm := \max(\pm V,0) \ge 0$.
We consider the eigenvalue problem
\begin{equation}
\label{eq:evp-full}
  (-\Delta + V)\,u = \lambda\, u \quad \text{in } \R^N,
  \qquad u \in H^1(\R^N),
\end{equation}
in the sense of quadratic forms: find $\lambda \in \R$ and
$u \in H^1(\R^N) \setminus \{0\}$ such that
\begin{equation}
\label{eq:weak-full}
  \int_{\R^N} \nabla u \cdot \nabla v + V\,uv \dx
  = \lambda \int_{\R^N} uv \dx
  \quad \forall v \in H^1(\R^N).
\end{equation}

\begin{assumption}[Standing assumptions on $V$]
\label{ass:V}
Throughout the paper we assume:
\begin{enumerate}[label=(\roman*)]
  \item $V^+ \in L^1_{\rm loc}(\R^N)$;
  \item $V^- \in L^{p_0}_{\rm loc}(\R^N)$ for some $p_0 > N/2$,
        with $V^-(x)\to 0$ as $|x|\to\infty$ (no growth at infinity);
  \item The operator $H=-\Delta+V$ is self-adjoint and bounded below on
        $\R^N$ in the form-sum sense; this is guaranteed by the
        KLMN/Kato--Rellich framework whenever $V^-$ is locally
        $L^{p_0}$ with $p_0>N/2$ \emph{and} is form-bounded with
        relative bound $<1$, see
        \Cref{ass:form-bounded} and \cite[Theorem~X.17]{Reed-Simon}.
\end{enumerate}
\end{assumption}

\begin{remark}[Why local rather than global integrability]
\label{rmk:Lploc-vs-Lp}
The Coulomb tail $V^-(x)\sim Z/|x|$ is \emph{not} in $L^{p_0}(\R^N)$
for any single $p_0>N/2$: integrability at the origin requires
$p_0<N$, while integrability at infinity requires $p_0>N$, two
incompatible conditions. We therefore impose only local
$L^{p_0}$-integrability on $\R^N$, supplemented by decay at infinity;
this suffices for self-adjointness and discrete spectrum below
$\inf\sigma_{\rm ess}(H)$ for Coulomb-type $V$ via the form-bound
of \Cref{ass:form-bounded} (Hardy in $N=3$,
Gagliardo--Nirenberg in $N=2$). \emph{Global} $L^{p_0}$-integrability
is invoked only after truncation, where it reduces to
$V\in L^{p_0}(D(R))$ on the bounded domain $D(R)$.
\end{remark}

The Coulomb-type potentials treated in this paper satisfy
\Cref{ass:V}: the single-centre potential $V^-=Z/|x-a|$ belongs to
$L^{p_0}(D(R))$ for every $p_0<N$ (since
$\int_{D(R)}|x-a|^{-p_0}\dx<\infty$ when $p_0<N$), so in particular
for any $p_0\in(N/2,\,N)$; the two-centre potential
$V^-=Z_1/|x-a_1|+Z_2/|x-a_2|$ inherits the same local integrability.
Concretely, we take $p_0=4/3$ when $N=2$ and $p_0=3/2+\delta$
(any small $\delta>0$) when $N=3$. After truncation to $D(R)$,
$V\in L^{p_0}(D(R))$ holds and is what enters the perturbation
analysis of \Cref{sec:extension}.

Under \Cref{ass:V}, the operator $H$ has a discrete spectrum
$\lambda_1 \le \lambda_2 \le \cdots$ below
$\inf\sigma_{\rm ess}(H)$; see, e.g., \cite{persson1960bounds}.
We study the problem of computing explicit two-sided bounds on
$\lambda_k$ for $k = 1, 2, \ldots$.

\subsection{Truncation to a Bounded Domain}
\label{subsec:truncation}

Since $\R^N$ is unbounded, we truncate to a bounded domain
$D(R)\supset\{|x|\le R\}$ and impose boundary conditions on
$\partial D(R)$. We choose it large enough that the computed discrete eigenvalues
yield a valid \emph{lower bound for the ground state}
$\lambda_1$, i.e.\ so that (i) the confinement condition
$\sigma_{\mathrm{ext}}(D(R))>\lambda_1$ of \Cref{lem:neumann-lb}
holds (see \Cref{rmk:confinement-verify}), and (ii) the
truncation error $\delta_1(R)$ is negligible relative to the
discretisation error. Higher eigenvalues $\lambda_k$, $k\ge 2$,
are reported but may not be covered by the Neumann-truncation
lower-bound machinery at the chosen $R$.
The domains used in \S\ref{sec:numerics} are listed in
\Cref{tab:confinement-check}; they contain the relevant nuclei
and are large enough for the ground-state confinement check.

\subsubsection*{Dirichlet Truncation}

Let $\lambda_k^{R,D}$ denote the $k$-th eigenvalue of
$-\Delta + V$ on $D(R)$ with Dirichlet boundary condition
$u = 0$ on $\partial D(R)$.  By the min-max principle and domain
monotonicity (restricting to $H^1_0(D(R)) \subset H^1(\R^N)$):
\begin{equation}
\label{eq:dirichlet-upper}
  \lambda_k \;\le\; \lambda_k^{R,D}.
\end{equation}
Thus $\lambda_k^{R,D}$ is a computable \emph{upper bound} for $\lambda_k$.

\subsubsection*{Neumann Truncation}

Let $\lambda_k^{R,N}$ denote the $k$-th eigenvalue of $-\Delta+V$
on $D(R)$ with Neumann boundary condition
$\partial u/\partial\bm{n} = 0$ on $\partial D(R)$.
Define the \emph{exterior potential minimum}
\begin{equation}
\label{eq:sigma-R}
  \sigma_{\mathrm{ext}}(D(R)) \;:=\; \inf_{x\in\R^N\setminus D(R)}\, V(x).
\end{equation}
For the Coulomb-type potentials considered in this paper, all
singularities $a_i$ lie inside $D(R)$, so $V$ is bounded below
on $\R^N\setminus D(R)$ and $\sigma_{\mathrm{ext}}(D(R)) > -\infty$.
Explicitly: $\sigma_{\mathrm{ext}}(D(R)) = -Z/R$ for the single
Coulomb potential on a ball of radius $R$, while for a general
box domain the value in \eqref{eq:sigma-R} is obtained by minimizing
the explicit Coulomb expression over the boundary faces.

The Neumann lower bound below is a multidimensional Coulomb
specialisation of the classical Dirichlet--Neumann bracketing
principle for self-adjoint operators (see~\cite[Sec.~XIII.15]{Reed-Simon}),
in which the criterion takes the explicit
\emph{exterior-potential minimum} form \eqref{eq:confinement}. An
analogous Neumann-truncation lower bound under explicit checkable
conditions has been used in the one-dimensional Sturm--Liouville
literature for turning-point-type problems. The following lemma gives
the corresponding lower eigenvalue bound for the present setting.

\begin{lemma}[Neumann truncation lower bound]
\label{lem:neumann-lb}
Assume that $H = -\Delta + V$ on $\R^N$ has at least $k$ discrete
eigenvalues $\lambda_1 \le \cdots \le \lambda_k$ below
$\inf\sigma_{\rm ess}(H)$, with orthonormal eigenfunctions
$\psi_1,\ldots,\psi_k$.  If the confinement condition
\begin{equation}
\label{eq:confinement}
  \sigma_{\mathrm{ext}}(D(R)) \;>\; \lambda_k
\end{equation}
holds, then
\begin{equation}
\label{eq:neumann-lower}
  \lambda_k^{R,N} \;\le\; \lambda_k.
\end{equation}
\end{lemma}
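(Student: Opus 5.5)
The plan is to use Dirichlet--Neumann bracketing via a decoupled quadratic form, and then to use the confinement condition \eqref{eq:confinement} to show that the exterior piece cannot contribute any of the lowest $k$ levels.

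Write $E:=\R^N\setminus \overline{D(R)}$ and let $q(u)=\int_{\R^N}|\nabla u|^2+V u^2\dx$ be the closed, lower-bounded form of $H$ on its form domain $\mathcal Q\subset H^1(\R^N)$ (\Cref{ass:V}(iii)). Define the decoupled form on
$$\widetilde{\mathcal Q}:=\{(u_1,u_2)\in H^1(D(R))\times H^1(E):\ \text{the terms } \textstyle\int_{D(R)} V^+u_1^2\dx,\ \int_E V^+u_2^2\dx \text{ are finite}\}$$
by
$$\tilde q(u_1,u_2)=\int_{D(R)}|\nabla u_1|^2+Vu_1^2\dx+\int_E|\nabla u_2|^2+Vu_2^2\dx .$$
It is the form of $H^{R,N}\oplus H^{E,N}$, the Neumann realisations on $D(R)$ and on $E$. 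The map $u\mapsto(u|_{D(R)},u|_E)$ embeds $\mathcal Q$ isometrically in $L^2$ into $\widetilde{\mathcal Q}$, and it satisfies $\tilde q=q$ on the image; the interface $\partial D(R)$ has measure zero, so it does not affect the integrals. The min--max principle applied to the trial space $\mathrm{span}\{\psi_1,\dots,\psi_k\}$ then gives $\tilde\mu_k\le\lambda_k$. Here $\tilde\mu_k$ is the $k$-th min--max value of $\tilde q$, namely
$$\tilde\mu_k:=\inf_{\substack{W\subset\widetilde{\mathcal Q}\\ \dim W=k}}\ \max_{0\ne w\in W}\frac{\tilde q(w)}{\|w\|^2}.$$

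Next I would bound the exterior part from below. On $E$ we have $V\ge\sigma_{\mathrm{ext}}(D(R))$ pointwise, by \eqref{eq:sigma-R}, and $|\nabla u_2|^2\ge0$. Hence the exterior form is $\ge\sigma_{\mathrm{ext}}\|u_2\|^2_{L^2(E)}$, i.e.\ $\inf\sigma(H^{E,N})\ge\sigma_{\mathrm{ext}}$. For the interior part, I would argue that $H^{R,N}$ has purely discrete spectrum: $D(R)$ is bounded and Lipschitz, so $H^1(D(R))\hookrightarrow L^2$ compactly. Moreover $V^-\in L^{p_0}(D(R))$ with $p_0>N/2$ is infinitesimally form-bounded with respect to the $H^1(D(R))$ norm, by Sobolev/H\"older together with the explicit bounded-domain embeddings mentioned in the abstract. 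Adding $V^+\ge0$ only shrinks the form domain while keeping the form lower-bounded, so the resolvent remains compact. Therefore $\sigma(H^{R,N}\oplus H^{E,N})=\{\lambda_j^{R,N}\}_j\cup\sigma(H^{E,N})$. Its part below $\sigma_{\mathrm{ext}}$ consists only of the eigenvalues $\lambda_j^{R,N}$, and $\sigma_{\mathrm{ess}}\cap(-\infty,\sigma_{\mathrm{ext}})=\emptyset$.

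To conclude, note that $\tilde\mu_k\le\lambda_k<\sigma_{\mathrm{ext}}$ by \eqref{eq:confinement}. By the min--max theorem, every min--max value strictly below the bottom of the essential spectrum is an eigenvalue, counted with multiplicity. Hence $\tilde\mu_1\le\dots\le\tilde\mu_k$ are the $k$ lowest eigenvalues of the direct sum. All of them lie below $\sigma_{\mathrm{ext}}\le\inf\sigma(H^{E,N})$, so they all belong to $H^{R,N}$. This gives $\tilde\mu_k=\lambda_k^{R,N}$, and therefore $\lambda_k^{R,N}\le\lambda_k$, which is \eqref{eq:neumann-lower}.

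The main obstacle is the functional-analytic bookkeeping rather than the idea. One needs the decoupled form $\tilde q$ to be closed and lower-bounded, so that the min--max values are genuinely the spectral quantities of $H^{R,N}\oplus H^{E,N}$. One also needs $H^{R,N}$ to have compact resolvent when $V^+$ is merely $L^1_{\rm loc}$ and $V^-\in L^{p_0}$. I would first handle this by the KLMN theorem: treat $V^-$ as a small form perturbation and $V^+$ as a positive closed form added on. I would take care that the identification of min--max values with eigenvalues is used only strictly below $\sigma_{\mathrm{ext}}$, which is exactly where the confinement condition places $\tilde\mu_k$.
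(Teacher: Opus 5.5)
Your proof is correct, but it follows a different route from the paper. You use the textbook decoupling form of Dirichlet--Neumann bracketing; the paper gives a direct test-space argument. The paper never builds an exterior operator. It restricts $U_k=\mathrm{span}\{\psi_1,\dots,\psi_k\}$ to $D(R)$ and uses the restrictions as trial functions in the Neumann min--max on $H^1(D(R))$. Confinement is used twice there. First, it makes restriction injective on $U_k$: a nonzero $u\in U_k$ vanishing on $D(R)$ would give $\lambda_k\|u\|^2\ge a(u,u)\ge\sigma_{\mathrm{ext}}(D(R))\|u\|^2$. Second, it absorbs the discarded exterior energy, via $a_{D(R)}(v,v)\le(\lambda_k-\sigma_{\mathrm{ext}})\|u\|^2+\sigma_{\mathrm{ext}}\|v\|^2\le\lambda_k\|v\|^2$ for $v=u|_{D(R)}$, using $\|v\|\le\|u\|$ and $\lambda_k-\sigma_{\mathrm{ext}}<0$.

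In your version injectivity is automatic, since the decoupling map is an $L^2$ isometry, and the comparison $\tilde\mu_k\le\lambda_k$ needs no hypothesis. Confinement enters only once, as a spectral separation guaranteeing that the $k$ lowest levels of $H^{R,N}\oplus H^{E,N}$ are interior eigenvalues with the correct multiplicities. The cost is the bookkeeping you flagged yourself: closedness of the decoupled forms, the bound $H^{E,N}\ge\sigma_{\mathrm{ext}}(D(R))$, compact resolvent of $H^{R,N}$ (Rellich on the Lipschitz domain plus KLMN for $V^-\in L^{p_0}$), and the min--max identification below the essential spectrum of the direct sum. The paper needs only the variational characterization of $\lambda_k^{R,N}$ on $H^1(D(R))$, and takes the discreteness of that spectrum for granted.

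In return, your argument actually justifies that discreteness. It also makes clear that all that is used is $\inf\sigma(H^{E,N})>\lambda_k$. This hypothesis is implied by the confinement condition but is in general less restrictive, since $\inf\sigma(H^{E,N})$ can lie strictly above $\sigma_{\mathrm{ext}}(D(R))$.
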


\begin{proof}
Let $U_k := \mathrm{span}\{\psi_1,\ldots,\psi_k\}$ and define the
restriction map $T\colon U_k \to H^1(D(R))$, $Tu = u|_{D(R)}$.

\emph{Injectivity.}  If $u \in U_k$ satisfies $u|_{D(R)} = 0$,
then $u$ is supported on $\R^N \setminus D(R)$, and
\[
  \lambda_k\|u\|_{L^2(\R^N)}^2
  \;\ge\; a(u,u)
  \;=\; \int_{\R^N\setminus D(R)}\!\bigl(|\nabla u|^2+V|u|^2\bigr)\dx
  \;\ge\; \sigma_{\mathrm{ext}}(D(R))\|u\|_{L^2(\R^N)}^2,
\]
which forces $u = 0$ because $\sigma_{\mathrm{ext}}(D(R)) > \lambda_k$.
Hence $W := T(U_k)$ is a $k$-dimensional subspace of $H^1(D(R))$,
admissible in the Neumann min-max formula.

\emph{Rayleigh quotient bound.}  For $u \in U_k$ and
$v := u|_{D(R)} \in W$, splitting
$\R^N = D(R) \sqcup (\R^N\setminus D(R))$ gives
\begin{align*}
  a_{D(R)}(v,v)
  &= a(u,u) - \int_{\R^N\setminus D(R)}\!
     \bigl(|\nabla u|^2+V|u|^2\bigr)\dx \\
  &\le a(u,u)
     - \sigma_{\mathrm{ext}}(D(R))\bigl(\|u\|_{L^2(\R^N)}^2
     - \|v\|_{L^2(D(R))}^2\bigr).
\end{align*}
Writing $u = \sum_{j=1}^k c_j\psi_j$ and using orthonormality,
$$
 a(u,u) = \sum_{j=1}^k \lambda_j|c_j|^2
\le \lambda_k\|u\|_{L^2(\R^N)}^2\;.
$$
Since $\lambda_k - \sigma_{\mathrm{ext}}(D(R)) < 0$ and
$\|u\|_{L^2(\R^N)}^2 \ge \|v\|_{L^2(D(R))}^2$,
\[
  a_{D(R)}(v,v)
  \le \bigl(\lambda_k - \sigma_{\mathrm{ext}}(D(R))\bigr)
          \|u\|_{L^2(\R^N)}^2
        + \sigma_{\mathrm{ext}}(D(R))\|v\|_{L^2(D(R))}^2
  \le \lambda_k\|v\|_{L^2(D(R))}^2.
\]
Every Rayleigh quotient on $W$ is at most $\lambda_k$; the
min-max principle yields $\lambda_k^{R,N} \le \lambda_k$.
\end{proof}

\begin{remark}[Verifying the confinement condition for
\texorpdfstring{$\lambda_1$}{lambda\_1}]
\label{rmk:confinement-verify}
The ground-state eigenvalue $\lambda_1$ is not known a priori,
but \eqref{eq:confinement} can be verified a posteriori from
any computed upper bound $\bar\lambda_1\ge\lambda_1$---for
example, the conforming Galerkin eigenvalue
$\lambda_{1,h}^{\CG}$, or an accurate reference value from the
literature---since $\sigma_{\mathrm{ext}}(D(R))>\bar\lambda_1$
implies $\sigma_{\mathrm{ext}}(D(R))>\lambda_1$. The closed-form
threshold for ball truncations is
\begin{equation}
\label{eq:R-min}
  R_{\min,1} \;=\;
  \begin{cases}
    Z/|\lambda_1| & \text{(single-centre } V=-Z/|x|\text{)}, \\
    A+Z/|\lambda_1| &
      \text{(two-centre, } A:=\max_i|a_i|,\;Z:=Z_1+Z_2\text{)}.
  \end{cases}
\end{equation}
\Cref{tab:confinement-check} reports, for each test problem,
the chosen domain, the exterior-potential minimum
$\sigma_{\mathrm{ext}}(D(R))$, and the upper-bound surrogate
$\bar\lambda_1$ (source indicated in the footnote). In every
row $\sigma_{\mathrm{ext}}(D(R))>\bar\lambda_1$, so
\eqref{eq:confinement} is verified for $\lambda_1$ on each
problem.

\begin{table}[htbp]
\centering\small
\renewcommand{\arraystretch}{1.2}
\caption{Verification of the confinement condition
$\sigma_{\mathrm{ext}}(D(R))>\lambda_1$ for the four test problems
of \S\ref{sec:numerics}. The exterior minimum is computed for the
actual domain used in each experiment, so the Neumann-truncation
lower bound of \Cref{lem:neumann-lb} applies to the ground state.}
\label{tab:confinement-check}
\begin{tabular}{lccc}
\toprule
Model & Domain & $\sigma_{\mathrm{ext}}(D(R))$
      & $\bar\lambda_1{}^{\dagger}$ \\
\midrule
2D hydrogen ($V=-1/|x|$) & $[-5,5]^2$ & $-0.2000$ & $-1$      \\
2D $\mathrm{H}_2^+$ ($a_{1,2}=(\mp 2,0)$) & $[-7,7]\!\times\![-5,5]$ & $-0.3713$ & $-1.3170$ \\
3D hydrogen ($V=-1/|x|$) & $B(0,6)$ & $-0.2000$ & $-0.250$ \\
3D $\mathrm{H}_2^+$ ($a_{1,2}=(\mp 2,0,0)$) & $[-8,8]\!\times\![-6,6]^2$ & $-0.3713$ & $-0.5303$ \\
\bottomrule
\end{tabular}
\\[0.4em]
{\footnotesize $^{\dagger}$Sources for $\bar\lambda_1$:
2D hydrogen, exact $\lambda_1=-1$; 2D $\mathrm{H}_2^+$,
$\lambda_{1,h}^{P_1}=-1.3170$ at $h=0.2$ (\Cref{tab:twocenter-2d-cecr});
3D hydrogen, exact $\lambda_1=-0.250$; 3D $\mathrm{H}_2^+$,
$\lambda_{1,h}^{P_1}=-0.5303$ at $h=1.0$ (\Cref{tab:twocenter-3d-cecr}).}
\end{table}
\end{remark}

\begin{remark}[Why we do not use boundary flux signs]
\label{rmk:boundary-sign}
For a single eigenfunction $\psi_j$, Green's identity on $D(R)$
gives
\[
  a_{D(R)}(\psi_j,\psi_j)
  =
  \lambda_j\|\psi_j\|_{L^2(D(R))}^2
  +
  \int_{\partial D(R)}\psi_j\,\partial_{\bm n}\psi_j\,ds .
\]
Thus a nonpositive boundary flux would imply that the Neumann
Rayleigh quotient of $\psi_j|_{D(R)}$ is at most $\lambda_j$.
This observation can be useful in special situations, for example
for explicitly radial ground states.
However, it is not a robust proof of \eqref{eq:neumann-lower}.
Agmon estimates \cite{Agmon1982} give exponential decay, but they
do not by themselves imply a pointwise sign or monotonicity
condition for $\psi_j$ on large spheres; excited states may have
nodal sets reaching infinity.  Moreover, for the $k$-th min--max
argument one needs a Rayleigh quotient bound on every linear
combination in $\mathrm{span}\{\psi_1,\ldots,\psi_k\}$,
equivalently a sign condition on the corresponding boundary flux
quadratic form, not merely on the individual eigenfunctions.
\Cref{lem:neumann-lb} avoids these unverifiable boundary sign
assumptions and replaces them with the explicit, computable
criterion \eqref{eq:confinement}.
\end{remark}

\begin{remark}
The two truncated problems satisfy
$\lambda_k^{R,N} \le \lambda_k \le \lambda_k^{R,D}$
(under the confinement condition \eqref{eq:confinement}),
providing a bracket whose width $\lambda_k^{R,D} - \lambda_k^{R,N}$
decreases as $R \to \infty$.
\end{remark}

\subsection{Truncation Error}
\label{subsec:truncation-error}

The gap $\delta_k(R) := \lambda_k^{R,D} - \lambda_k \ge 0$
measures the error introduced by replacing $\R^N$ with $D(R)$.
A detailed derivation of explicit upper bounds for $\delta_k(R)$
is deferred to future work; here we collect the qualitative
picture sufficient to guide the choice of $R$ in practice.

\paragraph{Exponential decay of eigenfunctions}
By Agmon estimates \cite{Agmon1982}, eigenfunctions of $H = -\Delta+V$
decay exponentially outside any compact set, provided $V$ grows or
stays bounded.  More precisely:
\begin{itemize}
  \item \emph{Confining potential} ($V(x)\to+\infty$ as $|x|\to\infty$):
    the $k$-th eigenfunction satisfies
    $u_k(x) = O(e^{-\alpha_k|x|})$ for some $\alpha_k > 0$ depending
    on $\lambda_k$ and the growth rate of $V$.
  \item \emph{Attractive Coulomb} ($V = -Z/|x|$, $\lambda_k < 0$):
    eigenfunctions decay with rate $\alpha_k = \sqrt{|\lambda_k|}$
    (known exactly for the hydrogen atom \cite{Reed-Simon}).
  \item \emph{Two-centered Coulomb} ($V = -Z_1/|x-a_1|-Z_2/|x-a_2|$):
    the same Agmon theory applies with $\alpha_k = \sqrt{|\lambda_k|}$,
    since $V(x)\to 0$ as $|x|\to\infty$ and $\lambda_k < 0$.
\end{itemize}

\paragraph{Consequence for \texorpdfstring{$\delta_k(R)$}{delta\_k(R)}}
The exponential decay of $u_k$ implies that $\delta_k(R)$ is
exponentially small in $R$: schematically,
$\delta_k(R) \sim e^{-2\alpha_k R}$ as $R\to\infty$.
In practice, $R$ is chosen large enough that $\delta_k(R)$ is
negligible compared to the CECR approximation gap $U_k - L_k$.

\begin{remark}
An explicit, computable upper bound for $\delta_k(R)$—taking into
account the constants in the Agmon estimate—is an important
complement to the CECR framework developed here, and is left
as a topic for a companion paper.
\end{remark}

\subsection{Two-Sided Truncation Sandwich}
\label{subsec:reduction}

The Dirichlet and Neumann truncated eigenvalues together provide a
two-sided sandwich for the full-space eigenvalue $\lambda_k$.

\paragraph{Upper bound via Dirichlet truncation.}
By \eqref{eq:dirichlet-upper}, $\lambda_k\le\lambda_k^{R,D}$.
A conforming $P_1$ Galerkin discretisation of the Dirichlet problem
\begin{equation}
\label{eq:evp-dirichlet}
  (-\Delta + V)\,u = \lambda\, u \;\text{ on } D(R),
  \quad u\in H^1_0(D(R)),
\end{equation}
with bilinear forms
$a(u,v):=(\nabla u,\nabla v)_{D(R)}+(Vu,v)_{D(R)}$
and $b(u,v):=(u,v)_{D(R)}$,
yields Ritz values $\lambda_{k,h}^{\CG}\ge\lambda_k^{R,D}\ge\lambda_k$:
\begin{equation}
\label{eq:two-sided-ub}
  \lambda_k \;\le\; \lambda_k^{R,D} \;\le\; \lambda_{k,h}^{\CG}.
\end{equation}

\paragraph{Lower bound via Neumann truncation and CECR.}
By \eqref{eq:neumann-lower}, $\lambda_k^{R,N}\le\lambda_k$.
The CECR framework of \S\S\ref{sec:cecr}--\ref{sec:extension}
delivers a computable lower bound $L_k\le\lambda_k^{R,N}$
(via the piecewise-constant approximation $c_h\approx V$ and
the perturbation lemma of \Cref{sec:extension});
combining these gives $L_k\le\lambda_k$.

\paragraph{Schematic two-sided bound.}
\begin{equation}
\label{eq:two-sided-schematic}
  \underbrace{L_k}_{\text{CECR lower bound}}
  \;\le\; \lambda_k^{R,N}
  \;\le\; \lambda_k
  \;\le\; \lambda_k^{R,D}
  \;\le\;
  \underbrace{\lambda_{k,h}^{\CG}}_{\text{Galerkin upper bound}}.
\end{equation}
The truncation gap $\lambda_k^{R,D}-\lambda_k^{R,N}$ is
exponentially small in $R$ for large $R$
(see \S\ref{subsec:truncation-error}) and is treated as
negligible in the numerical experiments of \S\ref{sec:numerics}.

\begin{remark}[Boundary conditions in the numerics]
\label{rmk:bc-convention}
The CECR eigenproblem (§\S\ref{sec:cecr}--\ref{sec:extension})
uses the \emph{Neumann} variational space $H^1(\Omega)$
(natural boundary condition for the nonconforming ECR element;
no face-DOFs on $\partial\Omega$ are eliminated), giving lower
bounds for $\lambda_k^{R,N}$ and hence for $\lambda_k$.
The conforming $P_1$ eigenproblem uses
$H^1_0(\Omega)$ (\emph{Dirichlet} BC), giving upper bounds for
$\lambda_k^{R,D}$ and hence, after noting $\lambda_k^{R,D}\ge\lambda_k$,
effective upper bounds for $\lambda_k$ up to the exponentially small
truncation gap.
All CECR tables in \S\ref{sec:numerics} report Neumann-BC computations;
all conforming Galerkin tables report Dirichlet-BC computations.
\end{remark}

\section{CECR FEM for Piecewise Constant Potentials}
\label{sec:cecr}

Throughout this section $\Omega \subset \R^N$ is a bounded polyhedral
domain with a shape-regular triangulation $\mathcal{T}^h$
(triangles for $N=2$, tetrahedra for $N=3$).
We consider the model eigenvalue problem
\begin{equation}
\label{eq:evp-pc}
  a_h(u,v) = \mu\,b(u,v) \quad \forall v \in H^1(\Omega),
\end{equation}
where $c_h \in \mathbb{P}^0(\mathcal{T}^h)$ is a piecewise constant
function and
\begin{equation}
\label{eq:forms-pc}
  a_h(u,v) = (\nabla u,\nabla v) + (c_h\,u,v),
  \qquad b(u,v) = (u,v).
\end{equation}
The eigenvalues of \eqref{eq:evp-pc} are denoted
$\mu_1 \le \mu_2 \le \cdots$

\subsection{The CECR Space}
\label{subsec:cecr-space}

\paragraph{ECR space}
The Enriched Crouzeix--Raviart (ECR) finite element space on
$\mathcal{T}^h$ is
\begin{equation}
\label{eq:ecr-space}
  V_h^{\ECR} := \left\{
    v_h \in L^2(\Omega) \;\middle|\;
    \begin{array}{l}
      v_h|_K \in \mathbb{P}^1(K) \oplus \mathrm{span}\{\phi_K\}\;\forall K,\\[3pt]
      \displaystyle\int_F \jump{v_h}\,\ds = 0\;\forall \text{ interior face } F
    \end{array}
  \right\},
\end{equation}
where $\phi_K$ is the cell bubble on $K$ and $\jump{\cdot}$ is the
jump across an interior face.
The degrees of freedom are the face averages and the cell averages.

\paragraph{CECR bilinear form}
For piecewise constant $c_h$, the CECR discretization of
\eqref{eq:evp-pc} reads: find $u_h \in V_h^{\ECR}$ such that
\begin{equation}
\label{eq:cecr-bilinear}
  (\nabla u_h, \nabla v_h)_{\mathcal{T}^h}
  + (c_h\,\Pi_{0,h} u_h,\, \Pi_{0,h} v_h)
  = \mu_{k,h}\,(u_h,v_h)
  \quad \forall v_h \in V_h^{\ECR},
\end{equation}
where $\Pi_{0,h}$ is the $L^2$-projection onto $\mathbb{P}^0(\mathcal{T}^h)$
(element average), and $(\nabla\cdot,\nabla\cdot)_{\mathcal{T}^h}$
is the broken gradient inner product.

\paragraph{ECR interpolant}
The ECR interpolation operator $I_h^{\ECR}\colon H^1(\Omega)\to V_h^{\ECR}$
is defined by matching face and element moments:
\begin{equation}
\label{eq:ecr-interp}
  \int_F I_h^{\ECR}v\,\ds = \int_F v\,\ds \;\forall\text{ face }F,
  \qquad
  \int_K I_h^{\ECR}v\,\dx = \int_K v\,\dx \;\forall K.
\end{equation}
In particular, $v - I_h^{\ECR}v$ has zero mean on every element $K$.

\paragraph{CECR interpolation constant}
For each element $K$, the (dimensionless) ECR interpolation constant
$C^{\ECR}(K)$ is the smallest constant such that
\begin{equation}
\label{eq:cecr-constant-K}
  \| v - I_h^{\ECR} v \|_{L^2(K)}
  \;\le\; C^{\ECR}(K)\,h_K\,\|\nabla(v - I_h^{\ECR} v)\|_{L^2(K)}
  \quad \forall v \in H^1(K).
\end{equation}
Since $v - I_h^{\ECR}v$ has zero element mean, \eqref{eq:cecr-constant-K}
is a Poincar\'e-type estimate for the ECR interpolation residual.
The global CECR constant is
\begin{equation}
\label{eq:cecr-constant-global}
  C_h \;:=\; \max_{K \in \mathcal{T}^h} C^{\ECR}(K)\,h_K.
\end{equation}

\begin{proposition}[CECR constant via Payne--Weinberger]
\label{prop:cecr-constant-pw}
For any simplex $K\subset\mathbb{R}^N$ ($N=2,3$),
\begin{equation}
\label{eq:cecr-constant-pw}
  C^{\ECR}(K) \;\le\; \frac{1}{\pi}.
\end{equation}
For $N=2$ the sharper bound $C^{\ECR}(K)\le 0.1490$ holds
\cite{Xie2-Liu-2018}.
In particular,
$C_h=\max_K C^{\ECR}(K)h_K \le h_{\max}/\pi$ for every
shape-regular mesh.
\end{proposition}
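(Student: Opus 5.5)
Since $v-I_h^{\ECR}v$ has zero mean on each element $K$ by the moment conditions \eqref{eq:ecr-interp}, the estimate \eqref{eq:cecr-constant-K} reduces to a Poincaré inequality for mean-zero functions on a convex domain. Every simplex is convex, so we can invoke the Payne--Weinberger inequality: for any convex bounded $\Omega'\subset\R^N$ and any $w\in H^1(\Omega')$ with $\int_{\Omega'}w\dx=0$,
\[
  \|w\|_{L^2(\Omega')}\le \frac{\diam(\Omega')}{\pi}\,\|\nabla w\|_{L^2(\Omega')}.
\]
Note that this holds in every dimension $N$, with the optimal constant $1/\pi$. We will cite the version with the Bebendorf correction of the original proof, which is valid for all $N$.

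The plan has three steps.
\begin{enumerate}[label=(\alph*)]
  \item For fixed $v\in H^1(K)$, set $w:=v-I_h^{\ECR}v$. Check that $w\in H^1(K)$; this holds because $I_h^{\ECR}v|_K\in\mathbb{P}^1(K)\oplus\mathrm{span}\{\phi_K\}$ is smooth on $K$. The second condition in \eqref{eq:ecr-interp} gives $\int_K w\dx=0$.
  \item Apply Payne--Weinberger on $\Omega'=K$, with $\diam(K)=h_K$. This gives $\|w\|_{L^2(K)}\le (h_K/\pi)\|\nabla w\|_{L^2(K)}$. Since $C^{\ECR}(K)$ is defined as the smallest admissible constant, it follows that $C^{\ECR}(K)\le 1/\pi$.
  \item Take the maximum over $K$ in \eqref{eq:cecr-constant-global}. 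This yields $C_h\le\max_K h_K/\pi=h_{\max}/\pi$.
\end{enumerate}
Shape-regularity is not actually needed here; we mention it only because the statement assumes it.

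The sharper two-dimensional value $0.1490$ is not something we would re-derive. It comes from the computer-assisted bound of \cite{Xie2-Liu-2018}. The only point to check there is that their interpolation constant is normalized by $h_K=\diam(K)$ and uses the same face/element moment interpolation, so that it coincides with $C^{\ECR}(K)$ in \eqref{eq:cecr-constant-K}.

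The one step that needs care is the justification of Payne--Weinberger in dimension $N=3$ with constant exactly $1/\pi$. The original Payne--Weinberger argument reduces to a one-dimensional weighted problem by slicing the domain, and this is where the corrected proof is required. We would therefore cite the Bebendorf correction (or the equivalent Acosta--Durán formulation) explicitly, rather than reprove it. Everything else in the argument is immediate.
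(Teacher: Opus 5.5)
Your proposal is correct and follows the paper's proof. Both set $w=v-I_h^{\ECR}v$, use the element-moment condition to get $\int_K w\,dx=0$, apply Payne--Weinberger (with Bebendorf's correction) on the convex simplex of diameter $h_K$, and take the maximum over $K$, citing the $0.1490$ value from Xie--Liu. One small slip in an aside: the Acosta--Dur\'an result is the $L^1$ analogue, with constant $\diam(K)/2$, not an equivalent form of the $L^2$ estimate with constant $1/\pi$, so Bebendorf (together with Payne--Weinberger) is the correct citation.
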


\begin{proof}
Fix $v\in H^1(K)$ and set $w := v - I_h^{\ECR}v$.
By the moment-matching definition \eqref{eq:ecr-interp},
$\int_K w\,\dx = 0$, so $w$ has zero mean on $K$.
The Payne--Weinberger inequality \cite{PayneWeinberger1960,Bebendorf2003}
for a convex domain of diameter $h_K$ gives
\[
  \|w\|_{L^2(K)}
  \;\le\; \frac{h_K}{\pi}\,\|\nabla w\|_{L^2(K)},
\]
i.e.\ $\|v-I_h^{\ECR}v\|_{L^2(K)}
\le \frac{1}{\pi}\,h_K\,\|\nabla(v-I_h^{\ECR}v)\|_{L^2(K)}$.
Taking the supremum over $v$ yields $C^{\ECR}(K)\le 1/\pi$.
\end{proof}

\begin{remark}[Sharper constants]
\label{rmk:cecr-sharper}
The Payne--Weinberger bound $C^{\ECR}(K)\le 1/\pi\approx 0.3183$
holds for all zero-mean $H^1(K)$ functions and is therefore
shape-universal but not sharp for the ECR residual.
For $N=2$, the tighter bound $C^{\ECR}(K)\le 0.1490$,
proved rigorously in \cite{Xie2-Liu-2018} for the ECR element,
improves the explicit gap by a factor $0.1490\pi\approx 0.47$.
For $N=3$ we use the universal bound $C^{\ECR}(K)\le 1/\pi$.
Applying the Crouzeix--Raviart interpolation estimate also gives a
rigorous but rough bound \cite{Liu2015}. A corresponding rigorous and
sharp bound for the 3D ECR element is not available in the literature
and is left for future work.
In the numerical experiments (\Cref{sec:numerics}) we use
$C_h^{\mathrm{PW}}=\max_K h_K/\pi$ for all dimensions.
\end{remark}

\subsection{Pair-Space Lift and the Abstract CECR Explicit-Bound Theorem}
\label{subsec:cecr-pair-space}

\paragraph{Why a pair space?}
The CECR bilinear form $a_h$ in \eqref{eq:cecr-bilinear}
applies the potential $c_h$ to the cell average
$\Pi_{0,h}u_h$, not to $u_h$ itself. This decoupling of the
gradient component ($\nabla u_h$) from the reaction component
($\Pi_{0,h}u_h$) is naturally captured by embedding each function
$u_h$ into a pair $(u_h,\Pi_{0,h}u_h)$ in a product space: the
first slot carries the gradient energy, the second the reaction
energy. In this pair space, the CECR projection
$\widehat\Pi_h$ becomes $\widehat a$-orthogonal, which is the
engine behind all subsequent lower-bound theorems.

\smallskip
The sign-definite bound \eqref{eq:cecr-abstract} of
\Cref{thm:cecr-abstract} and the convergent elementwise bound
\eqref{eq:cecr-lb-convergent} of \Cref{thm:cecr-lb-convergent} below are
both instances of a single abstract theorem. We record the common
machinery here, following the pair-space construction of
\cite[\S4.1.3]{liu2024guaranteed} and the max-min argument of
\cite[\S3.1]{liu2024guaranteed}; the presentation is self-contained.

\paragraph{Pair space}
On the Cartesian product $L^2(\Omega)\times L^2(\Omega)$ define
\begin{equation}
\label{eq:pair-forms}
  \widehat a\bigl((u_1,u_2),(v_1,v_2)\bigr)
    := (\nabla u_1,\nabla v_1)_{\mathcal{T}^h}+(c_h u_2,v_2),
  \quad
  \widehat b\bigl((u_1,u_2),(v_1,v_2)\bigr)
    := (u_1,v_1).
\end{equation}
Let
\begin{equation}
\label{eq:pair-spaces}
  \widehat V   := \{(u,u):u\in H^1(\Omega)\},\qquad
  \widehat V_h := \{(u_h,\Pi_{0,h}u_h):u_h\in V_h^{\ECR}\},
\end{equation}
and $\widehat V(h):=\widehat V+\widehat V_h$.

\begin{lemma}[Diagonal isometry]
\label{lem:diag-isometry}
For every $u,v\in H^1(\Omega)$ and $\widehat u:=(u,u)$,
$\widehat v:=(v,v)$,
\begin{equation}
\label{eq:diag-iso}
  \widehat a(\widehat u,\widehat v) \;=\; a_h(u,v),
  \qquad
  \widehat b(\widehat u,\widehat v) \;=\; (u,v).
\end{equation}
The linear map $u\mapsto(u,u)$ is therefore an isometric bijection
$H^1(\Omega)\to\widehat V$ in the $(\widehat a,\widehat b)$-inner products
(when $c_h\ge 0$, $\widehat a$ defines a norm; for sign-changing $c_h$
it is a quadratic form used only in the isometry sense),
and the pair-space eigenvalue problem
\begin{equation}
\label{eq:pair-evp}
  \widehat a(\widehat u,\widehat v)
  \;=\; \widehat\mu\,\widehat b(\widehat u,\widehat v)
  \qquad \forall\,\widehat v\in\widehat V
\end{equation}
has spectrum $\{\widehat\mu_k\}=\{\mu_k\}$, where $\{\mu_k\}$ are
the eigenvalues of the original problem \eqref{eq:evp-pc}.
\end{lemma}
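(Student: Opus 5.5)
The plan is to verify the identities \eqref{eq:diag-iso} directly from the definitions \eqref{eq:pair-forms} and \eqref{eq:forms-pc}, and then transport the eigenvalue problem through the map $J\colon u\mapsto(u,u)$.

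\emph{Step 1 (identities).} For $u,v\in H^1(\Omega)$, the broken gradient inner product $(\nabla u,\nabla v)_{\mathcal{T}^h}=\sum_K(\nabla u,\nabla v)_K$ coincides with the global $(\nabla u,\nabla v)$. The reason is that $H^1$ functions have weak gradients in $L^2(\Omega)$, and restricting them elementwise does not change the integral. Substituting $u_1=u_2=u$ and $v_1=v_2=v$ into \eqref{eq:pair-forms} then gives $\widehat a(\widehat u,\widehat v)=(\nabla u,\nabla v)+(c_hu,v)=a_h(u,v)$ and $\widehat b(\widehat u,\widehat v)=(u,v)$. Note that the second slot carries $u$ itself here, not $\Pi_{0,h}u$; this is exactly why $\widehat V$ rather than $\widehat V_h$ reproduces $a_h$. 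We also need $(c_hu,v)$ to be finite, which holds because $c_h\in\mathbb{P}^0(\mathcal{T}^h)$ is bounded on the finite mesh.

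\emph{Step 2 (bijection).} The map $J$ is linear and surjective onto $\widehat V$ by the definition \eqref{eq:pair-spaces}. It is injective because its first component is the identity. By Step 1, $J$ preserves both forms, so it is an isometry in the stated sense. I will also note the caveat from the statement: when $c_h$ changes sign, $\widehat a$ is merely a symmetric bilinear form, and ``isometry'' means preservation of the forms, not of a norm.

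\emph{Step 3 (spectrum).} Suppose $(\widehat\mu,\widehat u)$ solves \eqref{eq:pair-evp}. Write $\widehat u=Ju$ and test with $\widehat v=Jv$ for arbitrary $v\in H^1(\Omega)$. Step 1 turns \eqref{eq:pair-evp} into $a_h(u,v)=\widehat\mu\,(u,v)$ for all $v$, which is \eqref{eq:evp-pc}. Conversely, every solution of \eqref{eq:evp-pc} lifts to a solution of \eqref{eq:pair-evp}, and eigenspaces correspond bijectively under $J$, so multiplicities agree. One could alternatively argue through the min-max characterization: the Rayleigh quotients $\widehat a(\widehat u,\widehat u)/\widehat b(\widehat u,\widehat u)$ and $a_h(u,u)/(u,u)$ agree, and $k$-dimensional subspaces correspond under $J$. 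Either way, the ordered eigenvalues coincide, $\widehat\mu_k=\mu_k$.

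No step is a real obstacle. The only point requiring care is Step 1's identification of the broken gradient with the true gradient on $H^1(\Omega)$. I will also remark that \eqref{eq:evp-pc} has discrete spectrum bounded below: $c_h$ is bounded and $\Omega$ is a bounded Lipschitz domain, so the compact embedding $H^1\hookrightarrow L^2$ applies. This makes the indexing $\mu_1\le\mu_2\le\cdots$ meaningful, and the same indexing then transfers to $\widehat\mu_k$.
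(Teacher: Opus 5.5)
Your proposal is correct and follows essentially the same route as the paper. You identify the broken gradient pairing with the global one on $H^1(\Omega)$ (the paper phrases this as $u,v$ having no jumps across interior faces), read both identities off \eqref{eq:pair-forms}, and transport the eigenproblem through the bijection $u\mapsto(u,u)$; your remarks on matching multiplicities and on discreteness of the spectrum just spell out what the paper compresses into ``bijectivity gives the eigenvalue correspondence.''
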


\begin{proof}
For $u,v\in H^1(\Omega)$, $(\nabla u,\nabla v)_{\mathcal{T}^h}
=(\nabla u,\nabla v)$ because $u,v$ carry no jumps across interior
faces, so $\widehat a(\widehat u,\widehat v)
=(\nabla u,\nabla v)+(c_h u,v)=a_h(u,v)$. The $\widehat b$-identity
is immediate. Bijectivity gives the eigenvalue correspondence.
\end{proof}

\paragraph{Pair-space interpolant}
Define
\begin{equation}
\label{eq:pair-interp}
  \widehat\Pi_h:\widehat V(h)\to\widehat V_h,
  \qquad
  \widehat\Pi_h(u_1,u_2) := \bigl(I_h^{\ECR}u_1,\,\Pi_{0,h}u_2\bigr),
\end{equation}
where $I_h^{\ECR}:H^1(\Omega)\to V_h^{\ECR}$ is the canonical ECR
interpolant defined in \eqref{eq:ecr-interp} below (in the proof of
\Cref{thm:cecr-abstract}). Because $\Pi_{0,h}I_h^{\ECR}u
=\Pi_{0,h}u$, the image of a diagonal pair $(u,u)\in\widehat V$ is
$(I_h^{\ECR}u,\Pi_{0,h}I_h^{\ECR}u)\in\widehat V_h$, so
$\widehat\Pi_h$ does map $\widehat V\to\widehat V_h$.

\begin{lemma}[$\widehat a$-orthogonality]
\label{lem:pair-orthogonality}
For every $\widehat u\in\widehat V(h)$ and every
$\widehat v_h=(v_h,\Pi_{0,h}v_h)\in\widehat V_h$,
\begin{equation}
\label{eq:pair-orth}
  \widehat a(\widehat u-\widehat\Pi_h\widehat u,\widehat v_h) \;=\; 0.
\end{equation}
Consequently, whenever $\widehat a$ is positive definite on
$\widehat V_h$, the map $\widehat\Pi_h$ is the $\widehat a$-orthogonal
projection $\widehat V(h)\to\widehat V_h$, and the Pythagorean identity
\begin{equation}
\label{eq:pair-pythagoras}
  \|\widehat u\|_{\widehat a}^{\,2}
  = \|\widehat\Pi_h\widehat u\|_{\widehat a}^{\,2}
  + \|\widehat u-\widehat\Pi_h\widehat u\|_{\widehat a}^{\,2}
\end{equation}
holds for every $\widehat u\in\widehat V(h)$.
\end{lemma}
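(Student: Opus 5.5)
The plan is to expand the bilinear form by its definition and check that the two slots vanish separately. Write $\widehat u=(u_1,u_2)\in\widehat V(h)$, so that $\widehat u-\widehat\Pi_h\widehat u=(u_1-I_h^{\ECR}u_1,\;u_2-\Pi_{0,h}u_2)$. By \eqref{eq:pair-forms},
\[
  \widehat a(\widehat u-\widehat\Pi_h\widehat u,\widehat v_h)
  =(\nabla(u_1-I_h^{\ECR}u_1),\nabla v_h)_{\mathcal{T}^h}
  +(c_h(u_2-\Pi_{0,h}u_2),\Pi_{0,h}v_h).
\]
The reaction term is routine. On each element $K$ the product $c_h\,\Pi_{0,h}v_h$ is constant, and $u_2-\Pi_{0,h}u_2$ has zero mean on $K$ by the definition of the $L^2$-projection. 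Hence the reaction term vanishes elementwise.

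The gradient term is the substantive step. I would integrate by parts on each $K$, with $w:=u_1-I_h^{\ECR}u_1$:
\[
  \int_K\nabla w\cdot\nabla v_h\dx
  =-\int_K w\,\Delta v_h\dx+\int_{\partial K}w\,\partial_{\bm n}v_h\ds .
\]
This needs two properties of the ECR shape space $\mathbb{P}^1(K)\oplus\mathrm{span}\{\phi_K\}$. First, $\Delta v_h$ must be constant on $K$. This holds provided the enrichment function is taken, as in the standard ECR element, to be the quadratic $\phi_K=|x|^2$ (or an equivalent quadratic modulo $\mathbb{P}^1$), rather than the product-of-barycentric bubble. I would make this convention explicit at this point, since a non-quadratic bubble breaks the argument. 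Second, $\partial_{\bm n}v_h$ must be constant on each face $F$. This holds because the gradient of $|x|^2$ is $2x$, and $x\cdot\bm n_F$ is constant on the flat face $F$.

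With these two facts, the volume term vanishes by the cell-moment condition $\int_K w\dx=0$ in \eqref{eq:ecr-interp}. Each boundary term is a constant times $\int_F w\ds$, which vanishes by the face-moment condition. Summing over $K$ gives zero with no need to cancel jumps, since each face moment already vanishes on every face, including boundary faces under the Neumann setting. If $u_1$ is itself piecewise ECR (the $\widehat V_h$ part of $\widehat V(h)$), then $I_h^{\ECR}$ reproduces it on each element and $w=0$ there, so the identity persists; this covers $\widehat V(h)=\widehat V+\widehat V_h$ by linearity.

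For the consequence, assume $\widehat a$ is positive definite on $\widehat V_h$. It should be noted that $\widehat\Pi_h$ maps into $\widehat V_h$ and is idempotent there, since $I_h^{\ECR}$ reproduces ECR functions and $\Pi_{0,h}$ reproduces constants. Together with \eqref{eq:pair-orth}, this makes it the $\widehat a$-orthogonal projection. The Pythagorean identity \eqref{eq:pair-pythagoras} then follows by expanding $\widehat u=\widehat\Pi_h\widehat u+(\widehat u-\widehat\Pi_h\widehat u)$ and killing the cross term with \eqref{eq:pair-orth}. Here $\|\cdot\|_{\widehat a}^2$ is read as the quadratic form $\widehat a(\cdot,\cdot)$, which is a norm only under positivity.

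The main obstacle is the gradient term, specifically confirming that the enrichment makes $\Delta v_h$ elementwise constant and $\partial_{\bm n}v_h$ facewise constant.
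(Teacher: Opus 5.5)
Your proposal is correct and follows the paper's proof essentially step for step. You split $\widehat a$ into its broken-gradient and reaction slots, kill the reaction term using the zero cell mean of $u_2-\Pi_{0,h}u_2$, and kill the gradient term by elementwise integration by parts with the cell and face moment conditions of $I_h^{\ECR}$, using that $\Delta v_h$ is constant on $K$ and $\partial_{\bm n}v_h$ is constant on each face. Your explicit remark that facewise constancy of $\partial_{\bm n}v_h$ requires the quadratic enrichment $|x|^2$ is a useful clarification: the paper only invokes $V_h^{\ECR}\subset\mathbb{P}^2(\mathcal{T}^h)$, which gives constant $\Delta v_h$ but not that property.
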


\begin{proof}
Expanding $\widehat a$ and writing $\widehat u=(u_1,u_2)$,
$\widehat u-\widehat\Pi_h\widehat u
=(u_1-I_h^{\ECR}u_1,\,u_2-\Pi_{0,h}u_2)$, so
\[
  \widehat a(\widehat u-\widehat\Pi_h\widehat u,\widehat v_h)
  = (\nabla(u_1-I_h^{\ECR}u_1),\nabla v_h)_{\mathcal{T}^h}
  + (c_h(u_2-\Pi_{0,h}u_2),\Pi_{0,h}v_h).
\]
The first term vanishes by the ECR broken-gradient orthogonality,
derived as follows.
On each element $K$, $\text{div}\; (\nabla v_h)|_K$ is constant (since
$v_h\in V_h^{\ECR}\subset\mathbb{P}^2(\mathcal{T}^h)$ locally), so
integration by parts gives
\[
  (\nabla(u_1-I_h^{\ECR}u_1),\nabla v_h)_K
  = \int_{\partial K}(u_1-I_h^{\ECR}u_1)
    \underbrace{(\nabla v_h|_K\cdot\bm{n}_K)}_{\text{const.\ on each face}}\,\mathrm{d}S.
\]
By the definition of $I_h^{\ECR}$ in \eqref{eq:ecr-interp}, the
face average of $u_1-I_h^{\ECR}u_1$ vanishes on every face
$F\subset\partial K$, and since $\nabla v_h|_K\cdot\bm{n}_K$ is
constant on $F$, the integral over $F$ is zero.
Summing over all faces of $K$ gives
$(\nabla(u_1-I_h^{\ECR}u_1),\nabla v_h)_K=0$
for every $K$ and every $v_h\in V_h^{\ECR}$.
For the
second, $c_h\Pi_{0,h}v_h\in\mathbb{P}^0(\mathcal{T}^h)$ is
element-constant, and $\int_K(u_2-\Pi_{0,h}u_2)\,\dx=0$ on every $K$,
so the $L^2$-pairing vanishes cell by cell. The Pythagorean identity
follows from $\widehat a$-orthogonality in the standard way.
\end{proof}

\begin{remark}
The assumption $c_h\in\mathbb{P}^0(\mathcal{T}^h)$ is essential: a
piecewise-affine $c_h$ would break the cell-wise cancellation in the
second term above, invalidating the Pythagorean
identity~\eqref{eq:pair-pythagoras} and the CECR lower-bound argument.
\end{remark}

\paragraph{Admissibility and positivity on pair space}
The CECR admissibility hypothesis ``$a_h$ is positive definite on
$V_h^{\ECR}$'' transfers to the pair space as follows. For
$(u_h,\Pi_{0,h}u_h)\in\widehat V_h$,
$\widehat a((u_h,\Pi_{0,h}u_h),(u_h,\Pi_{0,h}u_h))
=\|\nabla u_h\|_{\mathcal{T}^h}^{\,2}+(c_h\Pi_{0,h}u_h,\Pi_{0,h}u_h)
=a_h(u_h,u_h)$.
Hence $\widehat a$ is positive definite on $\widehat V_h$ if and only
if $a_h$ is positive definite on $V_h^{\ECR}$. Similarly, by
\Cref{lem:diag-isometry}, $\widehat a$ is positive definite on
$\widehat V$ if and only if $a_h$ is positive definite on
$H^1(\Omega)$. The sign of $c_h$ never enters.

\begin{lemma}[Pair-space CECR residual estimate]
\label{lem:cecr-pair-residual}
Let $c_h\ge 0$ be piecewise constant. Then for every
$\widehat u=(u,u)\in\widehat V$,
\begin{equation}
\label{eq:pair-error-estimate}
  \|\widehat u-\widehat\Pi_h\widehat u\|_{\widehat b}
  \;\le\; C_h\,\|\widehat u-\widehat\Pi_h\widehat u\|_{\widehat a},
\end{equation}
where $C_h:=\max_K C^{\ECR}(K)\,h_K$ is the global CECR
interpolation constant from \eqref{eq:cecr-constant-K}.
\end{lemma}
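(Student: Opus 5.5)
We unfold the pair-space norms and then apply the elementwise estimate \eqref{eq:cecr-constant-K} cell by cell. Write $\widehat u=(u,u)$ with $u\in H^1(\Omega)$ and set $w:=u-I_h^{\ECR}u$. By \eqref{eq:pair-interp},
\[
  \widehat u-\widehat\Pi_h\widehat u=(w,\;u-\Pi_{0,h}u).
\]
The definition \eqref{eq:pair-forms} then gives
\[
  \|\widehat u-\widehat\Pi_h\widehat u\|_{\widehat b}^{\,2}=\|w\|_{L^2(\Omega)}^2,
  \qquad
  \|\widehat u-\widehat\Pi_h\widehat u\|_{\widehat a}^{\,2}
  =\|\nabla w\|_{\mathcal{T}^h}^{\,2}+\bigl(c_h(u-\Pi_{0,h}u),u-\Pi_{0,h}u\bigr).
\]
This is the only place the hypothesis $c_h\ge0$ is used. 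The reaction term is nonnegative, so
\[
  \|\widehat u-\widehat\Pi_h\widehat u\|_{\widehat a}^{\,2}\ \ge\ \|\nabla w\|_{\mathcal{T}^h}^{\,2}.
\]
It therefore suffices to prove $\|w\|_{L^2(\Omega)}\le C_h\|\nabla w\|_{\mathcal{T}^h}$.

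Next we localize. On each element $K$, the restriction $w|_K=u|_K-(I_h^{\ECR}u)|_K$ is exactly the interpolation residual of $u|_K\in H^1(K)$. The ECR interpolant is defined locally, through the face and cell moments of $K$ in \eqref{eq:ecr-interp}, so its restriction to $K$ depends only on $u|_K$. The definition \eqref{eq:cecr-constant-K} of $C^{\ECR}(K)$ then yields
\[
  \|w\|_{L^2(K)}\le C^{\ECR}(K)\,h_K\,\|\nabla w\|_{L^2(K)}\le C_h\|\nabla w\|_{L^2(K)}.
\]
The second inequality uses \eqref{eq:cecr-constant-global}. If one prefers a fully explicit constant, \Cref{prop:cecr-constant-pw} guarantees that $C^{\ECR}(K)$ is finite and at most $1/\pi$.

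Squaring and summing over $K\in\mathcal{T}^h$ gives
\[
  \|w\|_{L^2(\Omega)}^2\le C_h^2\sum_K\|\nabla w\|_{L^2(K)}^2=C_h^2\|\nabla w\|_{\mathcal{T}^h}^{\,2}.
\]
Combining this with the previous paragraph and taking square roots gives \eqref{eq:pair-error-estimate}.

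No step is a genuine obstacle. The points to state carefully are:
\begin{itemize}
  \item The interpolant acts locally, so the elementwise constant applies to $w|_K$.
  \item The broken gradient norm is a sum of elementwise norms.
  \item The reaction part of the $\widehat a$-norm may simply be discarded.
\end{itemize}
The last point also explains why $c_h\ge0$ is assumed here. In the sign-indefinite case, discarding that term is exactly what must be replaced, for instance via the elementwise reaction constant $\Gamma_h$.
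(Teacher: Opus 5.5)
Your proposal is correct and follows the paper's proof essentially verbatim: you identify the residual pair as $(u-I_h^{\ECR}u,\,u-\Pi_{0,h}u)$, drop the nonnegative reaction term using $c_h\ge 0$, and then apply the elementwise estimate \eqref{eq:cecr-constant-K} and sum over $K$. Your remark that $I_h^{\ECR}$ acts locally, so the elementwise constant applies to $w|_K$, is a detail the paper leaves implicit.
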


\begin{proof}
From \eqref{eq:pair-interp}, $\widehat u-\widehat\Pi_h\widehat u
=(u-I_h^{\ECR}u,\,u-\Pi_{0,h}u)$. Reading off the norms
from \eqref{eq:pair-forms}:
\begin{align*}
  \|\widehat u-\widehat\Pi_h\widehat u\|_{\widehat b}^{\,2}
  &= \|u-I_h^{\ECR}u\|^2, \\
  \|\widehat u-\widehat\Pi_h\widehat u\|_{\widehat a}^{\,2}
  &= \|\nabla_h(u-I_h^{\ECR}u)\|^2
\!   +\! \bigl(c_h(u-\Pi_{0,h}u),\,u-\Pi_{0,h}u\bigr)
  \!\ge\! \|\nabla_h(u-I_h^{\ECR}u)\|^2,
\end{align*}
where the inequality uses $c_h\ge 0$. Applying the local CECR
interpolation estimate \eqref{eq:cecr-constant-K} on each $K$
and summing,
\begin{equation}
\label{eq:ecr-local-L2-to-grad}
  \|u-I_h^{\ECR}u\|^2
  \;=\;\sum_K\|u-I_h^{\ECR}u\|_{L^2(K)}^{\,2}
  \;\le\; C_h^{\,2}\,\|\nabla_h(u-I_h^{\ECR}u)\|^2
  \;\le\; C_h^{\,2}\,\|\widehat u-\widehat\Pi_h\widehat u\|_{\widehat a}^{\,2},
\end{equation}
which is \eqref{eq:pair-error-estimate}.
\end{proof}

\begin{theorem}[CECR explicit lower bound, Liu \cite{Liu2015}]
\label{thm:cecr-abstract}
Let $c_h > 0$ be piecewise constant. Then $\mu_{k,h} >  0$ for
all $k$, and
\begin{equation}
\label{eq:cecr-abstract}
  \mu_k \;\ge\;
  \frac{\mu_{k,h}}{1 + C_h^{\,2}\,\mu_{k,h}},
  \qquad k=1,2,\ldots.
\end{equation}
\end{theorem}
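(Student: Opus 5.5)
Since $c_h>0$, both quadratic forms are positive. On $\widehat V_h$ we have $\widehat a((u_h,\Pi_{0,h}u_h),(u_h,\Pi_{0,h}u_h))=a_h(u_h,u_h)\ge\|\nabla u_h\|_{\mathcal{T}^h}^2+\min c_h\,\|\Pi_{0,h}u_h\|^2$. This vanishes only if $u_h$ is elementwise constant with zero mean on each element, hence $u_h=0$. So $\widehat a$ is positive definite on $\widehat V_h$, the discrete eigenproblem \eqref{eq:cecr-bilinear} is a symmetric positive definite pencil, and $\mu_{k,h}>0$. By \Cref{lem:diag-isometry}, $\widehat a$ is positive on $\widehat V$ as well, so $\mu_k>0$ and the continuous eigenvalues are characterized by min-max on $\widehat V$. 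We may therefore use \Cref{lem:pair-orthogonality}, the Pythagorean identity, and \Cref{lem:cecr-pair-residual}.

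The core is a max-min argument. Fix $k$ and set $E_k:=\mathrm{span}\{\widehat u_1,\dots,\widehat u_k\}\subset\widehat V$, where $\widehat u_j=(u_j,u_j)$ are the $\widehat b$-orthonormal eigenpairs of \eqref{eq:pair-evp}. Every $\widehat u\in E_k$ satisfies $\|\widehat u\|_{\widehat a}^2\le\mu_k\|\widehat u\|_{\widehat b}^2$. First I will show that $\widehat\Pi_h$ is injective on $E_k$. Suppose $\widehat\Pi_h\widehat u=0$. Then $\widehat u=\widehat u-\widehat\Pi_h\widehat u$, and \Cref{lem:cecr-pair-residual} gives $\|\widehat u\|_{\widehat b}\le C_h\|\widehat u\|_{\widehat a}\le C_h\sqrt{\mu_k}\|\widehat u\|_{\widehat b}$. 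If $C_h^2\mu_k<1$, this forces $\widehat u=0$. If instead $C_h^2\mu_k\ge1$, the claimed bound is automatic, because $\mu_{k,h}/(1+C_h^2\mu_{k,h})<1/C_h^2\le\mu_k$. So we may assume injectivity, and then $\widehat\Pi_hE_k$ is a $k$-dimensional subspace of $\widehat V_h$.

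Next, for $\widehat u\in E_k$, write $\widehat u=\widehat\Pi_h\widehat u+\widehat e$. The triangle inequality in $\widehat b$ (a seminorm) together with \Cref{lem:cecr-pair-residual} gives
\[
\|\widehat u\|_{\widehat b}\le\|\widehat\Pi_h\widehat u\|_{\widehat b}+C_h\|\widehat e\|_{\widehat a}.
\]
The discrete min-max on the $k$-dimensional space $\widehat\Pi_hE_k$ yields some $\widehat u\ne0$ in $E_k$ with $\mu_{k,h}\|\widehat\Pi_h\widehat u\|_{\widehat b}^2\le\|\widehat\Pi_h\widehat u\|_{\widehat a}^2$. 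Squaring and applying Cauchy--Schwarz in the form $(x+y)^2\le(1+t)x^2+(1+1/t)y^2$ with $t=C_h^2\mu_{k,h}$ gives
\[
\|\widehat u\|_{\widehat b}^2\le\Bigl(\tfrac{1}{\mu_{k,h}}+C_h^2\Bigr)\Bigl(\|\widehat\Pi_h\widehat u\|_{\widehat a}^2+\|\widehat e\|_{\widehat a}^2\Bigr)=\Bigl(\tfrac{1}{\mu_{k,h}}+C_h^2\Bigr)\|\widehat u\|_{\widehat a}^2,
\]
where the last equality is the Pythagorean identity \eqref{eq:pair-pythagoras}. Combining this with $\|\widehat u\|_{\widehat a}^2\le\mu_k\|\widehat u\|_{\widehat b}^2$ yields $1\le(1/\mu_{k,h}+C_h^2)\mu_k$, which is \eqref{eq:cecr-abstract}.

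The main obstacle I expect is the discrete max-min step: choosing a single $\widehat u$ so that the discrete Rayleigh bound and the continuous bound hold simultaneously. I would handle it as above, because every vector in $E_k$ satisfies the continuous bound. It then suffices that the $k$-dimensional image $\widehat\Pi_hE_k$ contains a vector whose discrete Rayleigh quotient $\|\cdot\|_{\widehat a}^2/\|\cdot\|_{\widehat b}^2$ is at least $\mu_{k,h}$, which is exactly the max-min characterization of $\mu_{k,h}$. Beyond this, the only care needed is the injectivity check and the degenerate case $C_h^2\mu_k\ge1$, which is why both are handled explicitly.
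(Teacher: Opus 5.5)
Your proof is correct and follows essentially the same route as the paper's appendix proof. The paper also splits on whether $\widehat\Pi_h$ is injective on $E_k$; its non-injective case gives $\mu_k\ge 1/C_h^2$, which is your degenerate case $C_h^2\mu_k\ge 1$ seen from the other side. In the injective case it uses the max-min for $\mu_{k,h}$ on $\widehat\Pi_h(E_k)$, the $\widehat b$-triangle inequality, \Cref{lem:cecr-pair-residual}, Cauchy--Schwarz in $\mathbb{R}^2$ (your weighted Young inequality) and the Pythagorean identity, exactly as you do.
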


\begin{proof}
Since $c_h> 0$, the form
$a_h(v_h,v_h)=\|\nabla v_h\|_{\mathcal{T}^h}^{\,2}
+(c_h\Pi_{0,h}v_h,\Pi_{0,h}v_h)\ge 0$, with equality implies $v_h=0$;
hence $a_h$ is positive definite on both $H^1(\Omega)$ and
$V_h^{\ECR}$, so $\mu_{k,h} > 0$.
The remaining argument uses only two ingredients:
(i) $\widehat\Pi_h$ is an $\widehat a$-orthogonal projection
from $\widehat V$ onto $\widehat V_h$ (so that the Pythagorean
identity \eqref{eq:pair-pythagoras} holds), and (ii) the
residual estimate of \Cref{lem:cecr-pair-residual}. Neither
the specific structure of $\widehat\Pi_h$ (defined in
\eqref{eq:pair-interp}) nor that of $V_h^{\ECR}$ is used
anywhere else in the derivation. Consequently, once
\Cref{lem:cecr-pair-residual} is in hand, the argument is
the same, step for step, as the general projection-based
eigenvalue-bound framework of \cite[Theorem~2.1]{Liu2015} and
\cite[Theorem~4.1]{liu2024guaranteed}. For self-containedness,
we reproduce the proof in \Cref{app:cecr-abstract-proof}.
\end{proof}

\subsection{Sign-Changing Piecewise Constant Potential: Classical Shift Remark}
\label{subsec:cecr-shift}

When $c_h$ takes negative values, $a_h$ need not be positive definite
and \Cref{thm:cecr-abstract} does not directly apply.  A classical
remedy is to shift by the $L^\infty$ norm of the negative part:
\begin{equation}
\label{eq:gamma-h}
  \gamma_h := \|c_h^-\|_{L^\infty(\Omega)}
  = \max_{K : c_h|_K < 0} |c_h|_K|.
\end{equation}
Because the CECR mass matrix is unchanged by a constant shift of
$c_h$, applying \Cref{thm:cecr-abstract} to
$-\Delta+(c_h+\gamma_h)$ and un-shifting gives
\begin{equation}
\label{eq:cecr-optimal-shift}
  \mu_k \;\ge\;
  \frac{\mu_{k,h}+\gamma_h}{1+(\mu_{k,h}+\gamma_h)\,C_h^2}-\gamma_h
  \;=:\; L_k^{\mu,\gamma_h}.
\end{equation}

\begin{remark}[Limitation of the classical $\gamma_h$-shift]
\label{rmk:gamma-h-limitation}
The estimate \eqref{eq:cecr-optimal-shift} is sharp when $\gamma_h$
is mesh-bounded, e.g.\ for $c_h^-\in L^\infty(\Omega)$ uniformly in
$h$.  For Coulomb-type singularities, however, the element average
yields $\gamma_h\sim h_{K(a)}^{-1}$ on \emph{every} shape-regular mesh
(the singular element contributes
$|K|^{-1}\int_K |x-a|^{-1}\,\dx\sim h_K^{-1}$), so the additive
term $-\gamma_h$ in \eqref{eq:cecr-optimal-shift} diverges as
the local diameter $h_{K(a)}$ at the singularity tends to zero.
Convergence of $L_k^{\mu,\gamma_h}$ then requires the
combined product $\gamma_h\varepsilon_h$ (controlled by the
perturbation to the continuous potential, see \S\ref{sec:extension})
to vanish, a condition that typically fails on uniform meshes and
forces aggressive mesh grading.  This motivates the \emph{elementwise} CECR bound developed in
\S\ref{subsec:cecr-convergent} below, which avoids $\gamma_h$ entirely;
we keep \eqref{eq:cecr-optimal-shift} only for comparison purposes.
\end{remark}

\subsection{Convergent CECR Lower Bound via Elementwise Estimates}
\label{subsec:cecr-convergent}

We now establish a \emph{computable} CECR lower bound for
sign-changing $c_h$ that bypasses the form-norm constant
$\widetilde C_h$ entirely. The key insight is that the negative
reaction error can be bounded elementwise by exploiting the
zero-cell-mean property of $w_2=u-\Pi_{0,h}u$ via the
Payne--Weinberger inequality on each element. Combined with the
standard form-boundedness of $c_h^-$ for coercivity, this yields
a bound with an explicit, mesh-dependent correction factor $A_h\to 0$
under refinement, so that the gap closes.

\paragraph{Shifted pair-space forms}
For a fixed shift $\sigma>0$, define the \emph{shifted CECR bilinear
form} on $V_h^{\ECR}$ by
\begin{equation}
\label{eq:shifted-cecr}
  a_h^\sigma(u_h,v_h)
  :=
  (\nabla u_h,\nabla v_h)_{\mathcal{T}^h}
  + \bigl((c_h{+}\sigma)\,\Pi_{0,h}u_h,\,\Pi_{0,h}v_h\bigr),
  \qquad u_h,v_h\in V_h^{\ECR}.
\end{equation}
The corresponding shifted pair-space form is
\begin{equation}
\label{eq:shifted-pair-forms}
  \widehat a^\sigma\bigl((u_1,u_2),(v_1,v_2)\bigr)
  := (\nabla u_1,\nabla v_1)_{\mathcal{T}^h}
     + ((c_h{+}\sigma)\,u_2,v_2).
\end{equation}
All machinery from \S\ref{subsec:cecr-pair-space}---diagonal isometry,
$\widehat a^\sigma$-orthogonality of $\widehat\Pi_h$, Pythagorean
identity---carries over with $c_h$ replaced by $c_h+\sigma$.

\begin{assumption}[Form-bounded negative part]
\label{ass:form-bounded}\label{ass:kato-ch}
The negative part of the potential is form-bounded both at the
continuous level and at the discrete (cell-average) level:
\begin{enumerate}[label=(\roman*)]
\item There exist constants $\alpha\in[0,1)$ and $\beta\ge 0$ such that
\begin{equation}
\label{eq:form-bound}
  \int_\Omega V^-(x)\,|v(x)|^2\,\dx
  \;\le\; \alpha\,\|\nabla v\|_{L^2(\Omega)}^{\,2}
        + \beta\,\|v\|_{L^2(\Omega)}^{\,2}
  \qquad \forall\,v\in H^1(\Omega).
\end{equation}
\item There exist $\epsilon\in(0,1)$ and $C_\epsilon>0$, independent of
$h$, such that
\begin{equation}
\label{eq:kato-ch}
  (c_h^-\,u,u)
  \;\le\; \epsilon\,\|\nabla u\|^2 + C_\epsilon\,\|u\|^2
  \qquad \forall\,u\in H^1(\Omega).
\end{equation}
\end{enumerate}
Item~(i) underwrites self-adjointness of $H=-\Delta+V$ via KLMN and the
form comparison of \S\ref{subsec:perturbation}; item~(ii) is the
hypothesis used by the CECR coercivity argument below. We retain both
as separate hypotheses because the inequality
$(c_h^-\,u,u)\le(V^-\,u,u)$ holds Jensen-by-Jensen only on
piecewise-constant $u$, so transferring (i) to (ii) for general
$u\in H^1(\Omega)$ is not direct.
\end{assumption}

\begin{remark}[Validity of \Cref{ass:form-bounded} for Coulomb-type potentials]
\label{rmk:kato-ch-valid}
For the Coulomb negative part $V^-=Z/|x-a|$ on $\Omega=D(R)$ (single
centre; finite sums of centres extend by adding the right-hand sides):

\smallskip
\noindent\emph{Continuous bound (i).}
The required conclusion on the bounded Lipschitz domain
$\Omega=D(R)$ is the infinitesimal form bound
\begin{equation}
\label{eq:coulomb-cont-form-bound}
  \int_\Omega \frac{Z|u|^2}{|x-a|}\,dx
  \le
  \epsilon\|\nabla u\|_{L^2(\Omega)}^2
  + C_\epsilon\|u\|_{L^2(\Omega)}^2
  \qquad \forall\,\epsilon>0,\quad \forall u\in H^1(\Omega),
\end{equation}
with $C_\epsilon$ independent of the mesh.  This is the bounded-domain
version of \eqref{eq:form-bound}; it is important that the $L^2$
term is present.  The classical Hardy inequality in $N=3$,
\[
  \int_{\mathbb R^3}\frac{|w|^2}{|x-a|^2}\,dx
  \le 4\|\nabla w\|_{L^2(\mathbb R^3)}^2,
\]
is a full-space (or zero-trace) statement and should not be read as a
gradient-only estimate for general Neumann-domain functions, since
constants would then be counterexamples.

One way to obtain \eqref{eq:coulomb-cont-form-bound} in $N=3$ is to
extend $u\in H^1(\Omega)$ to $Eu\in H^1(\mathbb R^3)$, apply the
full-space Hardy inequality to $Eu$, use
$|x-a|^{-1}\le\delta |x-a|^{-2}+(4\delta)^{-1}$, and absorb the
extension constants into $C_\epsilon$. In $N=2$, one may instead
choose $p\in(1,2)$, use $|x-a|^{-1}\in L^p(\Omega)$, H\"older's
inequality, and the bounded-domain Sobolev/Gagliardo--Nirenberg
embedding $H^1(\Omega)\hookrightarrow L^{2p/(p-1)}(\Omega)$, followed
by Young's inequality. This again yields \eqref{eq:coulomb-cont-form-bound}.

For later use it is convenient to record a direct local proof that gives
the same type of estimate in both $N=2$ and $N=3$.  After translating the
centre to the origin, a weighted Hardy estimate on a ball gives, for
every $u\in H^1_0(B(\rho))$ and $N=2,3$,
\begin{equation}
\label{eq:hardy-ball}
  \int_{B(\rho)}\frac{u^2}{|x|}\,dx
  \;\le\; C_N\rho\,\|\nabla u\|_{L^2(B(\rho))}^2,
\end{equation}
where $C_N$ is dimension dependent.  In $N=3$ this follows
immediately from the classical Hardy inequality for
$|x|^{-2}$. In $N=2$ the classical $|x|^{-2}$ Hardy inequality
is not available, but \eqref{eq:hardy-ball} still holds for the
weaker Coulomb weight $|x|^{-1}$: writing
$u(r,\theta)=-\int_r^\rho\partial_su(s,\theta)\,ds$ and applying
Cauchy--Schwarz with the radial weight gives a logarithmic factor
whose integral in $r$ is finite and of order $\rho$.

Localising an arbitrary $u\in H^1(D(R))$ with a radial cutoff
$\chi_\rho$ satisfying $\chi_\rho\equiv 1$ on $B(\rho/2)$,
$\chi_\rho\equiv 0$ outside $B(\rho)$, and $|\nabla\chi_\rho|\le 2/\rho$,
splitting the integral on $B(\rho/2)$ versus $D(R)\setminus B(\rho/2)$,
applying \eqref{eq:hardy-ball} to $v=\chi_\rho u$ on the inner piece, and
bounding $1/|x|\le 2/\rho$ on the outer piece, yields
\begin{equation}
\label{eq:coulomb-rho}
  \int_{D(R)}\frac{Z\,u^2}{|x|}\,dx
  \;\le\; C Z\rho\,\|\nabla u\|^2 + \frac{C Z}{\rho}\,\|u\|^2
  \qquad\forall\,\rho>0,\;u\in H^1(D(R)).
\end{equation}
Here and below $C$ denotes a dimension- and cutoff-dependent constant
independent of $h$. Choosing $\rho\simeq\epsilon/Z$ in
\eqref{eq:coulomb-rho} recovers \eqref{eq:coulomb-cont-form-bound} with,
for example, $C_\epsilon=CZ^2/\epsilon$.  For a multi-centre potential,
the corresponding constants are summed over the centres.

\smallskip
\noindent\emph{Discrete cell-average bound (ii).}
The preceding Hardy--cutoff argument proves the continuous form
bound for $V^-$. It does not, by itself, prove
\eqref{eq:kato-ch} for arbitrary cell averages $c_h^-$ and arbitrary
meshes. For the graded, shape-regular meshes used here, however,
one obtains the discrete estimate by comparing the cell averages
locally with the Coulomb weight. On elements in the singular patch,
shape regularity and the assumption that each nucleus is a mesh
vertex give $c_h^-|_K=O(h_K^{-1})$ and
$K\subset B(a,Ch_K)$; hence
\[
  c_h^-|_K\int_K u^2
  \;\le\;
  C\int_{B(a,Ch_K)}\frac{u^2}{|x-a|}\,dx .
\]
On elements away from the nuclei, the distance-grading condition
$h_K\lesssim{\rm dist}(K,\{a_i\})$ implies
$c_h^-|_K\le C\inf_{x\in K^\ast}V^-(x)$ on a uniformly bounded
patch $K^\ast$. Summing over the mesh and using the finite overlap
of these patches reduces \((c_h^-u,u)\) to the continuous Coulomb
form, up to a mesh-independent multiplicative constant. Combining
this reduction with \eqref{eq:coulomb-cont-form-bound} gives \eqref{eq:kato-ch}
with an $h$-independent, although generally nonsharp, constant.
In the computations we instead use the sharper Rayleigh-quotient
certification described in \Cref{rmk:ceps-sharp} and
\Cref{app:ceps-eta}.

It is worthy pointing out that the Hardy--cutoff route only certifies the \emph{existence} of an
admissible pair $(\epsilon,C_\epsilon)$; the resulting
$C_\epsilon=O(Z^2/\epsilon)$ is generally far from sharp.  A computable
and substantially tighter value is obtained from a Rayleigh-quotient
principle; see \Cref{rmk:ceps-sharp} and \Cref{app:ceps-eta}.
\end{remark}

\begin{remark}[Rigorous and sharp estimation of $C_\epsilon$]
\label{rmk:ceps-sharp}
Let $\eta$ be the minimal value of the following Rayleigh quotient for a fixed
$\epsilon$:
\begin{equation}
\label{eq:opt-c-eps}
\eta = \inf _{0\ne u\in H^1(\Omega)}
\frac{\epsilon\|\nabla u\|^2 - \int_{\Omega }c_h^-u^2 \dx }{\|u\|^2}.
\end{equation}
The trial function $u=1$ shows that $\eta<0$.  By the definition of
$\eta$, one has
$$
\int_\Omega c_h^-u^2 \dx \le \epsilon \|\nabla u\|^2 - \eta \|u\|^2.
$$
Thus the optimal constant is $C_\epsilon=-\eta$. The minimization problem
\eqref{eq:opt-c-eps} has essentially the same structure as the Schr\"odinger
eigenvalue problem and can therefore be solved rigorously, by the sign-changing
CECR lower-bound theorem, to provide a sharper value of $C_\epsilon$ than the
one obtained from general inequalities.  The resulting computable lower bound
for $\eta$ is given in \Cref{app:ceps-eta}.
\end{remark}

\begin{lemma}[Coercivity of the shifted pair-space form]
\label{lem:coercivity-sigma}
Under \Cref{ass:kato-ch}, for every $\sigma>C_\epsilon$ and every
$u\in H^1(\Omega)$,
\begin{equation}
\label{eq:coercivity}
  \|(u,u)\|_{\widehat a^\sigma}^{\,2}
  \;\ge\; (1-\epsilon)\,\|\nabla u\|^2
        + (\sigma-C_\epsilon)\,\|u\|^2.
\end{equation}
In particular,
\begin{equation}
\label{eq:nabla-control}
  \|\nabla u\|^2
  \;\le\; \frac{1}{1-\epsilon}\,\|(u,u)\|_{\widehat a^\sigma}^{\,2}.
\end{equation}
\end{lemma}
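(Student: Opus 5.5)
The plan is to expand the pair-space norm of a diagonal pair directly from \eqref{eq:shifted-pair-forms} and then control the negative part of $c_h$ with \Cref{ass:kato-ch}(ii).

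\emph{Step 1: expand the norm.} For $u\in H^1(\Omega)$ the broken gradient coincides with the true gradient, exactly as in the proof of \Cref{lem:diag-isometry}. Hence
\[
  \|(u,u)\|_{\widehat a^\sigma}^{\,2}
  = \|\nabla u\|^2 + \bigl((c_h+\sigma)u,u\bigr).
\]

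\emph{Step 2: split $c_h$.} Write $c_h = c_h^+ - c_h^-$ and discard the nonnegative term $(c_h^+u,u)\ge 0$. This gives
\[
  \|(u,u)\|_{\widehat a^\sigma}^{\,2}
  \ge \|\nabla u\|^2 - (c_h^-u,u) + \sigma\|u\|^2 .
\]

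\emph{Step 3: apply the form bound.} Inserting \eqref{eq:kato-ch}, i.e.\ $(c_h^-u,u)\le \epsilon\|\nabla u\|^2 + C_\epsilon\|u\|^2$, yields \eqref{eq:coercivity} immediately.

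\emph{Step 4: deduce \eqref{eq:nabla-control}.} Since $\sigma>C_\epsilon$, the $L^2$ term $(\sigma-C_\epsilon)\|u\|^2$ is nonnegative and can be dropped. Because $\epsilon<1$, dividing by $1-\epsilon>0$ gives \eqref{eq:nabla-control}.

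There is no real obstacle in this argument. The only point needing care is that the pair must be the diagonal pair $(u,u)$ with $u\in H^1(\Omega)$. This ensures that $c_h$ acts on $u$ itself rather than on $\Pi_{0,h}u$, so the continuous-level hypothesis \eqref{eq:kato-ch}, which is stated for $u\in H^1(\Omega)$, applies verbatim. I would note in passing that the same estimate is \emph{not} claimed on $\widehat V_h$, where the reaction term involves $\Pi_{0,h}u_h$.
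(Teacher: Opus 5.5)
Your proposal is correct and matches the paper's proof step for step: expand $\|(u,u)\|_{\widehat a^\sigma}^{\,2}=\|\nabla u\|^2+((c_h+\sigma)u,u)$, drop the nonnegative term $(c_h^+u,u)$, and insert \eqref{eq:kato-ch}. Your Step~4 spells out the deduction of \eqref{eq:nabla-control} that the paper leaves implicit: drop the nonnegative term $(\sigma-C_\epsilon)\|u\|^2$ and divide by $1-\epsilon>0$.
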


\begin{proof}
Expand and separate the shift:
\begin{align*}
  \|(u,u)\|_{\widehat a^\sigma}^{\,2}
  &= \|\nabla u\|^2 + (c_h\,u,u) + \sigma\|u\|^2 \\
  &\ge \|\nabla u\|^2 - (c_h^-\,u,u) + \sigma\|u\|^2 \\
  &\ge (1-\epsilon)\|\nabla u\|^2 + (\sigma-C_\epsilon)\|u\|^2.
\end{align*}
\end{proof}

\begin{definition}[Elementwise reaction constant]
\label{def:Gamma-h}
Define the \emph{elementwise reaction constant}
\begin{equation}
\label{eq:Gamma-h}
  \Gamma_h
  \;:=\; \max_{K\in\mathcal{T}^h}
         \frac{c_h^-\big|_K\;\cdot\; h_K^{\,2}}{\pi^2},
\end{equation}
where $c_h^-|_K=\max(-c_h|_K,0)$ is the negative part of $c_h$ on $K$.
\end{definition}

\begin{lemma}[Elementwise reaction bound for zero-cell-mean functions]
\label{lem:reaction-w2}
For every $u\in H^1(\Omega)$ and $w_2:=u-\Pi_{0,h}u$,
\begin{equation}
\label{eq:reaction-w2}
  (c_h^-\,w_2,w_2)
  \;\le\; \Gamma_h\,\|\nabla u\|^2.
\end{equation}
\end{lemma}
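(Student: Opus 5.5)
The plan is to argue cell by cell. On each simplex $K$ the function $c_h^-$ is a nonnegative constant, so
\[
  (c_h^-\,w_2,w_2) \;=\; \sum_{K\in\mathcal{T}^h} c_h^-\big|_K\,\|w_2\|_{L^2(K)}^{\,2}.
\]
Since $u\in H^1(\Omega)$, its restriction $u|_K$ lies in $H^1(K)$, and $w_2|_K=u|_K-|K|^{-1}\int_K u\,\dx$ has zero mean on $K$. A simplex is convex with diameter $h_K$, so the Payne--Weinberger inequality (already used in the proof of \Cref{prop:cecr-constant-pw}) applies to $w_2|_K$ and gives
\[
  \|w_2\|_{L^2(K)} \;\le\; \frac{h_K}{\pi}\,\|\nabla w_2\|_{L^2(K)} .
\]
The key observation is that $\Pi_{0,h}u$ is constant on $K$, so $\nabla w_2=\nabla u$ there. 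This yields the elementwise bound
\[
  c_h^-\big|_K\,\|w_2\|_{L^2(K)}^{\,2}
  \;\le\; \frac{c_h^-|_K\,h_K^{\,2}}{\pi^2}\,\|\nabla u\|_{L^2(K)}^{\,2}
  \;\le\; \Gamma_h\,\|\nabla u\|_{L^2(K)}^{\,2},
\]
where the last step uses the definition \eqref{eq:Gamma-h} of $\Gamma_h$ as a maximum over elements. Summing over $K$ and using $\sum_K\|\nabla u\|_{L^2(K)}^2=\|\nabla u\|^2$ (valid because $u\in H^1(\Omega)$ has no jumps) gives \eqref{eq:reaction-w2}.

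I do not expect a genuine obstacle here. The only point that needs care is confirming that Payne--Weinberger is legitimately applied cell by cell: it requires zero mean over exactly the convex set $K$, and this is precisely what $\Pi_{0,h}$ provides. Elements with $c_h|_K\ge 0$ contribute nothing because $c_h^-|_K=0$ there.
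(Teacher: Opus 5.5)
Your proposal is correct and follows the paper's proof essentially verbatim: apply Payne--Weinberger on each convex simplex $K$ to the zero-mean function $w_2|_K$, use $\nabla w_2=\nabla u$ on $K$, then bound $c_h^-|_K h_K^2/\pi^2$ by $\Gamma_h$ and sum over the elements.
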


\begin{proof}
Since $c_h^-$ is constant on each $K$ and
$\int_K w_2\dx=0$, the Payne--Weinberger inequality gives
$\|w_2\|_{L^2(K)}^{\,2}\le(h_K/\pi)^2\|\nabla w_2\|_{L^2(K)}^{\,2}
=(h_K/\pi)^2\|\nabla u\|_{L^2(K)}^{\,2}$
($\nabla w_2=\nabla u$ \ on $K$). Summing:
\[
  (c_h^-\,w_2,w_2)
  = \sum_K c_h^-\big|_K\|w_2\|_K^{\,2}
  \le \sum_K \frac{c_h^-|_K\,h_K^{\,2}}{\pi^2}\,\|\nabla u\|_K^{\,2}
  \le \Gamma_h\,\|\nabla u\|^2.
\]
\end{proof}

\begin{remark}[Behaviour of $\Gamma_h$ for Coulomb potentials]
\label{rmk:Gamma-h-rate}
For $V=-1/|x|$ on a shape-regular mesh with mesh
function $h_K$, the singular element $K_0\ni 0$ contributes
$c_h^-|_{K_0}\approx C_0/h_{K_0}$ and
$\gamma_{K_0}=c_h^-|_{K_0}\,h_{K_0}^{\,2}/\pi^2
\approx C_0\,h_{K_0}/\pi^2$,
which vanishes with the local mesh size.  On a uniform mesh
($h_K=h$ for all $K$), elements at distance $r\ge h$ from the
origin contribute
$\gamma_K\le h^2/(r\pi^2)$, with maximum $\gamma\sim h/\pi^2$ attained
near the singularity, so that $\Gamma_h=O(h)$.  On a
graded mesh with $h_K\propto h\,|x_K|^\theta$, where $h$ is the
mesh-family parameter, the far-field dominates:
$\gamma_K\sim h_{\max}^2|x_K|^{2\theta-1}/\pi^2$
at distance $|x_K|$ from the singularity, and
$\Gamma_h=O(h_{\max}^{\,2}/R)$ for $\theta>1/2$, where $R$
is the domain radius. In either case, $\Gamma_h\to 0$ as the mesh
is refined,
in sharp contrast to $\gamma_h=\|c_h^-\|_{L^\infty}\to\infty$.
\end{remark}

\begin{theorem}[Convergent CECR lower bound for sign-changing $c_h$]
\label{thm:cecr-lb-convergent}
Let $c_h\in\mathbb{P}^0(\mathcal{T}^h)$ be piecewise constant
(sign unrestricted).
Suppose \Cref{ass:kato-ch} holds with parameters $\epsilon\in(0,1)$
and $C_\epsilon>0$, and let $\sigma>C_\epsilon$ be such that
$a_h^\sigma$ (see definition in \eqref{eq:shifted-cecr}) is positive definite on both $H^1(\Omega)$ and
$V_h^{\ECR}$. Define
\begin{equation}
\label{eq:A-constant}
  A_h \;:=\; \frac{\Gamma_h}{1-\epsilon}.
\end{equation}
Let $\mu_{k,h}^\sigma>0$ be the $k$-th eigenvalue of the shifted
CECR eigenproblem $a_h^\sigma(u_h,v_h)=\mu_{k,h}^\sigma(u_h,v_h)$
on $V_h^{\ECR}$ (i.e.\ the $k$-th CECR Ritz value for potential
$c_h+\sigma$; see \S\ref{subsec:fixed-shift}), and let
$C_h$ be the CECR interpolation constant from
\eqref{eq:cecr-constant-global}. Then
\begin{equation}
\label{eq:cecr-lb-convergent}
  \mu_k
  \;\ge\;
  \frac{\mu_{k,h}^\sigma}
       {(1+A_h)\bigl(1+C_h^{\,2}\,\mu_{k,h}^\sigma\bigr)}
  \;-\;\sigma
  \;=:\; L_k^{\mu,\sigma}.
\end{equation}
If a computable upper bound $\widetilde C_h\ge C_h$ is available, then
the same lower bound remains valid with $C_h$ replaced by
$\widetilde C_h$.  In particular, one may take the universal
Payne--Weinberger surrogate
\[
  \widetilde C_h=C_h^{\mathrm{PW}}:=\max_K h_K/\pi
\]
from \Cref{prop:cecr-constant-pw}.  Moreover,
$\mu_{k,h}^\sigma\le\mu_{k,h}+\sigma$, with equality under
refinement (the difference is $O(\sigma h^2\mu_{k,h})$ in the
mesh-family parameter); see also Remark \ref{rmk:shift-comparison}.
\end{theorem}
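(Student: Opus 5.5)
The plan is to prove the bound for the shifted problem and then subtract $\sigma$. Since a constant shift leaves $b$ unchanged, the continuous eigenvalues of $a_h^\sigma$ on $H^1(\Omega)$ are $\mu_k+\sigma$. It therefore suffices to show
\[
  \mu_k+\sigma \;\ge\; \frac{\mu_{k,h}^\sigma}{(1+A_h)(1+C_h^2\mu_{k,h}^\sigma)} .
\]
I will work in the shifted pair space with $\widehat a^\sigma$. By the remark preceding \Cref{lem:coercivity-sigma}, the diagonal isometry, the $\widehat a^\sigma$-orthogonality of $\widehat\Pi_h$ and the Pythagorean identity all remain valid. This uses the hypothesis that $a_h^\sigma$ is positive definite on $H^1(\Omega)$ and on $V_h^{\ECR}$, so that $\|\cdot\|_{\widehat a^\sigma}$ is a genuine norm on $\widehat V$ and on $\widehat V_h$. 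What fails is \Cref{lem:cecr-pair-residual}, because $c_h+\sigma$ may be negative on some cells. The key step is a replacement residual estimate in which the error of order $\Gamma_h$ is charged to the full norm $\|\widehat u\|_{\widehat a^\sigma}$ rather than to the residual.

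\textbf{Modified residual estimate.} Let $\widehat u=(u,u)$ and set $w_1=u-I_h^{\ECR}u$, $w_2=u-\Pi_{0,h}u$, so that $\widehat w:=\widehat u-\widehat\Pi_h\widehat u=(w_1,w_2)$. Expanding the norm gives
\[
  \|\nabla_h w_1\|^2=\|\widehat w\|_{\widehat a^\sigma}^2-((c_h+\sigma)w_2,w_2)\le \|\widehat w\|_{\widehat a^\sigma}^2+(c_h^-w_2,w_2),
\]
since $c_h^+$ and $\sigma$ contribute nonnegatively. By \Cref{lem:reaction-w2} and then \eqref{eq:nabla-control} (which needs $\sigma>C_\epsilon$),
\[
  (c_h^-w_2,w_2)\le\Gamma_h\|\nabla u\|^2\le A_h\|\widehat u\|_{\widehat a^\sigma}^2 .
\]
The local estimate \eqref{eq:cecr-constant-K} then yields
\[
  \|w_1\|^2\le C_h^2\bigl(\|\widehat w\|_{\widehat a^\sigma}^2+A_h\|\widehat u\|_{\widehat a^\sigma}^2\bigr).
\]
Only an upper bound on $C_h$ enters here, which is why replacing $C_h$ by any $\widetilde C_h\ge C_h$ (for instance $C_h^{\mathrm{PW}}$) keeps the bound valid.

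\textbf{Max--min step.} Let $E_k$ be the span of the first $k$ continuous eigenfunctions of the shifted problem, and let $\phi_1,\dots,\phi_{k-1}$ be the first $k-1$ discrete eigenfunctions of $a_h^\sigma$ on $V_h^{\ECR}$. Imposing the $k-1$ conditions $(I_h^{\ECR}u,\phi_j)=0$ on the $k$-dimensional space $E_k$ leaves some $u\neq 0$. For this $u$ we have $\|\widehat u\|_{\widehat a^\sigma}^2\le(\mu_k+\sigma)\|u\|^2$. The discrete min--max principle gives $\|I_h^{\ECR}u\|^2\le\|\widehat\Pi_h\widehat u\|_{\widehat a^\sigma}^2/\mu_{k,h}^\sigma$; this also covers the case $I_h^{\ECR}u=0$ trivially. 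Write $u=I_h^{\ECR}u+w_1$ and apply the triangle inequality and Cauchy--Schwarz in the form $(x+y)^2\le(1/\mu+C^2)(\mu x^2+y^2/C^2)$. Then use the Pythagorean identity $\|\widehat\Pi_h\widehat u\|^2+\|\widehat w\|^2=\|\widehat u\|^2$, all norms being $\widehat a^\sigma$-norms:
\[
  \|u\|^2\le\Bigl(\frac1{\mu_{k,h}^\sigma}+C_h^2\Bigr)\bigl(\|\widehat\Pi_h\widehat u\|^2+\|\widehat w\|^2+A_h\|\widehat u\|^2\bigr)
  =\Bigl(\frac1{\mu_{k,h}^\sigma}+C_h^2\Bigr)(1+A_h)\|\widehat u\|^2 .
\]
Combining this with $\|\widehat u\|^2\le(\mu_k+\sigma)\|u\|^2$ and dividing by $\|u\|^2$ gives the claimed bound.

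\textbf{Final claim and main obstacle.} The inequality $\mu_{k,h}^\sigma\le\mu_{k,h}+\sigma$ is immediate, since both problems share the mass matrix and their stiffness forms differ by exactly $\sigma(\Pi_{0,h}\cdot,\Pi_{0,h}\cdot)\le\sigma(\cdot,\cdot)$. The difference is $\sigma\|u_h-\Pi_{0,h}u_h\|^2=O(\sigma h^2\|\nabla_h u_h\|^2)$ by Payne--Weinberger on each cell. The main obstacle is the modified residual estimate. Its extra term $A_h\|\widehat u\|^2$ is not a residual quantity, and it must enter only multiplicatively as the factor $(1+A_h)$. For that, the coercivity \eqref{eq:nabla-control} must be applied to the diagonal pair $(u,u)$, not to the residual, and the Pythagorean bookkeeping must still close with exactly $\|\widehat u\|^2$.
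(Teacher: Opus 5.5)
Your proposal is correct and follows the paper's proof. You use the same residual bound $\|w_1\|^2\le C_h^2\bigl[\widehat a^\sigma(\widehat w,\widehat w)+A_h\,\widehat a^\sigma(\widehat u,\widehat u)\bigr]$, with the $\Gamma_h$ term charged to the diagonal pair through the coercivity estimate, followed by the triangle inequality, Cauchy--Schwarz in $\mathbb{R}^2$ and the Pythagorean identity. The only difference is bookkeeping in the max--min step: you pick $u\in E_k$ by imposing $k-1$ orthogonality constraints on $I_h^{\ECR}u$, which merges the paper's two cases (non-injective versus injective $\widehat\Pi_h$ on $E_k$). You also call $\|\widehat w\|_{\widehat a^\sigma}$ a norm although $\widehat a^\sigma(\widehat w,\widehat w)$ can be negative, but you only use it algebraically, so the argument is unaffected.
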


\begin{proof}
The argument parallels that of \Cref{thm:cecr-abstract}, but
with a modified pair-space error estimate that keeps track of the
sign-indefinite reaction term.

\emph{Step~1: error estimate.}
For $(u,u)\in\widehat V$, write $w_1=u-I_h^{\ECR}u$,
$w_2=u-\Pi_{0,h}u$, and note $\int_K w_1\dx=0$ on every $K$
(by the element-moment matching in \eqref{eq:ecr-interp}).
Expanding $\widehat a^\sigma$ as a quadratic form on the error pair
(note: $\widehat a^\sigma$ is not necessarily positive on off-diagonal
pairs $(w_1,w_2)$ with $w_1\ne w_2$; we use it here purely as a
bilinear form, not a norm):
\begin{equation}
\label{eq:ew-grad-split}
  \|\nabla w_1\|_{\mathcal{T}^h}^{\,2}
  = \widehat a^\sigma((w_1,w_2),(w_1,w_2))
    - (c_h^\sigma\,w_2,w_2).
\end{equation}
Since $\sigma>0$, the negative part satisfies
$(c_h+\sigma)^-\le c_h^-$ pointwise, so
\begin{equation}
	\label{eq:est-using-gamma-h}
  -(c_h^\sigma\,w_2,w_2)
  \;\le\; (c_h^-\,w_2,w_2)
  \;\le\; \Gamma_h\,\|\nabla u\|^2
\end{equation}
by \Cref{lem:reaction-w2}, and
$\|\nabla u\|^2\le(1{-}\epsilon)^{-1}\widehat a^\sigma((u,u),(u,u))$
by \Cref{lem:coercivity-sigma}.  Denoting
$\alpha:=\widehat a^\sigma(\widehat\Pi_h(u,u),\widehat\Pi_h(u,u))$ and
$\beta:=\widehat a^\sigma((w_1,w_2),(w_1,w_2))$
(with $\alpha+\beta=\widehat a^\sigma((u,u),(u,u))$ by the
Pythagorean identity \eqref{eq:pair-pythagoras},
which holds since $(u,u)=\widehat\Pi_h(u,u)+(w_1,w_2)$ is
$\widehat a^\sigma$-orthogonal by \Cref{lem:pair-orthogonality}), we obtain
\begin{equation}
\label{eq:ew-grad-bound}
  \|\nabla w_1\|^2
  \;\le\; \beta + A_h(\alpha+\beta)
  \;=\; (1{+}A_h)\,\beta + A_h\,\alpha.
\end{equation}
Applying the CECR interpolation estimate
\eqref{eq:cecr-constant-global} to
$w_1=u-I_h^{\ECR}u$,
\begin{equation}
\label{eq:ew-w1-bound}
  \|w_1\|^2
  \;\le\; C_h^{\,2}\,\|\nabla w_1\|^2
  \;\le\; C_h^{\,2}\bigl[(1{+}A_h)\,\beta + A_h\,\alpha\bigr].
\end{equation}

\emph{Step~2: max-min argument.}
Let $E_k=\mathrm{span}\{\widehat u_1,\ldots,\widehat u_k\}$
($\widehat a^\sigma$-orthonormal eigenvectors on $\widehat V$).
Write $\widetilde R(\widehat\phi):=
\widehat b(\widehat\phi,\widehat\phi)/\widehat a^\sigma(\widehat\phi,\widehat\phi)$
for the reciprocal Rayleigh quotient, so that
$\min_{E_k}\widetilde R = 1/\mu_k^\sigma$
(cf.\ the max-min identity \eqref{eq:max-min-pair} adapted to $a^\sigma$).

\emph{Case~1.} If $\widehat\Pi_h\widehat\phi=0$ for some
$\widehat\phi\in E_k\setminus\{0\}$, then $\alpha=0$ and
$\|(u,u)\|_{\widehat a^\sigma}^{\,2}=\beta$. From
\eqref{eq:ew-w1-bound},
$\|w_1\|^2\le C_h^{\,2}(1{+}A_h)\beta$, so
$\widetilde R(\widehat\phi)\le C_h^{\,2}(1{+}A_h)$.
Since $\min_{E_k}\widetilde R=1/\mu_k^\sigma$, we get
$\mu_k^\sigma\ge 1/(C_h^{\,2}(1{+}A_h))
\ge\mu_{k,h}^\sigma/((1{+}A_h)(1+C_h^{\,2}\mu_{k,h}^\sigma))$
(the last step is elementary), which is
\eqref{eq:cecr-lb-convergent} after un-shifting.

\emph{Case~2.} Otherwise $\widehat\Pi_h$ is injective on $E_k$,
and $\widehat\Pi_h(E_k)\subset\widehat V_h$ is $k$-dimensional.
Using $\widehat\Pi_h(E_k)$ as a trial subspace in the max-min for
$1/\mu_{k,h}^\sigma$, choose
$\widehat\phi\in E_k$ so that
$\widehat v_h:=\widehat\Pi_h\widehat\phi$ satisfies
\[
  \widetilde R(\widehat v_h)
  =
  \min_{\widehat z_h\in\widehat\Pi_h(E_k)\setminus\{0\}}
  \widetilde R(\widehat z_h)
  \le \frac{1}{\mu_{k,h}^\sigma}.
\]
By homogeneity we scale $\widehat\phi$ so that
$\|\widehat\phi\|_{\widehat a^\sigma}=1$.  Then
$\alpha+\beta=1$ and triangle inequality gives
\[
  \|\widehat\phi\|_{\widehat b}
  \;\le\; \|\widehat v_h\|_{\widehat b}
        + \|\widehat\phi-\widehat v_h\|_{\widehat b}.
\]
First term: by the choice of $\widehat v_h$,
$\|\widehat v_h\|_{\widehat b}\le(\mu_{k,h}^\sigma)^{-1/2}\sqrt{\alpha}$.
Second term: $\|\widehat\phi-\widehat v_h\|_{\widehat b}^{\,2}
=\|w_1\|^2\le C_h^{\,2}[(1{+}A_h)\beta+A_h\alpha]
=C_h^{\,2}[A_h+\beta]$, where the simplification uses
$(1{+}A_h)\beta+A_h\alpha = \beta+A_h(\alpha{+}\beta)=\beta+A_h$
together with $\alpha+\beta=1$ (the normalisation
$\|\widehat\phi\|_{\widehat a^\sigma}=1$).
By the Cauchy--Schwarz inequality in $\mathbb{R}^2$,
\begin{align*}
  \|\widehat\phi\|_{\widehat b}
  &\le (\mu_{k,h}^\sigma)^{-1/2}\sqrt{\alpha}
       + C_h\sqrt{A_h+\beta} \\
  &\le \sqrt{(\mu_{k,h}^\sigma)^{-1}+C_h^{\,2}}
       \;\sqrt{\alpha+A_h+\beta}
  \;=\; \sqrt{(\mu_{k,h}^\sigma)^{-1}+C_h^{\,2}}
       \;\sqrt{1+A_h}.
\end{align*}
Since $\widetilde R(\widehat\phi)
=\|\widehat\phi\|_{\widehat b}^{\,2}
\le(1{+}A_h)((\mu_{k,h}^\sigma)^{-1}+C_h^{\,2})$
and $\min_{E_k}\widetilde R=1/\mu_k^\sigma\le\widetilde R(\widehat\phi)$,
inverting gives
\eqref{eq:cecr-lb-convergent}.
\end{proof}

\begin{remark}[Convergence and rate]
\label{rmk:convergent-rate}
As the mesh-family parameter $h\to 0$: $C_h=O(h)$, $\Gamma_h\to 0$
(\Cref{rmk:Gamma-h-rate}), and $A_h\to 0$. Hence the denominator
$(1{+}A_h)(1+C_h^{2}\mu_{k,h}^\sigma)\to 1$ and
$L_k^{\mu,\sigma}\to\mu_k$: \emph{the CECR gap closes}.
If one replaces $C_h$ by the computable surrogate
$C_h^{\mathrm{PW}}\ge C_h$, the same conclusion still holds.

\emph{Rate of the CECR gap} $(\mu_{k,h}^\sigma{-}\sigma)-L_k^{\mu,\sigma}$.
\begin{itemize}
\item Uniform meshes: $\Gamma_h=O(h)$, $A_h=O(h)$; dominant
  term $C_h^2=O(h^2)$; CECR gap closes at $O(h^2)$.
\item Graded meshes ($h_K\propto h\,|x_K|^\theta$, $\theta>1/2$,
  with $h_{\max}=O(h)$):
  $\Gamma_h=O(h_{\max}^{\,2}/R)=O(h^2)$, so $A_h=O(h^2)$
  and the CECR gap closes at $O(h^2)$ in the mesh-family parameter
  in both $N=2$ and $N=3$.
\end{itemize}

\emph{Rate of the certified $\lambda_k$-bound} $L_k\le\lambda_k$
(via \Cref{lem:perturb-sigma}) additionally depends on $\varepsilon_h$.
The direct patch/off-patch estimates in \Cref{app:graded-eps} give
$\varepsilon_h=O(h^2/\kappa_\sigma)$ in three dimensions when
$\omega_0\subset B_{\beta h^2}$, and also in two dimensions when the
singular patch is chosen sufficiently small; for example
$\omega_0\subset B_{\beta h^{\gamma_{\rm p}}}$ with
$\gamma_{\rm p}>2$ and a compatible Sobolev exponent $p>1$ close to
one.  Thus the certified $\lambda_k$-bound is expected to tighten at
$O(h^2)$ under the graded-patch design described in
\Cref{rmk:2d-rate}.

In contrast, the $\gamma_h$-shift bound
\eqref{eq:cecr-optimal-shift} has
$\gamma_h=\|c_h^-\|_{L^\infty}=\Theta(h^{-1})\to\infty$.
\Cref{thm:cecr-lb-convergent} avoids this issue by
shifting the negative reaction error from the denominator
to a small additive correction $A_h$ in the numerator.
\end{remark}

\begin{remark}[2D perturbation rate and patch radius]
\label{rmk:2d-rate}
In dimension $N=2$, the direct estimate
\eqref{eq:direct-est-c-grd-2d} may be used with any exponent
$p\in(1,2)$ and the corresponding embedding
$H^1(\Omega)\hookrightarrow L^{2p/(p-1)}(\Omega)$.
If $\omega_0\subset B_{\rho_h}$ with
$\rho_h=\beta h^{\gamma_{\rm p}}$, then the patch contribution is
$O(h^{\gamma_{\rm p}(2/p-1)})$; see
\eqref{eq:graded-eps-2d}.  Hence $\varepsilon_h=O(h^2)$ is obtained by
choosing $\gamma_{\rm p}(2/p-1)\ge2$.  In particular, a patch radius
$\rho_h=\beta h^{2+\delta}$ is theoretically sufficient for any
$\delta>0$, provided $p>1$ is chosen close enough to one.  The numerical
certification does not need this asymptotic simplification: it uses the
directly computed patch norm in \eqref{eq:direct-est-c-grd-2d}.
\end{remark}

\begin{remark}[Comparison summary]
\label{rmk:comparison-table}
\Cref{tab:comparison-methods} summarises the three approaches.
\begin{table}[htbp]
\centering
\caption{Comparison of CECR lower-bound methods for sign-changing $c_h$.}
\label{tab:comparison-methods}
\small
\begin{tabular}{lccc}
\toprule
Method & Effective $C_h^{\,2}$ & Computable? & Gap under refinement \\
\midrule
$\gamma_h$-shift (\S\ref{subsec:cecr-shift})
  & $C_h^{\,2}$, shift $\gamma_h\to\infty$ & yes & diverges \\
\Cref{thm:cecr-lb-convergent}
  & $(1{+}A_h)C_h^{\,2}$, $A_h\to 0$
  & yes & $\approx O(h^2)$ \\
\bottomrule
\end{tabular}
\end{table}
\end{remark}

\begin{remark}[Recovery of the positive-$c_h$ case]
\label{rmk:convergent-positive}
When $c_h\ge 0$, one has $c_h^-=0$, hence $\Gamma_h=0$ and $A_h=0$.
\Cref{thm:cecr-lb-convergent} then reduces to
$\mu_k^\sigma\ge\mu_{k,h}^\sigma/(1+C_h^{2}\mu_{k,h}^\sigma)$,
which upon un-shifting recovers Liu's positive-$c_h$ bound
\eqref{eq:cecr-abstract}.  If one further substitutes the computable
surrogate $C_h^{\mathrm{PW}}\ge C_h$, one obtains the fully explicit
version used in the numerical tables.
\end{remark}

\section{Extension to General Unbounded Potentials}
\label{sec:extension}

We now drop the assumption that $V$ is piecewise constant and allow
general $V = V^+ - V^- \in L^{p_0}(\Omega)$ with $p_0 > N/2$.
The strategy is:
\begin{enumerate}[label=(\roman*)]
  \item replace $V$ by its piecewise constant average $c_h$
    (\S\ref{subsec:pc-approx});
  \item introduce a \emph{fixed} additive shift $\sigma$ that renders
    the CECR form $a_h^\sigma$ positive definite on $V_h^{\ECR}$
    (\S\ref{subsec:fixed-shift});
  \item bound the perturbation $\lambda_k - \mu_k$ induced by
    $V\leftrightarrow c_h$ (\S\ref{subsec:perturbation});
  \item assemble a fully computable two-sided bound
    (\S\ref{subsec:main-theorem}), using the convergent CECR lower
    bound \Cref{thm:cecr-lb-convergent} for the shifted problem.
\end{enumerate}

Throughout this section the shift $\sigma>0$ is a single fixed constant
that is independent of $h$ once the mesh is fine enough to resolve the
ground-state energy of $-\Delta+V$ (made precise in
\S\ref{subsec:fixed-shift}). In particular, $\sigma$ does \emph{not}
depend on $\gamma_h=\|c_h^-\|_{L^\infty}$. The convergent CECR
lower bound of \Cref{thm:cecr-lb-convergent} provides a computable
bound on $\mu_k(-\Delta+c_h)$ via the elementwise constant $\Gamma_h$;
the present section combines it with a perturbation lemma
to obtain a fully computable two-sided bound on $\lambda_k(-\Delta+V)$.

\subsection{Piecewise Constant Approximation}
\label{subsec:pc-approx}

\begin{definition}[Element average]
\label{def:element-average}
For $V \in L^1_{\rm loc}(\Omega)$ and $K \in \mathcal{T}^h$, define
\begin{equation}
\label{eq:element-average}
  c_h\big|_K := \frac{1}{|K|} \int_K V(x)\,\dx,
  \qquad K \in \mathcal{T}^h.
\end{equation}
\end{definition}
Write $c_h = c_h^+ - c_h^-$ with $c_h^\pm\ge 0$ piecewise constant.
No $L^\infty$ control of $c_h^-$ is required for the present analysis;
the only relevant object is the spectrum of the discrete form
$a_h$ introduced in \S\ref{sec:cecr}.

\subsection{Fixed Shift and Positive-Definiteness of \texorpdfstring{$a_h^\sigma$}{a\_h\textasciicircum sigma}}
\label{subsec:fixed-shift}

Recall from \eqref{eq:shifted-cecr} that $a_h^\sigma$ is the CECR
form with potential $c_h+\sigma$ acting in the reaction slot.  This is
precisely the restriction of the
pair-space shifted form $\widehat a^\sigma$
\eqref{eq:shifted-pair-forms} to the ECR diagonal subspace
$\widehat V_h$, so the pair-space machinery of
\S\ref{subsec:cecr-pair-space} applies directly with $c_h$
replaced by $c_h+\sigma$.  The $k$-th eigenvalue
$\mu_{k,h}^\sigma$ of the problem
$a_h^\sigma(u_h,\cdot)=\mu_{k,h}^\sigma(\cdot,\cdot)$ on
$V_h^{\ECR}$ is computable as the $k$-th CECR Ritz value for the
potential $c_h+\sigma$.

\begin{remark}[Relation to the full $L^2$-mass shift]
\label{rmk:shift-comparison}
Adding $\sigma$ to the potential in the \emph{reaction slot}
(as in \eqref{eq:shifted-cecr}) differs from adding
$\sigma(u_h,v_h)$ to the full $L^2$ mass.  For the full-mass
shift the CECR eigenvalue shifts exactly by $\sigma$; for the
reaction-slot shift one has
\[
  \mu_{k,h}^\sigma \;\le\; \mu_{k,h}+\sigma,
\]
with $\mu_{k,h}+\sigma - \mu_{k,h}^\sigma
=\sigma\,\bigl(\|u_{k,h}\|^2-\|\Pi_{0,h}u_{k,h}\|^2\bigr)/
\|u_{k,h}\|^2 = O(\sigma h^2\mu_{k,h})$ in the mesh-family parameter,
which vanishes under refinement.
Using $\mu_{k,h}^\sigma$ (rather than $\mu_{k,h}+\sigma$) in the
lower-bound formula is therefore conservative but rigorous; in
the limit the two coincide.
\end{remark}

The continuous eigenvalues of $-\Delta+(c_h+\sigma)$ satisfy
\begin{equation}
\label{eq:sigma-shift-continuous}
  \mu_k(-\Delta+c_h+\sigma) = \mu_k(-\Delta+c_h) + \sigma
  = \mu_k + \sigma.
\end{equation}

\subsection{CECR Lower Bound on \texorpdfstring{$\mu_k$}{mu\_k} and Perturbation to \texorpdfstring{$\lambda_k$}{lambda\_k}}
\label{subsec:perturbation}

Let $\{\mu_k\}$ denote the eigenvalues of $-\Delta+c_h$ and
$\{\lambda_k\}$ those of $-\Delta+V$, both on $\Omega$ with Neumann BC.
Applying \Cref{thm:cecr-lb-convergent} to the shifted problem
$-\Delta+c_h+\sigma$ (which has CECR Ritz values
$\mu_{k,h}^\sigma>0$; see \Cref{rmk:shift-comparison}) yields the
computable bound
\begin{equation}
\label{eq:convergent-LB-sigma}
  \mu_k
  \;\ge\;
  \frac{\mu_{k,h}^\sigma}
       {(1+A_h)\bigl(1+\mu_{k,h}^\sigma(C_h^{\mathrm{PW}})^{\,2}\bigr)}
  \;-\;\sigma
  \;=:\; L_k^{\mu,\sigma},
\end{equation}
where $A_h=\Gamma_h/(1{-}\epsilon)$ is defined in
\eqref{eq:A-constant}. Every quantity is computable from the mesh
data. The gap $\mu_k-L_k^{\mu,\sigma}\to 0$ under refinement
(\Cref{rmk:convergent-rate}).

Define the coercivity constant of the shifted averaged form,
\begin{equation}
\label{eq:kappa-sigma}
  \kappa_\sigma
  \;:=\; \min\bigl(1-\epsilon,\;\sigma-C_\epsilon\bigr) \;>\; 0,
\end{equation}
where $(\epsilon,C_\epsilon)$ are the form-bound parameters of
\Cref{ass:form-bounded}\,(ii)---the discrete bound on $c_h^-$, valid
uniformly in $h$ as discussed in \Cref{rmk:kato-ch-valid}---and
$\sigma>C_\epsilon$. The argument here uses item~(ii) rather than the
continuous bound (i): as recorded in \Cref{ass:form-bounded}, the
inequality $(c_h^-u,u)\le(V^-u,u)$ holds Jensen-by-Jensen only for
piecewise-constant $u$, so the continuous form bound does not transfer
to $c_h^-$ for general $u\in H^1$, which is exactly why (ii) is stated
separately.

The \emph{perturbation ratio} $\varepsilon_h$ is any computable
quantity satisfying
\begin{equation}
\label{eq:eps-abstract}
  |((V-c_h)u,u)| \;\le\; \varepsilon_h\,\kappa_\sigma\,\|u\|_{H^1(\Omega)}^2
  \qquad\forall\,u\in H^1(\Omega).
\end{equation}
For the Coulomb potentials considered here, this perturbation form
converges to zero under the graded mesh assumptions used in
\Cref{app:graded-eps}: more precisely,
\[
  \sup_{\|u\|_{H^1(\Omega)}=1}|((V-c_h)u,u)| \;\longrightarrow\;0
  \qquad (h\to0).
\]
Thus the replacement of $V$ by its element average $c_h$ is not only
computable but also asymptotically harmless in the form topology
needed for the min--max argument. Two explicit choices for
$\varepsilon_h$, tailored respectively to numerical computation and
to convergence analysis on graded meshes, are given in
\S\ref{subsec:sobolev-constants}.

\begin{lemma}[Shifted perturbation bound]
\label{lem:perturb-sigma}
Under \Cref{ass:V,ass:kato-ch}, $\varepsilon_h\le 1$ (as in
\eqref{eq:eps-abstract}), and $\sigma\ge C_\epsilon+c_0$ for some
$c_0>0$,
\begin{equation}
\label{eq:perturb-sigma}
  (1-\varepsilon_h)(\mu_k+\sigma)
  \;\le\; \lambda_k+\sigma
  \;\le\; (1+\varepsilon_h)(\mu_k+\sigma).
\end{equation}
No $L^\infty$ control of $c_h^-$ (no $\gamma_h$) appears on either side.
\end{lemma}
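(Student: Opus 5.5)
The argument compares the two shifted quadratic forms
\[
  a^\sigma(u,u):=\|\nabla u\|^2+((V+\sigma)u,u),\qquad
  a_{c}^\sigma(u,u):=\|\nabla u\|^2+((c_h+\sigma)u,u),
\]
both on $H^1(\Omega)$ with Neumann boundary conditions. By \eqref{eq:sigma-shift-continuous}, their min--max values are exactly $\lambda_k+\sigma$ and $\mu_k+\sigma$. The plan is to prove the two-sided form inequality
\[
  (1-\varepsilon_h)\,a_c^\sigma(u,u)\le a^\sigma(u,u)\le(1+\varepsilon_h)\,a_c^\sigma(u,u)\qquad\forall u\in H^1(\Omega),
\]
and then apply the Courant--Fischer min--max principle over $k$-dimensional subspaces of the common form domain $H^1(\Omega)$. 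Multiplying a form by a positive constant scales all its min--max values by that constant. The $L^2$ mass is the same for both forms. Hence the form inequality gives \eqref{eq:perturb-sigma} directly, once we know the min--max values are the eigenvalues (below the essential spectrum, which is empty here since $\Omega$ is bounded and the forms are closed and bounded below).

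\emph{Step 1: coercivity.} By \Cref{lem:coercivity-sigma}, applied to $c_h$ with $\sigma>C_\epsilon$ (guaranteed by $\sigma\ge C_\epsilon+c_0$), we have
\[
  a_c^\sigma(u,u)\ge(1-\epsilon)\|\nabla u\|^2+(\sigma-C_\epsilon)\|u\|^2\ge\kappa_\sigma\|u\|_{H^1(\Omega)}^2,
\]
where $\kappa_\sigma$ is given by \eqref{eq:kappa-sigma}, and $\kappa_\sigma>0$ with $\kappa_\sigma\ge\min(1-\epsilon,c_0)$. This lemma uses only \Cref{ass:kato-ch}(ii) and never requires $\|c_h^-\|_{L^\infty}$. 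That is why no $\gamma_h$ enters the bound.

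\emph{Step 2: form comparison.} We write $a^\sigma(u,u)=a_c^\sigma(u,u)+((V-c_h)u,u)$. By \eqref{eq:eps-abstract} and Step 1,
\[
  |((V-c_h)u,u)|\le\varepsilon_h\kappa_\sigma\|u\|_{H^1}^2\le\varepsilon_h\,a_c^\sigma(u,u).
\]
This yields the sandwich displayed above. Since $\varepsilon_h\le1$, the lower factor $1-\varepsilon_h$ is nonnegative, so $a^\sigma$ is also bounded below, and its domain coincides with $H^1(\Omega)$ by \Cref{ass:V}.

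\emph{Step 3: min--max.} For every $k$-dimensional $S\subset H^1(\Omega)$, we take the maximum over $S$ of the Rayleigh quotients in the sandwich and then the minimum over $S$. Because $a_c^\sigma\ge0$ and $1\pm\varepsilon_h\ge0$, the multiplicative factors pass through the max and the min. This gives $(1-\varepsilon_h)(\mu_k+\sigma)\le\lambda_k+\sigma\le(1+\varepsilon_h)(\mu_k+\sigma)$.

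The main obstacle is the justification in Step 3. The quadratic form of $V$ must be defined and closed on all of $H^1(\Omega)$ with unbounded $V$, and the min--max values must coincide with the discrete eigenvalues $\lambda_k$ of the Neumann problem. I would handle this via KLMN, using \Cref{ass:form-bounded}(i) for $V^-$ and $V\in L^{p_0}(\Omega)$. Care is also needed that the comparison is made on the same space $H^1(\Omega)$ and that the positivity of $a_c^\sigma$ is used so that the scaling step preserves the ordering.
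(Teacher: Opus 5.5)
Your proposal is correct and follows the same route as the paper. The paper also introduces the averaged shifted form $a_h^\infty=a^\sigma-((V-c_h)\cdot,\cdot)$ (your $a_c^\sigma$), derives $a_h^\infty(u,u)\ge\kappa_\sigma\|u\|_{H^1}^2$ from \Cref{ass:kato-ch}(ii), turns \eqref{eq:eps-abstract} into the two-sided form sandwich, and concludes by min--max. Your extra justification that the form domain of $a^\sigma$ is all of $H^1(\Omega)$ and that the min--max values are the Neumann eigenvalues covers what the paper compresses into ``the standard test-space argument''; it supplements the paper's argument rather than changing it.
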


\begin{proof}
Introduce the shifted continuous form
\[
  a^\sigma(u,v):=a(u,v)+\sigma(u,v)
\]
and define, in one line, the shifted form obtained by replacing
$V$ with its cellwise averaged lower approximation $c_h$:
\begin{equation}
\label{eq:ah-infty-def}
  a_h^\infty(u,v)
  := a(u,v)-((V-c_h)u,v)+\sigma(u,v),
  \qquad u,v\in H^1(\Omega).
\end{equation}
Equivalently,
\begin{equation}
\label{eq:ah-infty-relation}
  a_h^\infty(u,v)
  = a^\sigma(u,v)-((V-c_h)u,v)
  =(\nabla u,\nabla v)+(c_h u,v)+\sigma(u,v),
\end{equation}
so $a_h^\infty$ is exactly the shifted form for the averaged
operator $-\Delta+c_h+\sigma$.  By \Cref{ass:kato-ch}---a direct form bound on the
\emph{discrete} negative part $c_h^-$ that holds uniformly in $h$
(\Cref{rmk:kato-ch-valid})---we have
$(c_h^-u,u)\le\epsilon\|\nabla u\|^2+C_\epsilon\|u\|^2$, hence
$(c_h u,u)\ge-(c_h^-u,u)\ge-\epsilon\|\nabla u\|^2-C_\epsilon\|u\|^2$,
so
\begin{equation}
\label{eq:a-h-infty-coercive}
  a_h^\infty(u,u)
  = \|\nabla u\|^2 + ((c_h+\sigma)u,u)
  \ge (1-\epsilon)\|\nabla u\|^2 + (\sigma-C_\epsilon)\|u\|^2
  \ge \kappa_\sigma\,\|u\|_{H^1}^{\,2},
\end{equation}
with $\kappa_\sigma$ from \eqref{eq:kappa-sigma}.
By the defining property \eqref{eq:eps-abstract} of $\varepsilon_h$
and \eqref{eq:a-h-infty-coercive},
\[
  |((V-c_h)u,u)|
  \;\le\; \varepsilon_h\,\kappa_\sigma\,\|u\|_{H^1}^{\,2}
  \;\le\; \varepsilon_h\,a_h^\infty(u,u).
\]
Using \eqref{eq:ah-infty-relation}, this gives the form comparison
\[
  (1-\varepsilon_h)\,a_h^\infty(u,u)
  \le a^\sigma(u,u)
  \le (1+\varepsilon_h)\,a_h^\infty(u,u).
\]
Application of the min-max principle to the shifted eigenvalues
$\lambda_k+\sigma$ and $\mu_k+\sigma$ then yields
\eqref{eq:perturb-sigma} by the standard test-space argument.
\end{proof}

\subsection{Explicit Graded-Mesh Bounds for \texorpdfstring{$\varepsilon_h$}{epsilon\_h}}
\label{subsec:sobolev-constants}

Let $\mathcal T_h$ be a simplicial mesh of $\Omega$ indexed by a
positive mesh-family parameter $h$, and set
$h_{\max}:=\max_K h_K$.  The parameter $h$ controls the grading law
below and is not assumed to equal $h_{\max}$.  For a single Coulomb
singularity at the origin,
define
\[
  r_K:=\min_{x\in K}|x|,
  \qquad
  \omega_0:=\bigcup_{\{K:\,0\in\overline K\}}K.
\]
For a singularity at $a$, the same definitions are used with
$|x|$ replaced by $|x-a|$; for a multi-centre potential they are
applied locally at each centre.

\begin{assumption}[Graded mesh near a Coulomb singularity]
\label{ass:graded-mesh}
There exist constants $\vartheta>0$ and $\beta>0$, independent of the
mesh level $h$, and a patch exponent $\gamma_{\rm p}>0$,
such that
\begin{itemize}
  \item[(G1)] \emph{Linear grading away from the singularity:}
    $h_K\le\vartheta\,h\,r_K$ for every $K$ with $0\notin\overline K$.
  \item[(G2)] \emph{Small singular patch:}
    $\omega_0\subset B_{\beta h^{\gamma_{\rm p}}}$.
\end{itemize}
\end{assumption}

The constants $\vartheta$ and $\beta$ quantify, respectively, the
linear grading away from the singularity and the size of the singular
patch.  In the rate estimates below we take
$\gamma_{\rm p}=2$ in three dimensions.  In two dimensions
$\gamma_{\rm p}$ is a mesh-design parameter; choosing
$\gamma_{\rm p}>2$ and a compatible Sobolev exponent gives the expected
$O(h^2)$ perturbation rate.  The numerical certification itself uses
the directly computable patch norm, so it does not require replacing
that norm by a separate asymptotic upper bound.

For $V(x)=|x|^{-1}$, the derivation in \Cref{app:graded-eps} uses the
cell-average cancellation identity \eqref{eq:Vchuident} and splits
$((V-c_h)u,u)$ into off-patch and patch contributions.  On off-patch
elements, the cancellation is combined with the local $L^1$ Poincar\'e
inequality and the mean-value bound $|\nabla |x|^{-1}|=|x|^{-2}$.
This gives the cellwise factor
\[
  \frac12\sup_{\{K:\,0\notin\overline K\}}\frac{h_K^2}{r_K^2}
\]
in \eqref{eq:middle-way}.  On the singular patch, H\"older--Sobolev is
applied with $(p,q)=(3/2,3)$ in three dimensions and with any
$p\in(1,2)$, $q=p/(p-1)$ in two dimensions; Jensen's inequality
controls the element averages $c_h$.  The required bounded-domain
constant $C_6$ is recorded in \eqref{eq:C6-box-line} for boxes and in
\Cref{rmk:c6-ball} for balls.  The corresponding two-dimensional
$H^1\hookrightarrow L^8$ constants are recorded in
\Cref{rmk:rect-L8}.

Retaining the off-patch and patch contributions separately gives the
directly computable estimates \eqref{eq:direct-est-c-grd} in three
dimensions and \eqref{eq:direct-est-c-grd-2d} in two dimensions.
Dividing either right-hand side by $\kappa_\sigma$ gives an admissible
value of $\varepsilon_h$ in \eqref{eq:eps-abstract}.  Under
\Cref{ass:graded-mesh}, these estimates imply the explicit rates
\eqref{eq:graded-eps-3d} and \eqref{eq:graded-eps-2d}.  In particular,
choosing $\gamma_{\rm p}=2$ in three dimensions and
$\gamma_{\rm p}(2/p-1)\ge2$ in two dimensions gives
\begin{equation}
\label{eq:eps-graded-rate}
  \varepsilon_h
  \;\le\;
  \begin{cases}
    C_{\mathrm{grd}}\,h^2/\kappa_\sigma,
      & N=3,\quad C_{\mathrm{grd}}\text{ as in }\eqref{eq:Cgrd-3d},\\[2mm]
    C_{\mathrm{grd}}^{2D}\,h^2/\kappa_\sigma,
      & N=2,\quad C_{\mathrm{grd}}^{2D}\text{ as in }\eqref{eq:Cgrd-2d}.
  \end{cases}
\end{equation}
Thus $\varepsilon_h\to0$ as $h\to0$, with an expected second-order
rate when the singular patch is shrunk according to the above rule.

\subsection{Main Theorem}
\label{subsec:main-theorem}

\paragraph{Preparation for the theorem}
Let $\Omega=D(R)$ be the truncated domain and let $\lambda_k$ denote
the $k$-th Neumann eigenvalue of $-\Delta+V$ on $\Omega$
(i.e.\ $\lambda_k=\lambda_k^{R,N}$ in the notation of
\S\ref{subsec:truncation}).
Let $V \in L^{p_0}(\Omega)$ with $p_0 > N/2$ satisfy
\Cref{ass:V,ass:form-bounded}.  Choose a fixed shift $\sigma>0$ such
that
\begin{equation}
\label{eq:main-shift-assumptions}
  a_h^\sigma \text{ is positive definite on } V_h^{\ECR},
  \qquad
  \sigma\ge C_\epsilon+c_0
  \quad\text{for some }c_0>0 .
\end{equation}
The first condition is checked a posteriori by
$\mu_{1,h}^\sigma>0$, and the second gives the coercivity
\eqref{eq:a-h-infty-coercive} through \Cref{ass:kato-ch}.
Let $\varepsilon_h$ satisfy \eqref{eq:eps-abstract}; in applications it
is obtained from the computable estimates in
\Cref{subsec:sobolev-constants}.  Finally, let
$L_k^{\mu,\sigma}$ be the CECR lower bound in
\eqref{eq:convergent-LB-sigma}, and let
$\lambda_{k,h}^{\CG}$ be the conforming Galerkin Ritz value on the same
mesh.

\begin{theorem}[Fixed-shift two-sided bound on $\Omega$]
\label{thm:main}
Assume the preparation above and suppose that $\varepsilon_h\le 1$.
Then
\begin{equation}
\label{eq:main-bound}
  \underbrace{(1-\varepsilon_h)(L_k^{\mu,\sigma}+\sigma)
    - \sigma}_{=:\,L_k}
  \;\le\; \lambda_k
  \;\le\;
  \underbrace{\lambda_{k,h}^{\CG}}_{=:\,U_k}.
\end{equation}
\end{theorem}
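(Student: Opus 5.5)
The proof splits into the two inequalities in \eqref{eq:main-bound}, which are handled independently.

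\emph{Lower bound.} First, \Cref{thm:cecr-lb-convergent} applies to the shifted problem under the preparation hypotheses: $c_h\in\mathbb{P}^0(\mathcal{T}^h)$, \Cref{ass:kato-ch} holds, $\sigma>C_\epsilon$, and $a_h^\sigma$ is positive definite on $V_h^{\ECR}$. Positive definiteness of $a_h^\sigma$ on $H^1(\Omega)$ is not checked separately; it follows from \Cref{lem:coercivity-sigma}, since $\sigma-C_\epsilon\ge c_0>0$. The theorem gives $\mu_k\ge L_k^{\mu,\sigma}$, in the form \eqref{eq:convergent-LB-sigma} with the Payne--Weinberger surrogate. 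Next, invoke \Cref{lem:perturb-sigma}, whose hypotheses ($\varepsilon_h\le1$ and $\sigma\ge C_\epsilon+c_0$) are exactly those assumed. Its left inequality gives $\lambda_k+\sigma\ge(1-\varepsilon_h)(\mu_k+\sigma)$. Because $1-\varepsilon_h\ge 0$, multiplying the inequality $\mu_k+\sigma\ge L_k^{\mu,\sigma}+\sigma$ by $1-\varepsilon_h$ preserves its direction. Chaining the two and subtracting $\sigma$ gives $L_k\le\lambda_k$. The sign condition $\varepsilon_h\le1$ is the only delicate point here, and it is assumed in the statement.

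\emph{Upper bound.} Here $\lambda_k$ denotes the Neumann eigenvalue $\lambda_k^{R,N}$ on $\Omega=D(R)$. The conforming space used for $\lambda_{k,h}^{\CG}$ is a $P_1$ subspace of $H^1_0(\Omega)$, which is contained in $H^1(\Omega)$. By the min--max principle for the Neumann form $a$ on $H^1(\Omega)$, any $k$-dimensional subspace of $H^1(\Omega)$ yields an upper bound, so $\lambda_k^{R,N}\le\lambda_k^{R,D}\le\lambda_{k,h}^{\CG}$. This is the same chain as \eqref{eq:two-sided-ub}, with the additional Neumann--Dirichlet comparison. Well-definedness of $a(u,u)$ on $H^1(\Omega)$, given the singular $V$, follows from the form bound of \Cref{ass:form-bounded}\,(i).

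\emph{Main obstacle.} The step that needs the most care is matching the hypotheses of \Cref{thm:cecr-lb-convergent} and \Cref{lem:perturb-sigma}, in particular that both use item (ii) of \Cref{ass:form-bounded} with the same $(\epsilon,C_\epsilon)$. It is also worth noting that the lower bound requires only $\mu_k+\sigma\ge L_k^{\mu,\sigma}+\sigma$, not positivity of $L_k^{\mu,\sigma}+\sigma$, so no further case distinction is needed. Finally, combining with \Cref{lem:neumann-lb} and \eqref{eq:dirichlet-upper} extends the bracket to the full-space eigenvalue, but that is outside the statement as posed on $\Omega$.
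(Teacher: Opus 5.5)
Your proposal is correct and follows the paper's proof. For the lower bound, it chains $\mu_k\ge L_k^{\mu,\sigma}$ from \Cref{thm:cecr-lb-convergent} with the left inequality of \Cref{lem:perturb-sigma}, multiplied through by $1-\varepsilon_h\ge 0$; the upper bound is the standard Galerkin min--max. Your extra checks are valid details that the paper leaves implicit: positive definiteness of the shifted form on $H^1(\Omega)$ via \Cref{lem:coercivity-sigma}, and the inclusion $H^1_0(\Omega)\subset H^1(\Omega)$, which lets the Dirichlet $P_1$ Ritz value bound the Neumann eigenvalue.
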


\begin{proof}
Since $1-\varepsilon_h\ge 0$, the lower inequality of
\Cref{lem:perturb-sigma} together with $\mu_k\ge L_k^{\mu,\sigma}$
from \eqref{eq:convergent-LB-sigma} (a consequence of
\Cref{thm:cecr-lb-convergent}) yields
$(1-\varepsilon_h)(L_k^{\mu,\sigma}+\sigma) \le
\lambda_k+\sigma$, which is \eqref{eq:main-bound} after subtracting
$\sigma$.  The upper bound is the standard Galerkin min-max.
\end{proof}

\begin{remark}[Interpretation of \Cref{thm:main}]
\label{rmk:main-theorem-interpretation}
The theorem is stated for the Neumann eigenvalue $\lambda_k^{R,N}$ on
the truncated domain.  By \Cref{lem:neumann-lb},
$\lambda_k^{R,N}\le\lambda_k^{\R^N}$ whenever the confinement condition
holds; hence $L_k$ is also a certified lower bound for the corresponding
full-space Schr\"odinger eigenvalue.  The truncation gap
$\lambda_k^{\R^N}-\lambda_k^{R,N}\ge0$ is discussed in
\S\ref{subsec:truncation-error}.  Both $L_k$ and
$U_k=\lambda_{k,h}^{\CG}$ are computable from the mesh data, the CECR
eigenvalues, the conforming Galerkin solve, and $\varepsilon_h$; no
$L^\infty$ shift $\gamma_h=\|c_h^-\|_{L^\infty}$ enters the final
enclosure.
\end{remark}

\subsection{Convergence under Mesh Refinement}
\label{subsec:convergence}

Under \Cref{ass:V,ass:form-bounded} and the fixed-shift
hypotheses of \Cref{thm:main}, the perturbation parameter satisfies
$\varepsilon_h\to 0$ as the mesh-family parameter $h\to 0$.
On linearly graded meshes satisfying \Cref{ass:graded-mesh}, the explicit rates
are $\varepsilon_h=O(h^2/\kappa_\sigma)$ in both dimensions, with the
two-dimensional patch exponent chosen as in \Cref{rmk:2d-rate}; see
\eqref{eq:eps-graded-rate}.
The conforming Galerkin upper bound
$U_k=\lambda_{k,h}^{\CG}$ converges to $\lambda_k$ at the standard
rate for shape-regular families adapted to $V$. The convergence of
the CECR lower-bound component $L_k^{\mu,\sigma}$ is guaranteed by
\Cref{thm:cecr-lb-convergent}: the correction factor
$A_h=\Gamma_h/(1{-}\epsilon)\to 0$ and
$(C_h^{\mathrm{PW}})^2\mu_{k,h}^\sigma\to 0$, so
$L_k^{\mu,\sigma}\to\mu_k$ and consequently $L_k\to\lambda_k$.

\begin{remark}[Limitation of the $\gamma_h$-shift]
\label{rmk:sigma-equals-gamma}
The present framework reduces to \Cref{rmk:gamma-h-limitation} if one
sets $\sigma=\gamma_h$ and uses the purely kinetic constant $C_h$:
the resulting lower bound is
$L_k^{\gamma_h}:=
(\mu_{k,h}+\gamma_h)/\bigl(1+(\mu_{k,h}+\gamma_h)C_h^{\,2}\bigr)
-\gamma_h$.  Setting $A:=\mu_{k,h}+\gamma_h$, the gap to the Ritz
value $\mu_{k,h}$ admits the exact expression
\begin{equation*}
  \mu_{k,h}-L_k^{\gamma_h}
  \;=\; A-\frac{A}{1+A\,C_h^{\,2}}
  \;=\; \frac{A^{\,2}\,C_h^{\,2}}{1+A\,C_h^{\,2}},
\end{equation*}
so that for $\gamma_h\gg\mu_{k,h}$ the leading order is
$A^{\,2}C_h^{\,2}\approx\gamma_h^{\,2}C_h^{\,2}$---the factor
$\gamma_h$ appears twice (once from the numerator shift, once from
the $A$ inside the Rayleigh-quotient gap). For
$V\sim-1/|x-x_0|$ one has $\gamma_h=\Theta(1/h_{\min})$ and
$C_h=\Theta(h_{\max})$, so the gap satisfies
$\gamma_h^{\,2}C_h^{\,2}=\Theta(h_{\max}^{\,2}/h_{\min}^{\,2})$,
which does not vanish on uniform meshes and \emph{worsens} under
radial grading $h_K\propto|x|^\theta$, $\theta>0$ (since
$h_{\min}\to 0$ faster than $h_{\max}$). The classical
$\gamma_h$-shift therefore fails to produce a convergent lower
bound for Coulomb singularities in both 2D and 3D.
\end{remark}

\section{Two-Centered Coulomb Potential}
\label{sec:twocenter}

\subsection{Problem Formulation}
\label{subsec:twocenter-formulation}

Let $a_1, a_2 \in \R^N$ with $|a_1 - a_2| = R_0 > 0$ (internuclear
distance) and $Z_1, Z_2 > 0$ (nuclear charges).  The two-centered
Coulomb potential is
\begin{equation}
\label{eq:two-center-potential}
  V(x) = -\frac{Z_1}{|x - a_1|} - \frac{Z_2}{|x - a_2|}.
\end{equation}
In 3D, this models the Born--Oppenheimer Hamiltonian of a diatomic
molecule with fixed nuclei at $a_1$, $a_2$ and a single electron;
the most studied case is $\mathrm{H}_2^+$ ($N=3$, $Z_1=Z_2=1$).
The potential \eqref{eq:two-center-potential} satisfies
$V \le 0$, $V \in L^p_{\rm loc}(\R^N)$ for all $p < N$, and
$V(x) \to 0$ as $|x| \to \infty$. (Note: $V \notin L^p(\R^N)$ for any
$p\le N$ because the $|x|^{-1}$ tail fails $L^p$-integrability at
infinity; on the truncated domain $D(R)$ however
$V \in L^p(D(R))$ for every $p<N$.)
By Kato--Rellich, $H = -\Delta + V$ is self-adjoint on $H^2(\R^N)$ and
bounded below; $\sigma_{\rm ess}(H) = [0,\infty)$, with infinitely
many negative discrete eigenvalues accumulating at $0$.

\subsection{Integrability and Admissible Shift}
\label{subsec:twocenter-params}

\paragraph{Exact eigenvalues (reference values)}
For a single centre ($Z_2=0$), the operator $H=-\Delta-Z_1/r$ on $\R^N$
has exact ground state:
\begin{equation}
\label{eq:exact-gs}
  \lambda_1 =
  \begin{cases}
    -Z_1^2 & N=2 \quad (\text{eigenfunction } e^{-Z_1 r}), \\
    -Z_1^2/4 & N=3 \quad (\text{standard hydrogen atom}).
  \end{cases}
\end{equation}
For two centres ($Z_1=Z_2=1$, 3D), no closed
form is available.  The rescaling $x=2r$ maps our operator
$-\Delta-Z/|x{-}a_i|$ with internuclear distance $d=|a_1-a_2|$ to
$\tfrac{1}{2}\bigl(-\tfrac{1}{2}\nabla_r^{\,2}-Z/|r-R_i|\bigr)$
with physical internuclear distance $R_0=d/2$; hence
$\lambda_k(-\Delta+V;\,d)=E_k(-\tfrac{1}{2}\nabla^2+V;\,R_0{=}d/2)/2$.
We take $d=4\,\mathrm{bohr}$ (nuclei at $a_{1,2}=(\mp 2,0,0)$),
corresponding to $R_0=2\,\mathrm{bohr}$, the equilibrium internuclear
distance of $\mathrm{H}_2^+$.  The electronic ground-state energy at
equilibrium is $E_{\rm el}(R_0{=}2)=-1.1026\,\mathrm{Hartree}$
\cite{Bates1953,MadsenPeek1971}, giving
$\lambda_1= E_{\rm el}(2)/2=-0.5513$
in our $-\Delta$ convention.

\paragraph{Choice of \texorpdfstring{$p_0$}{p0}}
Since each $Z_i/|x-a_i| \in L^p$ for $p < N$, the potential
\eqref{eq:two-center-potential} satisfies $V \in L^{p_0}(D(R))$
for any $p_0 < N$.  We take
\begin{itemize}
  \item $N=2$: $p_0 = 4/3$ in the reported computations; for the
    theoretical patch-radius rate in \Cref{rmk:2d-rate}, any
    $p_0\in(1,2)$ may be used,
  \item $N=3$: $p_0 = 3/2$ (so $q_0=3$, the critical Sobolev exponent).
\end{itemize}

\paragraph{Admissible shift \texorpdfstring{$\sigma$}{sigma}}
The shift $\sigma$ is chosen separately for each numerical family
according to the available form-bound constant $C_\epsilon$; the
specific values are reported in the parameter paragraphs of
\S\ref{sec:numerics}. No global Sobolev-norm smallness condition on
$V^-$ is required. Positive definiteness is verified a posteriori on
each mesh by the equivalent check $\mu_{1,h}^\sigma>0$ (minimum CECR
Ritz value for potential $c_h+\sigma$ is positive). This holds on all
tested meshes.

\paragraph{Perturbation \texorpdfstring{$\varepsilon_h$}{epsilon\_h}}
The admissible perturbation bounds satisfying \eqref{eq:eps-abstract}
are obtained as in \S\ref{subsec:sobolev-constants}.  For
\emph{numerical computation}, we use the mesh-local estimates
\eqref{eq:direct-est-c-grd} and \eqref{eq:direct-est-c-grd-2d}, divided
by $\kappa_\sigma=\min(1-\epsilon,\sigma-C_\epsilon)$ in all
experiments.  For two-centre potentials, the patch and off-patch
contributions are summed over the two singularities.  For
\emph{convergence analysis} on linearly graded meshes satisfying
\Cref{ass:graded-mesh}, the single-centre estimates
\eqref{eq:direct-est-c-grd} and \eqref{eq:direct-est-c-grd-2d} apply locally at
each Coulomb centre and give the same rates:
\begin{equation}
\label{eq:eps-h-twocenter}
  \varepsilon_h \;\le\; \frac{C_{\mathrm{grd}}\,h^2}{\kappa_\sigma},
\end{equation}
provided the two-dimensional patch exponent and Sobolev exponent are
chosen as in \Cref{rmk:2d-rate}.
The graded meshes used in \S\ref{sec:numerics} satisfy
\Cref{ass:graded-mesh} near each singularity.

\subsection{Symmetry and Mesh Design}
\label{subsec:twocenter-symmetry}

\paragraph{Symmetric case}
When $Z_1 = Z_2 =: Z$ and $a_1 = -a_2 =: \frac{R_0}{2}\,e_1$
(nuclei on the $x_1$-axis, symmetric about the origin), the
Hamiltonian commutes with the reflection $x \mapsto -x$; eigenfunctions
are either \emph{gerade} ($\sigma_g$, even) or \emph{ungerade}
($\sigma_u$, odd), which can be exploited to halve the computational
domain.  The present implementation does not use this symmetry.

\paragraph{Mesh design near singularities}
The Coulomb singularities at $a_1$, $a_2$ call for local refinement.
We use linearly graded meshes satisfying \Cref{ass:graded-mesh} near
each singularity, which give the convergence rate
\eqref{eq:eps-h-twocenter} for the perturbation ratio.
Convergence of $L_k^{\mu,\sigma}$ itself (\Cref{thm:cecr-lb-convergent})
holds on any shape-regular family without any additional grading
hypothesis.

\subsection{Truncation Error for the Two-Centered Problem}
\label{subsec:twocenter-truncation}

For the purely attractive potential \eqref{eq:two-center-potential},
$V(x) \to 0$ as $|x| \to \infty$ and all discrete eigenvalues satisfy
$\lambda_k < 0$.  By Agmon's theory \cite{Agmon1982}, eigenfunctions
decay exponentially with rate $\sqrt{|\lambda_k|}$, so the truncation
error \emph{scales} as $\delta_k(R)\sim e^{-2\sqrt{|\lambda_k|}\,R}$
as $R\to\infty$. We emphasise that this scaling is heuristic: Agmon's
theorem provides exponential decay of $u_k$ but the constant relating
this to $\delta_k(R)$ depends on un-quantified prefactors; we use the
scaling here only to motivate the choice of $R$. For the
$\mathrm{H}_2^+$ ground state in 3D, $\lambda_1\approx-0.5513$ gives
decay rate $\sqrt{|\lambda_1|}\approx 0.742$ and the heuristic
$e^{-2\cdot 0.742\cdot 8}\approx 7\times 10^{-6}$ at $R=8$ suggests
the truncation error is well below the FEM approximation error on the
meshes used. An \emph{explicit, certified} computable upper bound on
$\delta_k(R)$ with all constants tracked is left for a companion paper.

\subsection{Summary: Two-Sided Bounds for Two-Centered Coulomb}
\label{subsec:twocenter-summary}

Combining the above with the main theorem (\Cref{thm:main}):

\begin{corollary}[Two-sided bounds for $\mathrm{H}_2^+$-type problems]
\label{cor:twocenter-bounds}
Let $V$ be as in \eqref{eq:two-center-potential},
$\Omega=D(R)$ with $R$ chosen so that $\delta_k(R)$ is negligible, and let $\sigma$ be the
fixed, mesh-independent shift of \S\ref{subsec:twocenter-params}.
With $L_k^{\mu,\sigma}$ from \Cref{thm:cecr-lb-convergent} and
$\varepsilon_h$ satisfying \eqref{eq:eps-abstract} (for instance,
the computable mesh-local bounds \eqref{eq:direct-est-c-grd} and
\eqref{eq:direct-est-c-grd-2d}, or the graded-rate bound
\eqref{eq:eps-graded-rate})
computed on a mesh $\mathcal{T}^h$ satisfying $\varepsilon_h\le 1$,
\begin{equation}
\label{eq:twocenter-final-bound}
  \underbrace{(1-\varepsilon_h)(L_k^{\mu,\sigma}+\sigma)
  - \sigma}_{=:\,L_k}
  \;\le\; \lambda_k^R
  \;\le\; \lambda_k
  \;\le\;
  \underbrace{\lambda_{k,h}^{\CG}}_{=:\,U_k},
\end{equation}
where $\lambda_k^R$ is the $k$-th eigenvalue on $D(R)$.
Whenever the confinement condition
$\sigma_{\mathrm{ext}}(D(R)) > \lambda_k$ of
\Cref{lem:neumann-lb} additionally holds---as verified for
specific $(k,R)$ pairs in \Cref{rmk:confinement-verify}---one
also has the Neumann-truncation sandwich
$\lambda_k^{R,N} \le \lambda_k \le \lambda_k^R$,
with $\delta_k(R) := \lambda_k^R - \lambda_k$ exponentially
small in $R$.  All quantities in
$L_k$ are computable from the mesh data ($C_h^{\mathrm{PW}}$,
$\Gamma_h$, $\varepsilon_h$), the shifted CECR Ritz values $\mu_{k,h}^\sigma$,
and the fixed shift $\sigma$.
\end{corollary}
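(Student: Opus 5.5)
The plan is to obtain \Cref{cor:twocenter-bounds} by checking that the two-centre potential \eqref{eq:two-center-potential} on $\Omega=D(R)$ meets every hypothesis of \Cref{thm:main}, and then chaining that theorem with the truncation inequalities of \S\ref{subsec:truncation}.

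\textbf{Step 1: hypotheses of \Cref{thm:main}.}
\begin{enumerate}[label=(\alph*)]
\item \Cref{ass:V} holds. Here $V^+=0$, and $V^-\in L^{p_0}(D(R))$ for the chosen $p_0\in(N/2,N)$ (\S\ref{subsec:twocenter-params}). Also $V^-\to0$ at infinity.
\item The continuous form bound \Cref{ass:form-bounded}(i) follows from \eqref{eq:coulomb-rho}. Apply it once per centre and sum, with $\rho\simeq\epsilon/(Z_1+Z_2)$.
\item For the discrete bound (ii) we rely on the graded-mesh reduction in \Cref{rmk:kato-ch-valid}. The key point is that both nuclei are mesh vertices.
\item We may alternatively take $C_\epsilon=-\eta$ from the Rayleigh-quotient certification of \Cref{rmk:ceps-sharp}.
\item The shift $\sigma\ge C_\epsilon+c_0$ is fixed as in \S\ref{subsec:twocenter-params}. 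Positive definiteness of $a_h^\sigma$ on $V_h^{\ECR}$ is the a-posteriori check $\mu_{1,h}^\sigma>0$.
\item On $H^1(\Omega)$, positive definiteness of $a_h^\sigma$ follows from the coercivity \eqref{eq:coercivity}, since $\sigma>C_\epsilon$. This makes \Cref{thm:cecr-lb-convergent} applicable, giving $\mu_k\ge L_k^{\mu,\sigma}$.
\item For \eqref{eq:eps-abstract}, sum the single-centre patch and off-patch contributions \eqref{eq:direct-est-c-grd} and \eqref{eq:direct-est-c-grd-2d} over the two centres. Each is localised to a neighbourhood of its own nucleus. The triangle inequality on $|((V-c_h)u,u)|$ then gives an admissible $\varepsilon_h$ after dividing by $\kappa_\sigma$.
\item The condition $\varepsilon_h\le1$ is assumed in the statement.
\end{enumerate}

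\textbf{Step 2: first inequality.} \Cref{thm:main} gives $L_k\le\lambda_k^{R,N}$. This is the first inequality in \eqref{eq:twocenter-final-bound}, where $\lambda_k^R$ on the lower side is read as the Neumann eigenvalue on $D(R)$, which is the object \Cref{thm:main} controls.

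\textbf{Step 3: upper inequalities.} The Dirichlet truncation \eqref{eq:dirichlet-upper} and the Galerkin bound \eqref{eq:two-sided-ub} give $\lambda_k\le\lambda_k^{R,D}\le\lambda_{k,h}^{\CG}=U_k$.

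\textbf{Step 4: middle inequality and sandwich.} The link $\lambda_k^{R,N}\le\lambda_k$ is exactly \Cref{lem:neumann-lb}. Its confinement hypothesis $\sigma_{\mathrm{ext}}(D(R))>\lambda_k$ is verified a posteriori from an upper bound $\bar\lambda_k$, as in \Cref{rmk:confinement-verify} and \Cref{tab:confinement-check}. This supplies the sandwich $\lambda_k^{R,N}\le\lambda_k\le\lambda_k^{R,D}$, where $\delta_k(R)=\lambda_k^{R,D}-\lambda_k\ge0$ is the exponentially small quantity discussed in \S\ref{subsec:twocenter-truncation}.

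\textbf{Step 5: computability.} Finally we note that every ingredient of $L_k$ is computed from mesh data: $C_h^{\mathrm{PW}}$, $\Gamma_h$, $\varepsilon_h$, $\mu_{k,h}^\sigma$, and the fixed $\sigma$.

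\textbf{Main obstacle.} The only non-bookkeeping step is the discrete form bound \Cref{ass:form-bounded}(ii) for two centres simultaneously. Where the patches around $a_1$ and $a_2$ are disjoint, the local comparison of $c_h^-|_K$ with $V^-$ applies per centre, because $|a_1-a_2|=R_0>0$ and $h$ is small. On intermediate elements, $c_h^-$ is bounded by the sum of the single-centre averages; this holds because the average is linear and both terms are nonnegative. Each summand is then compared with its own Coulomb form, and the resulting constants are added. If that reduction proves too lossy, the fallback is the Rayleigh-quotient certificate for $C_\epsilon$, which needs no such comparison.
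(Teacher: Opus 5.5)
Your proposal is correct and is essentially the paper's own derivation, which the paper gives only as ``combining the above with \Cref{thm:main}'': \Cref{thm:main} on $\Omega=D(R)$ gives $L_k\le\lambda_k^{R,N}$, \Cref{lem:neumann-lb} supplies $\lambda_k^{R,N}\le\lambda_k$, and \eqref{eq:dirichlet-upper} with \eqref{eq:two-sided-ub} gives the upper side; you rightly read $\lambda_k^R$ on the lower side as $\lambda_k^{R,N}$ and make the middle inequality of \eqref{eq:twocenter-final-bound} conditional on confinement, which it genuinely needs (without it, $\lambda_k^{R,N}>0>\lambda_k$ for large $k$). One small correction to your fallback remark: the Rayleigh-quotient certificate of \Cref{app:ceps-eta} is not comparison-free, because applying \Cref{thm:cecr-lb-convergent} to $q_h=-c_h^-/\epsilon$ already requires a preliminary discrete form bound \eqref{eq:appC-prelim-form-bound} with $\alpha<\epsilon$, and in practice that bound comes from the same local comparison in \Cref{rmk:kato-ch-valid}.
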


\section{Numerical Experiments}
\label{sec:numerics}

We report numerical experiments on four problems with Coulomb-type
potentials: the 2D single-centre $V=-1/|x|$, the 2D two-centre
$V=-1/|x-a_1|-1/|x-a_2|$, the 3D hydrogen atom ($V=-1/|x|$), and
the 3D $\mathrm{H}_2^+$ molecular ion
($V=-1/|x-a_1|-1/|x-a_2|$).
For all four problems, we compute the explicit CECR lower bound
$L_k^{\mu,\sigma}$ of \Cref{thm:cecr-lb-convergent} with explicit
values of $C_h^{\mathrm{PW}}$, $\Gamma_h$, $A_h$, confirming that
the CECR gap $(\mu_{k,h}^\sigma{-}\sigma)-L_k^{\mu,\sigma}$ closes at
rate $O(h^2)$ on graded meshes (both 2D and 3D);
the certified $\lambda_k$-bound tightens at $O(h^2)$ in 3D and
$O(h)$ in 2D (\Cref{rmk:2d-rate}).
Conforming $P_1$ Galerkin upper bounds $\lambda_{k,h}^{\CG}$ are
computed on the 3D meshes for comparison.

\subsection{Implementation}
\label{subsec:implementation}

\paragraph{Pipeline}
For each mesh level, the same pipeline is
executed:
\begin{enumerate}[label=(\roman*)]
  \item The piecewise-constant element average $c_h|_K=|K|^{-1}\!\int_K V$
    is computed by a closed-form per-element polar quadrature for the
    2D Coulomb and a Duffy-transform tetrahedral quadrature for the
    3D Coulomb, so that $c_h|_K$ is accurate to machine precision
    for elements containing a singularity.
  \item The conforming $P_1$ Galerkin matrices for $-\Delta+V$ on the
    same mesh are assembled using the same high-accuracy quadrature for
    the Coulomb entries (with a positivising shift $\sigma_{\CG}$ used
    only as an eigensolver convenience).
  \item The shifted CECR Ritz values $\mu_{k,h}^\sigma$ of
    $-\Delta+(c_h+\sigma)$ are obtained by shift-invert Arnoldi
    (\texttt{eigs}); the reported column $\mu_{k,h}^\sigma{-}\sigma$
    satisfies $\mu_{k,h}^\sigma{-}\sigma\le\mu_{k,h}$ with gap
    $O(\sigma h^2)$ in the nominal mesh parameter.
    The a-posteriori admissibility check $\mu_{1,h}^\sigma>0$ is read
    directly from the smallest Ritz value of the shifted CECR solve;
    no additional eigenvalue computation is required.
  \item The conforming Galerkin eigenvalues $\lambda_{k,h}^{\CG}$
    are obtained in the same way from the $P_1$ system.
\end{enumerate}

\paragraph{Form-bound constants}
Two types of estimates for $C_\epsilon$ are used.  The first is a
purely analytic estimate obtained from explicit Hardy, H\"older, and
Sobolev-type inequalities; the concrete constants used here are
recorded in \Cref{app:ceps-analytic}.  The second is the sharper
certified computation of the optimal form constant described in
\Cref{app:ceps-eta}.  We keep both in the numerical section: the
analytic constants give a fully explicit baseline, while the optimized
constants show how strongly the final enclosure depends on a sharp
choice of $C_\epsilon$ and hence on the admissible shift
$\sigma>C_\epsilon$.

\paragraph{Mesh families}
Meshes are generated by distance-graded Delaunay triangulation (2D)
or tetrahedralisation (3D), with all nuclei $a_i$ enforced as exact
mesh vertices. The 2D computations use the linear grading stated in
\Cref{subsec:num-coulomb-2d,subsec:num-twocenter-2d}. The reported
3D computations use a nucleus patch of radius $O(h^2)$ together with
distance-based refinement away from the singularities; the certified
quantities in the tables are evaluated directly from the resulting
meshes, rather than inferred from an asymptotic grading formula.

\subsection{Single Coulomb Potential, 2D}
\label{subsec:num-coulomb-2d}

\paragraph{Setup}
$V(x) = -1/|x|$ on $Q=[-5,5]^2\subset\R^2$, Neumann BC for the CECR lower
bound (ECR space, natural boundary conditions; see \Cref{rmk:bc-convention}),
Dirichlet BC for the $\mathrm{P}_1$ upper bound.
The operator is $H = -\Delta - 1/|x|$, with exact eigenvalues
$\lambda_n = -1/(2n-1)^2$ on $\R^2$:
$\lambda_1 = -1$, $\lambda_2 = \lambda_3 = -1/9 \approx -0.1111$.
The mesh is a linearly-graded Delaunay triangulation
with the singularity at the origin enforced as an exact mesh vertex.
The grading law has the form $h_K\le\vartheta h r$, with $r=|x|$ and
$h$ the nominal mesh parameter.
The first-ring radius $r_0$ is chosen as the \emph{optimal-patch radius}:
it minimises $d_h(\rho)=c_0(\rho)+c_*(\rho)$, the sum of the patch and
off-patch contributions to $\varepsilon_h\kappa_\sigma$ (see
\Cref{rmk:2d-rate}).
This gives $\varepsilon_h=d_h^{\mathrm{opt}}/\kappa_\sigma=O(h^2)$,
in contrast to the fixed $r_0=\beta h^2$ (fixed $\beta$) approach
which gives $\varepsilon_h=O(h)$ in 2D.
\Cref{fig:mesh-case7} shows a sample mesh at level $h=0.4$.

\begin{figure}[htbp]
\centering
\includegraphics[width=0.85\textwidth]{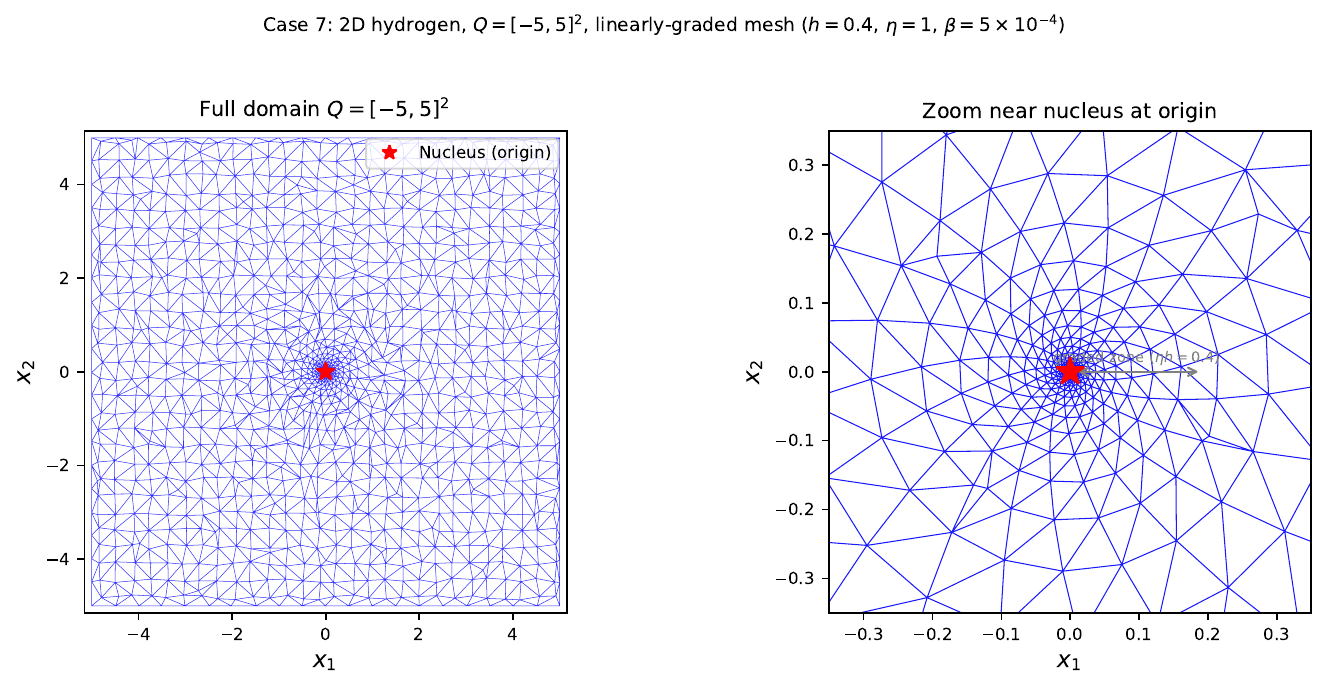}
\caption{Linearly-graded Delaunay triangulation for Case~7
(2D hydrogen, $Q=[-5,5]^2$, $h=0.4$, $\vartheta=1$, optimal-patch radius).
Left: full domain; right: zoom near the nucleus at the origin (red star).
The grading law $h_K\le\vartheta h r$ concentrates elements near the singularity;
the first-ring radius is the optimal-patch radius $\rho_{\rm opt}$ that minimises $d_h(\rho)$
(see \Cref{rmk:2d-rate}).}
\label{fig:mesh-case7}
\end{figure}

\paragraph{Parameters}
$\sigma=12.5$, $\epsilon=0.55$, $C_\epsilon=12$,
$\kappa_\sigma=\min(1-\epsilon,\,\sigma-C_\epsilon)=0.45$,
$S_8({[-5,5]}^2)\le1.466$.
The form-bound constants $\epsilon=0.55$ and $C_\epsilon=12$ are taken from the
explicit estimate $\int_Q c_h^-\,u^2\,dx \le 0.55\|\nabla u\|^2+12\|u\|^2$,
which requires $\sigma>C_\epsilon=12$; we take $\sigma=12.5$.
Positive-definiteness of $a_h^\sigma$ is confirmed by $\mu_{1,h}^\sigma>0$.
The Payne--Weinberger constant is $C_h^{\mathrm{PW}}=h_{\max}/\pi$.
The perturbation parameter is $\varepsilon_h=d_h^{\mathrm{opt}}/\kappa_\sigma$,
where
\[
  d_h^{\mathrm{opt}}
  =
  \min_{\rho>0}\{c_0(\rho)+c_*(\rho)\}.
\]
Here \(c_0(\rho)\) and \(c_*(\rho)\) are respectively the patch and off-patch
contributions in the direct estimate \eqref{eq:direct-est-c-grd-2d}.

\Cref{tab:coulomb-2d-cecr} reports the shifted Ritz value $\mu_{k,h}^\sigma{-}\sigma$,
the explicit lower bound $L_k^{\mu,\sigma}$ of \eqref{eq:cecr-lb-convergent},
and the certified corrected bound $L_k$ and $\mathrm{P}_1$ upper bound
$\lambda_{k,h}^{P_1}$ for $k=1,2,3$.

\begin{table}[htbp]
\caption{2D hydrogen ($V=-1/|x|$) on $Q=[-5,5]^2$,
Neumann BC (CECR lower bounds) / Dirichlet BC ($\mathrm{P}_1$ upper bounds),
linearly-graded Delaunay mesh with optimal-patch radius
(see \Cref{subsec:num-coulomb-2d}),
$\sigma=12.5$, $\epsilon=0.55$, $C_\epsilon=12$, $\kappa_\sigma=0.45$:
shifted Ritz value $\mu_{k,h}^\sigma{-}\sigma$,
default lower bound $L_k^{\mu,\sigma}$,
certified corrected bounds
$L_k^{(A)}=(1-\varepsilon_h^{(A)})(L_k^{\mu,\sigma}{+}\sigma)-\sigma$,
$L_k^*=(1-\varepsilon_h^*)(L_k^{\mu,\sigma}{+}\sigma)-\sigma$
(\Cref{lem:perturb-sigma}),
and the optimal-shift bound
$L_k^{\rm opt}=(1-\varepsilon_h^{\rm opt})(L_k^{\mu,\sigma_{\rm opt}}+\sigma_{\rm opt})-\sigma_{\rm opt}$
with $\sigma_{\rm opt}=C_\epsilon^{\rm cert}+\kappa_\sigma$,
and $\mathrm{P}_1$ Galerkin upper bound $\lambda_{k,h}^{P_1}$, for $k=1,2,3$.
$\varepsilon_h^{(A)}$: global $L^{p_0}$ norm estimate (Choice~A);
$\varepsilon_h^*=d_h^{\mathrm{opt}}/\kappa_\sigma$: optimal-patch bound (see \Cref{rmk:2d-rate});
$C_\epsilon^{\rm cert}$: certified form-bound constant from P1 diagnostic (\Cref{rmk:ceps-sharp});
$\varepsilon_h^{\rm opt}=d_h^{\mathrm{opt}}/\kappa_{\sigma_{\rm opt}}$ with $\kappa_{\sigma_{\rm opt}}=\min(1-\epsilon,\,\sigma_{\rm opt}-C_\epsilon^{\rm cert})$.
All levels certified ($\varepsilon_h^{(A)},\,\varepsilon_h^*<\kappa_\sigma=0.45$).
Exact: $\lambda_1=-1$, $\lambda_2=\lambda_3=-1/9\approx-0.1111$.}
\label{tab:coulomb-2d-cecr}
\centering
\footnotesize
\setlength{\tabcolsep}{2pt}
\resizebox{\textwidth}{!}{\begin{tabular}{ccrrccccc|rrrrrr}
\toprule
$h$ & $N_{\mathrm{tri}}$ & $h_{\max}$ &
$\Gamma_h$ & $A_h$ & $\varepsilon_h^{(A)}$ & $\varepsilon_h^{*}$ & $C_\epsilon^{\rm cert}$ &
$k$ & $\mu_{k,h}^\sigma{-}\sigma$ & $L_k^{\mu,\sigma}$ & $L_k^{(A)}$ & $L_k^{*}$ & $L_k^{\rm opt}$ & $\lambda_{k,h}^{P_1}$ \\
\midrule
\multirow{3}{*}{$0.4$}
  & \multirow{3}{*}{45\,340}
  & \multirow{3}{*}{$0.5362$}
  & \multirow{3}{*}{$0.00564$}
  & \multirow{3}{*}{$0.01254$}
  & \multirow{3}{*}{$0.2764$}
  & \multirow{3}{*}{$0.003493$}
  & \multirow{3}{*}{$0.8195$}
  & 1 & $-1.0055$ & $-3.9959$ & $-6.3466$ & $-4.0256$ & $-1.0086$ & $-0.9983$ \\
& & & & & & & & 2 & $-0.1642$ & $-3.5380$ & $-6.0153$ & $-3.5693$ & $-0.2162$ & $\phantom{-}0.0959$ \\
& & & & & & & & 3 & $-0.1642$ & $-3.5380$ & $-6.0153$ & $-3.5693$ & $-0.2162$ & $\phantom{-}0.0960$ \\
\midrule
\multirow{3}{*}{$0.2$}
  & \multirow{3}{*}{169\,538}
  & \multirow{3}{*}{$0.2745$}
  & \multirow{3}{*}{$0.001522$}
  & \multirow{3}{*}{$0.003383$}
  & \multirow{3}{*}{$0.1416$}
  & \multirow{3}{*}{$0.0008908$}
  & \multirow{3}{*}{$0.8067$}
  & 1 & $-1.0021$ & $-1.9658$ & $-3.4569$ & $-1.9751$ & $-1.0028$ & $-0.9989$ \\
& & & & & & & & 2 & $-0.1628$ & $-1.2630$ & $-2.8536$ & $-1.2730$ & $-0.1766$ & $\phantom{-}0.0952$ \\
& & & & & & & & 3 & $-0.1628$ & $-1.2630$ & $-2.8536$ & $-1.2730$ & $-0.1766$ & $\phantom{-}0.0952$ \\
\midrule
\multirow{3}{*}{$0.1$}
  & \multirow{3}{*}{636\,136}
  & \multirow{3}{*}{$0.1389$}
  & \multirow{3}{*}{$0.0003888$}
  & \multirow{3}{*}{$0.0008639$}
  & \multirow{3}{*}{$0.07191$}
  & \multirow{3}{*}{$0.0002244$}
  & \multirow{3}{*}{$0.8054$}
  & 1 & $-1.0012$ & $-1.2638$ & $-2.0718$ & $-1.2663$ & $-1.0014$ & $-0.9991$ \\
& & & & & & & & 2 & $-0.1624$ & $-0.4635$ & $-1.3290$ & $-0.4662$ & $-0.1660$ & $\phantom{-}0.0951$ \\
& & & & & & & & 3 & $-0.1624$ & $-0.4635$ & $-1.3290$ & $-0.4662$ & $-0.1660$ & $\phantom{-}0.0951$ \\
\bottomrule
\end{tabular}}
\end{table}

\paragraph{Discussion}
All three mesh levels are certified with both bounds ($\varepsilon_h^{(A)},\varepsilon_h^*<\kappa_\sigma=0.45$).
The optimal-patch bound $L_k^*$ is substantially tighter than the global Choice-A bound $L_k^{(A)}$:
for $k=1$ at $h=0.4$, $L_1^{(A)}=-6.347$ vs $L_1^*=-4.026$ (reduction of $2.32$),
and at $h=0.1$, $L_1^{(A)}=-2.072$ vs $L_1^*=-1.266$ (reduction of $0.806$).
The $L_k^*$ interval for $k=1$ narrows with refinement---from
$[-4.03,\,-0.998]$ at $h=0.4$ to $[-1.27,\,-0.999]$ at $h=0.1$---and brackets
the exact value $\lambda_1=-1$ at every level.
The key improvement is that $\varepsilon_h^*=d_h^{\rm opt}/\kappa_\sigma$
converges at $O(h^2)$ (confirmed numerically, see \Cref{subsec:convergence-1center}),
whereas $\varepsilon_h^{(A)}$ (classical patch-integral) gives only $O(h)$ in 2D,
causing $\varepsilon_h^{(A)}/\varepsilon_h^*\approx79,\,159,\,321$ at $h=0.4,\,0.2,\,0.1$.
At $h=0.1$: $\varepsilon_h^*=2.24\times10^{-4}$, giving a certified interval width of $0.267$
for $k=1$.
The $\mathrm{P}_1$ upper bound is unaffected and converges to within $0.001$ of
$\lambda_1=-1$ at the finest level.

The P1 diagnostic (Step~m7) yields certified form-bound constants
$C_\epsilon^{\rm cert}\approx0.820$, $0.807$, $0.805$ at $h=0.4$, $0.2$, $0.1$,
well below $C_\epsilon=12$ (the conservative value used for $L_k^{(A)}$ and $L_k^*$).
Using $\sigma_{\rm opt}=C_\epsilon^{\rm cert}+\kappa_\sigma\approx1.270$, $1.257$, $1.255$
tightens the certified lower bound dramatically:
$L_1^{\rm opt}=-1.009$, $-1.003$, $-1.001$ at $h=0.4$, $0.2$, $0.1$, giving
two-sided intervals $[-1.009,\,-0.998]$, $[-1.003,\,-0.999]$, $[-1.001,\,-0.999]$
for $\lambda_1=-1$ --- interval widths $0.011$, $0.004$, $0.002$ (compared to
$5.02$, $0.975$, $0.267$ for $L_k^*$).
The P1 diagnostic thus reduces the certification gap by a factor of $\mathord{\sim}500$
at $h=0.4$, confirming that $C_\epsilon^{\rm cert}\ll C_\epsilon$ for well-resolved Coulomb potentials.

\subsection{Two-centred Coulomb Potential, 2D}
\label{subsec:num-twocenter-2d}

\paragraph{Setup}
$V(x) = -1/|x-a_1|-1/|x-a_2|$ on $Q=[-7,7]\times[-5,5]\subset\R^2$,
Neumann BC for the CECR lower bound (ECR space, natural boundary conditions;
see \Cref{rmk:bc-convention}),
Dirichlet BC for the $\mathrm{P}_1$ upper bound,
with $a_1=(-2,0)$, $a_2=(2,0)$ (internuclear distance $R_0=4$).
The mesh family is the same linearly-graded Delaunay triangulation
as the single-centre case (\Cref{subsec:num-coulomb-2d}),
with both singularities enforced as exact mesh vertices;
grading law $h_K\le\vartheta h r$ where $h$ is the nominal mesh
parameter and $r$ is the distance to the nearest nucleus.
The first-ring radius around each nucleus is chosen as the
\emph{optimal-patch radius} $\rho_{\rm opt}$ minimising
$d_h(\rho)=c_0(\rho)+c_*(\rho)$ (see \Cref{rmk:2d-rate}).
\Cref{fig:mesh-case8} shows a sample mesh at level $h=0.4$, $\vartheta=1$.

\begin{figure}[htbp]
\centering
\includegraphics[width=0.92\textwidth]{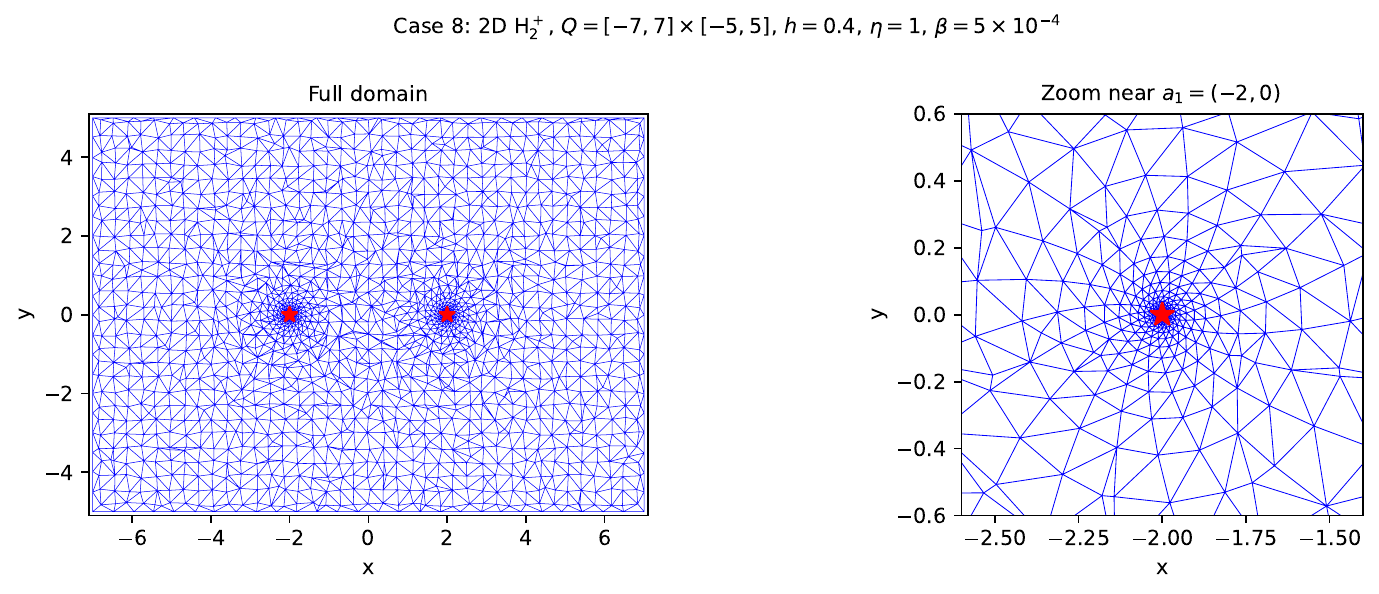}
\caption{Linearly-graded Delaunay triangulation for Case~8
(2D $\mathrm{H}_2^+$, $Q=[-7,7]\times[-5,5]$, $h=0.4$, $\vartheta=1$, optimal-patch radius).
Left: full domain; right: zoom near the left nucleus $a_1=(-2,0)$ (red star).
Both nuclei are enforced as exact mesh vertices;
the first-ring radius around each nucleus is the optimal-patch radius $\rho_{\rm opt}$.}
\label{fig:mesh-case8}
\end{figure}

\paragraph{Parameters}
$\sigma=12.5$, $\epsilon=0.55$, $C_\epsilon=12$,
$\kappa_\sigma=\min(1-\epsilon,\,\sigma-C_\epsilon)=0.45$,
$S_8([-7,7]\times[-5,5])\le1.462$
(same form-bound constants as in \Cref{subsec:num-coulomb-2d}).
The Payne--Weinberger constant is $C_h^{\mathrm{PW}}=h_{\max}/\pi$.
The perturbation parameter is $\varepsilon_h=d_h^{\mathrm{opt}}/\kappa_\sigma$,
where \(d_h^{\mathrm{opt}}\) is the minimum, over the common patch-radius
parameter, of the sum of the two-centre patch and off-patch contributions in
\eqref{eq:direct-est-c-grd-2d}.

\Cref{tab:twocenter-2d-cecr} reports the explicit CECR lower
bound and $\mathrm{P}_1$ upper bound for the first three eigenvalues.

\begin{table}[htbp]
\caption{2D $\mathrm{H}_2^+$
($V=-1/|x-a_1|-1/|x-a_2|$, $a_{1,2}=(\mp 2,0)$, $R_0=4$)
on $Q=[-7,7]\times[-5,5]$,
Neumann BC (CECR lower bounds) / Dirichlet BC ($\mathrm{P}_1$ upper bounds),
linearly-graded Delaunay mesh with optimal-patch radius
(same grading strategy as \Cref{subsec:num-coulomb-2d}),
$\sigma=12.5$, $\epsilon=0.55$, $C_\epsilon=12$, $\kappa_\sigma=0.45$:
shifted Ritz value $\mu_{k,h}^\sigma{-}\sigma$,
default lower bound $L_k^{\mu,\sigma}$,
certified corrected bounds
$L_k^{(A)}=(1-\varepsilon_h^{(A)})(L_k^{\mu,\sigma}{+}\sigma)-\sigma$,
$L_k^*=(1-\varepsilon_h^*)(L_k^{\mu,\sigma}{+}\sigma)-\sigma$
(\Cref{lem:perturb-sigma}),
and the optimal-shift bound
$L_k^{\rm opt}=(1-\varepsilon_h^{\rm opt})(L_k^{\mu,\sigma_{\rm opt}}+\sigma_{\rm opt})-\sigma_{\rm opt}$
with $\sigma_{\rm opt}=C_\epsilon^{\rm cert}+\kappa_\sigma$,
and $\mathrm{P}_1$ Galerkin upper bound $\lambda_{k,h}^{P_1}$, for $k=1,2,3$.
$\varepsilon_h^{(A)}$: global $L^{p_0}$ norm estimate (Choice~A);
$\varepsilon_h^*=d_h^{\mathrm{opt}}/\kappa_\sigma$: optimal-patch bound;
$C_\epsilon^{\rm cert}$: certified form-bound constant from P1 diagnostic (\Cref{rmk:ceps-sharp}).
All levels certified ($\varepsilon_h^{(A)},\,\varepsilon_h^*<\kappa_\sigma=0.45$).
Reference: $\lambda_1\approx-1.317$, $\lambda_2\approx-1.183$, $\lambda_3\approx-0.212$
(converged $\mathrm{P}_1$ values).}
\label{tab:twocenter-2d-cecr}
\centering
\footnotesize
\setlength{\tabcolsep}{2pt}
\resizebox{\textwidth}{!}{\begin{tabular}{ccrrccccc|rrrrrr}
\toprule
$h$ & $N_{\mathrm{tri}}$ & $h_{\max}$ &
$\Gamma_h$ & $A_h$ & $\varepsilon_h^{(A)}$ & $\varepsilon_h^{*}$ & $C_\epsilon^{\rm cert}$ &
$k$ & $\mu_{k,h}^\sigma{-}\sigma$ & $L_k^{\mu,\sigma}$ & $L_k^{(A)}$ & $L_k^{*}$ & $L_k^{\rm opt}$ & $\lambda_{k,h}^{P_1}$ \\
\midrule
\multirow{3}{*}{$0.4$}
  & \multirow{3}{*}{$166\,294$}
  & \multirow{3}{*}{$0.5529$}
  & \multirow{3}{*}{$0.01114$}
  & \multirow{3}{*}{$0.02475$}
  & \multirow{3}{*}{$0.3240$}
  & \multirow{3}{*}{$0.004607$}
  & \multirow{3}{*}{$1.0757$}
  & 1 & $-1.3198$ & $-4.3961$ & $-7.0216$ & $-4.4335$ & $-1.3261$ & $-1.3168$ \\
& & & & & & & & 2 & $-1.1862$ & $-4.3244$ & $-6.9731$ & $-4.3621$ & $-1.1983$ & $-1.1823$ \\
& & & & & & & & 3 & $-0.3853$ & $-3.9036$ & $-6.6886$ & $-3.9432$ & $-0.4579$ & $-0.2122$ \\
\midrule
\multirow{3}{*}{$0.2$}
  & \multirow{3}{*}{$623\,228$}
  & \multirow{3}{*}{$0.2883$}
  & \multirow{3}{*}{$0.002884$}
  & \multirow{3}{*}{$0.006409$}
  & \multirow{3}{*}{$0.1647$}
  & \multirow{3}{*}{$0.001179$}
  & \multirow{3}{*}{$1.0716$}
  & 1 & $-1.3186$ & $-2.3458$ & $-4.0186$ & $-2.3578$ & $-1.3202$ & $-1.3170$ \\
& & & & & & & & 2 & $-1.1844$ & $-2.2346$ & $-3.9257$ & $-2.2467$ & $-1.1876$ & $-1.1827$ \\
& & & & & & & & 3 & $-0.3847$ & $-1.5763$ & $-3.3759$ & $-1.5892$ & $-0.4046$ & $-0.2124$ \\
\bottomrule
\end{tabular}}
\end{table}

\paragraph{Discussion}
Both mesh levels are certified with both bounds
($\varepsilon_h^{(A)},\varepsilon_h^*<\kappa_\sigma=0.45$),
with the same optimal-patch grading strategy as in \Cref{subsec:num-coulomb-2d}.
The optimal-patch bound $L_k^*$ is significantly tighter than $L_k^{(A)}$:
for $k=1$ at $h=0.4$, $L_1^{(A)}=-7.022$ vs $L_1^*=-4.434$ (reduction of $2.59$),
and at $h=0.2$, $L_1^{(A)}=-4.019$ vs $L_1^*=-2.358$.
The $L_k^*$ interval for $k=1$ narrows from $[-4.43,\,-1.317]$ at $h=0.4$
to $[-2.36,\,-1.317]$ at $h=0.2$.
The $\varepsilon_h^*=d_h^{\rm opt}/\kappa_\sigma$ bound is $O(h^2)$, giving
$\varepsilon_h^*=0.004607$ at $h=0.4$ and $\varepsilon_h^*=0.001179$ at $h=0.2$,
versus $\varepsilon_h^{(A)}=0.3240$ and $0.1647$ respectively (ratio $\approx70$ and $140$).
The classical $\gamma_h$-shift bound diverges near both singularities;
the CECR bound remains finite at all levels by \Cref{def:Gamma-h}.
The converged reference values for the first three eigenvalues are
$\lambda_1\approx-1.317$, $\lambda_2\approx-1.183$, $\lambda_3\approx-0.212$
(from the $\mathrm{P}_1$ upper bounds at $h=0.2$).

The P1 diagnostic gives $C_\epsilon^{\rm cert}\approx1.076$ at both levels,
yielding $\sigma_{\rm opt}\approx1.526$ (vs.\ $\sigma=12.5$).
The resulting optimal-shift bounds $L_k^{\rm opt}$ are dramatically tighter:
for $k=1$, the interval narrows from $[-4.43,\,-1.317]$ ($L_1^*$)
to $[-1.326,\,-1.317]$ ($L_1^{\rm opt}$) at $h=0.4$,
and from $[-2.36,\,-1.317]$ to $[-1.320,\,-1.317]$ at $h=0.2$.
The certified width for $\lambda_1$ is $0.009$ ($h=0.4$) and $0.003$ ($h=0.2$),
bracketing the converged P1 reference value $\lambda_1\approx-1.317$.

\subsection{Single Coulomb Potential, 3D (hydrogen atom)}
\label{subsec:num-coulomb-3d}

\paragraph{Setup}
$V(x) = -1/|x|$ on the ball $B(0,6)\subset\R^3$.
The operator is $H = -\Delta - 1/|x|$; eigenvalues on the full space
follow $\lambda_n = -1/(4n^2)$:
$\lambda_1 = -0.25$, $\lambda_2 = -0.0625$,
$\lambda_3 = -1/36 \approx -0.02778$.

Neumann BC is imposed on $\partial B(0,6)$ for the CECR lower bound
(ECR space, natural conditions; see \Cref{rmk:bc-convention}).
Dirichlet BC is used for the $\mathrm{P}_1$ Galerkin upper bound in
\Cref{tab:coulomb-3d-cecr}; Neumann BC is used for the $\mathrm{P}_1$
Galerkin upper bound in the convergence study of \Cref{subsec:convergence-1center}.
The mesh is a structured icosahedron-shell ball
(angular parameter $\theta=0.1$, first-shell radius $h_{\min}=h^2/(10R)$,
subdivision level $n_{\rm sub}$ chosen to guarantee $h_{\max}<1.05h$),
generated without element filtering; the origin is a mesh vertex.
The direct-method bound $\varepsilon_h=d_h^{\rm opt}/\kappa_\sigma$
is computed using Duffy GL-5 quadrature on singular elements (those with
a vertex at the Coulomb centre).
Parameters: $\sigma=5.4$, $\epsilon=0.6$, $C_\epsilon=5.0$,
$\kappa_\sigma=\min(1-\epsilon,\,\sigma-C_\epsilon)=\min(0.4,0.4)=0.4$,
$C_6=0.642$ (rigorous $H^1(B(0,6))\hookrightarrow L^6(B(0,6))$ bound,
Neumann BC; see \Cref{rmk:c6-ball}).
Results are reported for four mesh levels ($h=1.4$, $h=1.2$, $h=1.0$, and $h=0.8$).

\begin{table}[htbp]
\caption{3D hydrogen ($V=-1/|x|$) on $B(0,6)$,
structured icosahedron-shell ball mesh ($\theta=0.1$, $n_{\rm sub}$ guaranteeing $h_{\max}<1.05h$),
$\sigma=5.4$, $\epsilon=0.6$, $C_\epsilon=5.0$, $\kappa_\sigma=0.4$, $C_6=0.642$:
shifted Ritz value $\mu_{k,h}^\sigma{-}\sigma$,
intermediate lower bound $L_k^{\mu,\sigma}$,
certified lower bound $L_k=(1-\varepsilon_h)(L_k^{\mu,\sigma}{+}\sigma)-\sigma$
using $\varepsilon_h=d_h^{\rm opt}/\kappa_\sigma$ (\Cref{lem:perturb-sigma}),
Dirichlet $\mathrm{P}_1$ upper bound $\lambda_{k,h}^{D,P_1}$,
and optimal-$\sigma$ bound
$L_k^{\rm opt}=(1-\varepsilon_h)(L_k^{\mu,\sigma_{\rm opt}}+\sigma_{\rm opt})-\sigma_{\rm opt}$
with $\sigma_{\rm opt}=C_\epsilon^{\rm cert}+\kappa_\sigma$ (\Cref{rmk:ceps-sharp}),
for $k=1,2,3$.
$\varepsilon_h=d_h^{\rm opt}/\kappa_\sigma$: direct-method bound (Duffy GL-5 quadrature);
$C_\epsilon^{\rm cert}$: m7 diagnostic eigenvalue.
All levels certified ($\varepsilon_h<\kappa_\sigma=0.4$).
Exact: $\lambda_1=-0.250$, $\lambda_2=-0.0625$, $\lambda_3\approx-0.02778$.}
\label{tab:coulomb-3d-cecr}
\centering
\footnotesize
\setlength{\tabcolsep}{2pt}
\resizebox{\textwidth}{!}{%
\begin{tabular}{ccrrccc|rrrrrr}
\toprule
$h$ & $N_{\mathrm{tet}}$ & $h_{\max}$ &
$\Gamma_h$ & $A_h$ & $\varepsilon_h$ & $C_\epsilon^{\rm cert}$ &
$k$ & $\mu_{k,h}^\sigma{-}\sigma$ & $L_k^{\mu,\sigma}$ & $L_k$ & $\lambda_{k,h}^{D,P_1}$ & $L_k^{\rm opt}$ \\
\midrule
\multirow{3}{*}{$1.4$}
  & \multirow{3}{*}{$331{,}200$}
  & \multirow{3}{*}{$1.442$}
  & \multirow{3}{*}{$4.881{\times}10^{-2}$}
  & \multirow{3}{*}{$1.220{\times}10^{-1}$}
  & \multirow{3}{*}{$1.432{\times}10^{-1}$}
  & \multirow{3}{*}{$0.411$}
  & 1 & $-0.313$ & $-3.212$ & $-3.525$ & $-0.209$ & $-0.463$ \\
& & & & & & & 2 & $-0.307$ & $-3.211$ & $-3.524$ & $\phantom{-}0.253$ & $-0.459$ \\
& & & & & & & 3 & $-0.288$ & $-3.207$ & $-3.521$ & $\phantom{-}0.256$ & $-0.448$ \\
\midrule
\multirow{3}{*}{$1.2$}
  & \multirow{3}{*}{$357{,}120$}
  & \multirow{3}{*}{$1.215$}
  & \multirow{3}{*}{$3.512{\times}10^{-2}$}
  & \multirow{3}{*}{$8.781{\times}10^{-2}$}
  & \multirow{3}{*}{$1.140{\times}10^{-1}$}
  & \multirow{3}{*}{$0.411$}
  & 1 & $-0.310$ & $-2.744$ & $-3.047$ & $-0.210$ & $-0.429$ \\
& & & & & & & 2 & $-0.306$ & $-2.743$ & $-3.046$ & $\phantom{-}0.250$ & $-0.426$ \\
& & & & & & & 3 & $-0.286$ & $-2.737$ & $-3.041$ & $\phantom{-}0.253$ & $-0.412$ \\
\midrule
\multirow{3}{*}{$1.0$}
  & \multirow{3}{*}{$400{,}320$}
  & \multirow{3}{*}{$1.030$}
  & \multirow{3}{*}{$2.318{\times}10^{-2}$}
  & \multirow{3}{*}{$5.794{\times}10^{-2}$}
  & \multirow{3}{*}{$7.555{\times}10^{-2}$}
  & \multirow{3}{*}{$0.412$}
  & 1 & $-0.308$ & $-2.289$ & $-2.524$ & $-0.210$ & $-0.392$ \\
& & & & & & & 2 & $-0.305$ & $-2.288$ & $-2.523$ & $\phantom{-}0.248$ & $-0.390$ \\
& & & & & & & 3 & $-0.285$ & $-2.280$ & $-2.516$ & $\phantom{-}0.250$ & $-0.375$ \\
\midrule
\multirow{3}{*}{$0.8$}
  & \multirow{3}{*}{$452{,}160$}
  & \multirow{3}{*}{$0.811$}
  & \multirow{3}{*}{$1.295{\times}10^{-2}$}
  & \multirow{3}{*}{$3.238{\times}10^{-2}$}
  & \multirow{3}{*}{$4.948{\times}10^{-2}$}
  & \multirow{3}{*}{$0.412$}
  & 1 & $-0.307$ & $-1.716$ & $-1.898$ & $-0.211$ & $-0.361$ \\
& & & & & & & 2 & $-0.305$ & $-1.715$ & $-1.898$ & $\phantom{-}0.245$ & $-0.359$ \\
& & & & & & & 3 & $-0.285$ & $-1.704$ & $-1.887$ & $\phantom{-}0.248$ & $-0.342$ \\
\bottomrule
\end{tabular}}
\end{table}

\paragraph{Discussion}
All four levels are certified ($\varepsilon_h<\kappa_\sigma=0.4$), with $\varepsilon_h$
decreasing from $1.43{\times}10^{-1}$ ($h=1.4$) to $1.14{\times}10^{-1}$ ($h=1.2$)
to $7.55{\times}10^{-2}$ ($h=1.0$) to $4.95{\times}10^{-2}$ ($h=0.8$).
The structured ball mesh avoids element filtering entirely;
the origin is an exact mesh vertex and the boundary is detected radially.
The $\mathrm{P}_1$ upper bound $\lambda_{1,h}^{P_1}$ is stable at approximately $-0.210$
across all levels, converging to $\lambda_1=-0.250$ from above.
Eigenvalues $\lambda_2$ and $\lambda_3$ are nearly degenerate at all levels,
reflecting the approximate spherical symmetry of the hydrogen atom on $B(0,6)$.
The m7 diagnostic (\Cref{rmk:ceps-sharp}) yields $C_\epsilon^{\rm cert}\approx 0.411$--$0.412$
at all levels (very stable), giving $\sigma_{\rm opt}\approx 0.811$--$0.812\ll\sigma=5.4$.
Re-running CECR at $\sigma_{\rm opt}$ yields
$L_1^{\rm opt}=-0.463$ ($h=1.4$, enclosure $[-0.463,\,-0.209]$, width $0.253$),
$L_1^{\rm opt}=-0.429$ ($h=1.2$, enclosure $[-0.429,\,-0.210]$, width $0.219$),
$L_1^{\rm opt}=-0.392$ ($h=1.0$, enclosure $[-0.392,\,-0.210]$, width $0.182$),
and $L_1^{\rm opt}=-0.361$ ($h=0.8$, enclosure $[-0.361,\,-0.211]$, width $0.150$),
all bracketing $\lambda_1=-0.250$.
All opt-$\sigma$ lower bounds are below the respective Dirichlet P1 upper bounds,
so the certified enclosures overlap and do not yet certify $\lambda_1<\lambda_2$.

\subsection[Two-centred Coulomb Potential, 3D (H2+ molecule)]{Two-centred Coulomb Potential, 3D
(\texorpdfstring{$\mathrm{H}_2^+$}{H2+} molecule)}
\label{subsec:num-twocenter-3d}

\paragraph{Setup}
$\mathrm{H}_2^+$ molecular ion:
$V(x) = -1/|x-a_1|-1/|x-a_2|$ in 3D,
$a_{1,2}=(\mp 2,0,0)$ (internuclear distance $d=4\,\text{bohr}$),
on the box $Q=[-8,8]\times[-6,6]^2$.
The operator $H=-\Delta + V$ uses the $-\Delta$ convention
(not $-\tfrac{1}{2}\Delta$).
Reference: $\lambda_1\approx -0.5513$ (Bates~1953; Madsen--Peek~1971).

Neumann BC is imposed on $\partial Q$ for the CECR lower bound
(ECR space, natural conditions; see \Cref{rmk:bc-convention});
Dirichlet BC is used for the $\mathrm{P}_1$ Galerkin upper bound.
The mesh uses a linearly-graded Delaunay tetrahedralisation
with both nuclei embedded as exact vertices.
The patch radius around each nucleus is chosen as the optimal-patch radius
(minimising $d_h(\rho)$; see \Cref{rmk:2d-rate}).
Parameters: $\sigma=5.4$, $\epsilon=0.6$, $C_\epsilon=5.0$,
$\kappa_\sigma=\min(1-\epsilon,\,\sigma-C_\epsilon)=\min(0.4,0.4)=0.4$,
$C_6=0.619$ (rigorous $H^1(\Omega)\hookrightarrow L^6(\Omega)$ bound via \eqref{eq:C6-box-line}).
The $h=1.0$ CECR eigensolver uses the ichol+PCG iterative path
($4.72{\times}10^6$ DOFs).

\begin{table}[htbp]
\caption{3D $\mathrm{H}_2^+$ ($V=-1/|x-a_1|-1/|x-a_2|$,
$a_{1,2}=(\mp 2,0,0)$, $d=4\,\mathrm{bohr}$) on $Q=[-8,8]\times[-6,6]^2$,
Neumann BC (CECR lower bounds) / Dirichlet BC ($\mathrm{P}_1$ upper bounds),
linearly-graded Delaunay mesh with optimal-patch radius,
$\sigma=5.4$, $\epsilon=0.6$, $C_\epsilon=5.0$, $\kappa_\sigma=0.4$, $C_6=0.619$:
shifted Ritz value $\mu_{k,h}^\sigma{-}\sigma$,
intermediate lower bound $L_k^{\mu,\sigma}$,
certified lower bound $L_k=(1-\varepsilon_h)(L_k^{\mu,\sigma}{+}\sigma)-\sigma$
using $\varepsilon_h=d_h^{\rm opt}/\kappa_\sigma$,
$\mathrm{P}_1$ Galerkin upper bound $\lambda_{k,h}^{P_1}$,
and optimal-$\sigma$ bound $L_k^{\rm opt}$,
for $k=1,2,3$.
$h=1.0$: ichol+PCG (4.72M DOFs), $\sigma_{\rm opt}=1.112$ ($C_\epsilon^{\rm cert}=0.712$).
Reference: $\lambda_1\approx -0.5513$ (Bates~1953).}
\label{tab:twocenter-3d-cecr}
\centering
\footnotesize
\setlength{\tabcolsep}{2pt}
\resizebox{\textwidth}{!}{%
\begin{tabular}{ccrrccc|rrrrrr}
\toprule
$h$ & $N_{\mathrm{tet}}$ & $h_{\max}$ &
$\Gamma_h$ & $A_h$ & $\varepsilon_h$ & $C_\epsilon^{\rm cert}$ &
$k$ & $\mu_{k,h}^\sigma{-}\sigma$ & $L_k^{\mu,\sigma}$ & $L_k$ & $\lambda_{k,h}^{P_1}$ & $L_k^{\rm opt}$ \\
\midrule
\multirow{3}{*}{$1.0$}
  & \multirow{3}{*}{$1{,}571{,}791$}
  & \multirow{3}{*}{$1.028$}
  & \multirow{3}{*}{$2.77{\times}10^{-2}$}
  & \multirow{3}{*}{$6.92{\times}10^{-2}$}
  & \multirow{3}{*}{$1.706{\times}10^{-2}$}
  & \multirow{3}{*}{$0.712$}
  & 1 & $-0.711$ & $-2.481$ & $-2.531$ & $-0.544$ & $-1.045$ \\
& & & & & & & 2 & $-0.376$ & $-2.346$ & $-2.398$ & $-0.316$ & $-0.569$ \\
& & & & & & & 3 & $-0.347$ & $-2.334$ & $-2.386$ & $-0.105$ & $-0.482$ \\
\bottomrule
\end{tabular}}
\end{table}

\paragraph{Discussion}
For the 3D H$_2^+$ molecule on the box $Q=[-8,8]\times[-6,6]^2$
with new parameters ($\sigma=5.4$, $\epsilon=0.6$, $C_\epsilon=5.0$, $\kappa_\sigma=0.4$),
one mesh level is certified.

The $h=1.0$ mesh ($N_{\rm tet}=1{,}571{,}791$, $h_{\max}=1.028$,
$\varepsilon_h=1.71\times10^{-2}$) has $4.72\times10^6$ DOFs.
The ichol+PCG eigensolver yields rough-$\sigma$ bounds
$\lambda_1\in[-2.531,\,-0.544]$ (width $1.987$);
the m7 diagnostic gives $C_\epsilon^{\rm cert}=0.712$,
$\sigma_{\rm opt}=1.112$.
At $\sigma_{\rm opt}$: $\lambda_1\in[-1.045,\,-0.544]$ (width $0.501$),
$\lambda_2\in[-0.569,\,-0.316]$, $\lambda_3\in[-0.482,\,-0.105]$.
The reference $\lambda_1\approx-0.5513$ lies within the certified interval.
The $\mathrm{P}_1$ upper bound $\lambda_{1,h}^{P_1}=-0.544$ at $h=1.0$
closely tracks the reference.

\subsection{Error convergence for one-centre Coulomb potential}
\label{subsec:convergence-1center}

Since the exact eigenvalues are known for the single-centre Coulomb
potential in both 2D ($\lambda_1 = -1$) and 3D ($\lambda_1 = -0.25$),
we can track the certified lower-bound gap $\lambda_1-L_1$ and
upper-bound gap $\lambda_{1,h}^{P_1}-\lambda_1$ across the mesh levels
in \Cref{tab:coulomb-2d-cecr,tab:coulomb-3d-cecr}.
\Cref{fig:convergence-1center} (2D) and \Cref{fig:convergence-1center-3d}
(3D) each show two panels on a log--log scale with $h_{\max}$
increasing left to right: panel~(a) plots the lower-bound gap
$\lambda_1-L_1$ (rough $\sigma$) and the Dirichlet P1 upper-bound gap
$\lambda_{1,h}^{P_1}-\lambda_1$;
panel~(b) shows the convergence of the mesh constants
$\Gamma_h$, $A_h$, and $\varepsilon_h$.

\begin{figure}[htbp]
  \centering
  \includegraphics[width=\linewidth]{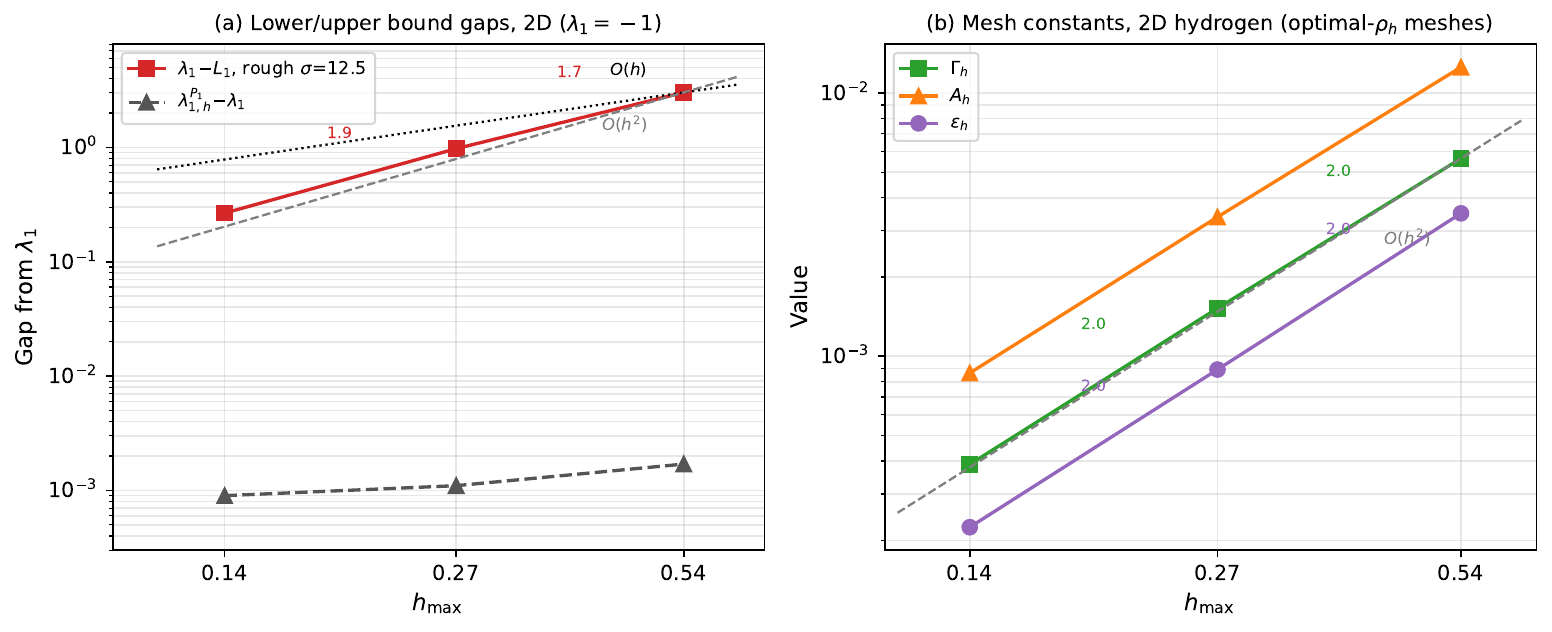}
  \caption{Convergence for 2D hydrogen ($V=-1/|x|$) on $Q=[-5,5]^2$,
    $\lambda_1=-1$, three mesh levels $h=0.4,0.2,0.1$
    (optimal-patch-radius meshes; \Cref{tab:coulomb-2d-cecr}).
    (a) Lower-bound gap $\lambda_1-L_1$ (rough $\sigma=12.5$, red squares)
    and upper-bound gap $\lambda_{1,h}^{P_1}-\lambda_1$ (grey triangles).
    Numbers on the lower-gap curve are local EOC.
    Dotted/$O(h)$ and dashed/$O(h^2)$ reference lines shown.
    (b) Mesh constants $\Gamma_h$ (green), $A_h$ (orange),
    $\varepsilon_h=d_h^{\rm opt}/\kappa_\sigma$ (purple circles);
    all converge at $O(h^2)$.}
  \label{fig:convergence-1center}
\end{figure}

\begin{figure}[htbp]
  \centering
  \includegraphics[width=\linewidth]{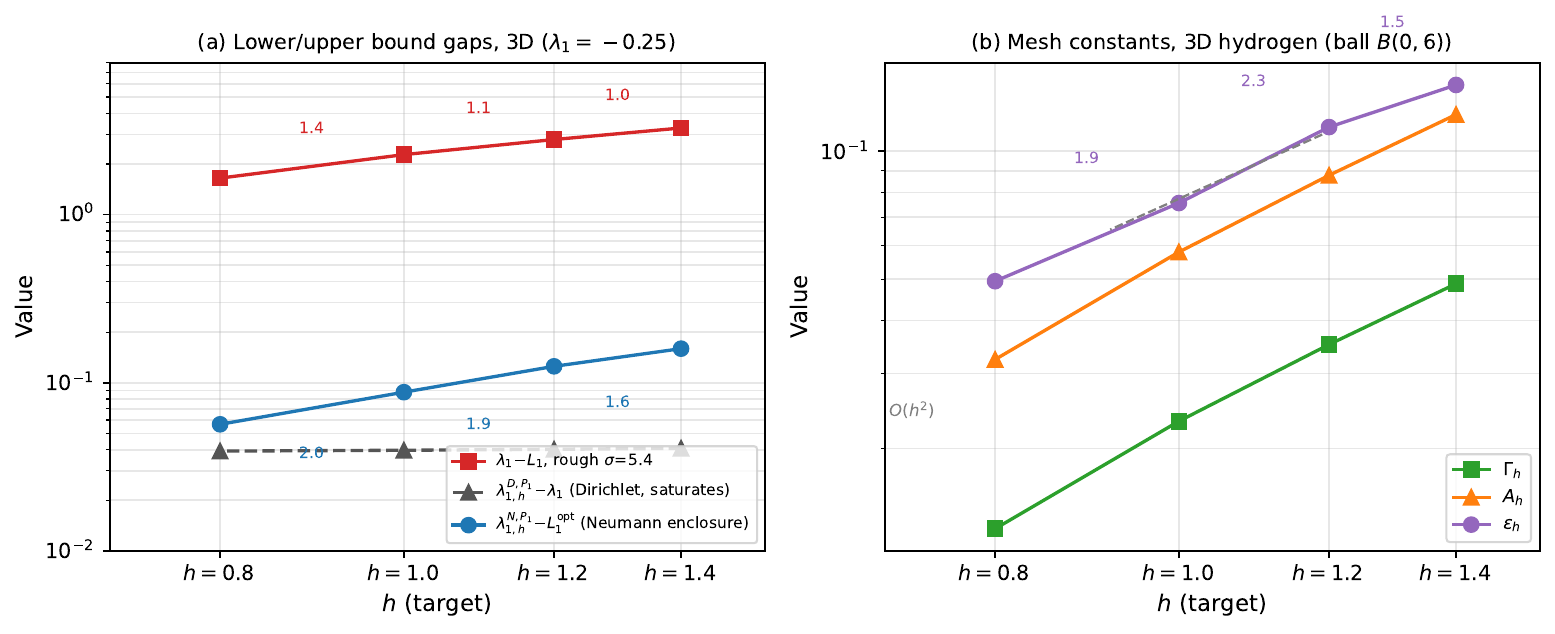}
  \caption{Convergence for 3D hydrogen ($V=-1/|x|$) on $B(0,6)$,
    reference eigenvalue $\lambda_1=-0.25$ (on $\R^3$),
    mesh levels $h=1.4$, $1.2$, $1.0$, and $0.8$
    (structured icosahedron-shell ball meshes; \Cref{tab:coulomb-3d-cecr}).
    (a) Lower-bound gap $\lambda_1-L_1$ (rough $\sigma=5.4$, red squares);
    Dirichlet $\mathrm{P}_1$ upper-bound gap $\lambda_{1,h}^{D,P_1}-\lambda_1$
    (grey triangles; saturates near $0.040$ due to domain truncation);
    and certified Neumann enclosure width
    $\lambda_{1,h}^{N,P_1}-L_1^{\rm opt}$ (blue circles),
    which converges at $O(h^2)$ (EOC $\approx1.6$--$2.0$).
    (b) Mesh constants $\Gamma_h$ (green), $A_h$ (orange),
    $\varepsilon_h=d_h^{\rm opt}/\kappa_\sigma$ (purple circles) for four mesh levels.}
  \label{fig:convergence-1center-3d}
\end{figure}

In the 2D panels, the rough shift $\sigma=12.5$ gives a lower-bound
gap $\lambda_1-L_1$ decreasing from $3.03$ at $h=0.4$ ($h_{\max}=0.537$)
to $0.27$ at $h=0.1$ ($h_{\max}=0.139$), with local EOC $\approx 1.7$
and $1.9$ between the three levels.
The upper-bound gap $\lambda_{1,h}^{P_1}-\lambda_1$ is already very small
($\approx 0.002$ at $h=0.4$, $\approx 0.001$ at $h=0.1$).
Panel~(b) confirms that $\varepsilon_h=d_h^{\rm opt}/\kappa_\sigma$,
$\Gamma_h$, and $A_h$ all converge at $O(h^2)$, consistent with the
optimal-patch-radius design of \Cref{rmk:2d-rate} and the estimate of
\Cref{rmk:Gamma-h-rate}.

In the 3D panels (levels $h=1.4$, $1.2$, $1.0$, and $0.8$ on the ball $B(0,6)$,
parameters $\sigma=5.4$, $\kappa_\sigma=0.4$),
the rough shift gives a lower-bound gap $\lambda_1-L_1$ decreasing from
$3.28$ at $h=1.4$ ($h_{\max}=1.442$) to $2.80$ ($h=1.2$), $2.27$ ($h=1.0$),
and $1.65$ ($h=0.8$, $h_{\max}=0.811$), with local EOC approximately $1.0$--$1.4$.
The gap remains large because the correction formula is dominated
by the conservative shift $\sigma=5.4$ at these coarse mesh sizes.
The Dirichlet $\mathrm{P}_1$ upper-bound gap saturates near $0.040$ due to the
domain-truncation error $\lambda_1^D(B_R)-\lambda_1\approx 0.039$ at $R=6$.
The certified Neumann enclosure width $\lambda_{1,h}^{N,P_1}-L_1^{\rm opt}$
decreases from $0.160$ ($h=1.4$) to $0.125$, $0.088$, and $0.057$ ($h=0.8$),
with local EOC approximately $1.6$--$2.0$.
This convergence demonstrates that the certified two-sided bounds
$[L_1^{\rm opt},\,\lambda_{1,h}^{N,P_1}]$ genuinely bracket the Neumann
eigenvalue $\lambda_1^N(B(0,6))$, with the enclosure width converging at
$O(h^2)$ consistent with the $\varepsilon_h$ estimate in panel~(b).
Panel~(b) shows $\varepsilon_h$ decreasing from $1.43{\times}10^{-1}$ ($h=1.4$)
to $4.95{\times}10^{-2}$ ($h=0.8$),
consistent with $O(h^2)$ behaviour of the direct-method bound.

\subsection{Discussion}
\label{subsec:numerics-discussion}

All four experiments demonstrate the convergent CECR lower bound of
\Cref{thm:cecr-lb-convergent}, with all computable quantities
($C_h^{\mathrm{PW}}$, $\Gamma_h$, $A_h$, $L_k^{\mu,\sigma}$)
evaluated explicitly.

In the 2D experiments, $\Gamma_h$ decreases by roughly two orders of
magnitude across the four mesh levels in both single- and two-centre
cases, confirming the $\Gamma_h=O(h^2)$ prediction, with respect to the
nominal mesh parameter, of
\Cref{rmk:Gamma-h-rate} for graded meshes.
The explicit gap $(\mu_{1,h}^\sigma{-}\sigma)-L_1^{\mu,\sigma}$ closes at
rate $O(h^2)$ with EOC $\approx 2.0$.
In contrast, the classical $\gamma_h$-shift
(\Cref{rmk:gamma-h-limitation}) produces divergent bounds for these
meshes because $\gamma_h=\|c_h^-\|_{L^\infty}\to\infty$.

In the 3D experiments, $\Gamma_h$ is substantially larger than in 2D
at comparable nominal mesh parameter $h$, because 3D element diameters
grow faster away from the singularity; it decreases with refinement in
both cases.
The explicit gap $(\mu_{k,h}^\sigma{-}\sigma)-L_k^{\mu,\sigma}$ is
correspondingly wider in 3D but narrows with refinement.
The shifted Ritz value $\mu_{1,h}^\sigma{-}\sigma$ is
\emph{not} itself a certified lower bound for $\lambda_1(-\Delta+V)$
on $\mathbb{R}^3$; the certified lower bound is $L_1$ obtained via
\Cref{thm:cecr-lb-convergent,thm:main}.

\begin{remark}[Certified spectral separation]
\label{rmk:numerical-separation}
The two-sided bounds also provide a direct way to verify spectral
separation.  If, for the same continuous problem, the certified enclosure
for $\lambda_k$ is
$[L_k,U_k]$, then
\[
  U_k < L_{k+1}
\]
proves a positive gap between $\lambda_k$ and $\lambda_{k+1}$.  This
criterion is especially useful when a later error estimate requires an
isolated eigenvalue or an isolated cluster.  Such a certified separation
can also be used as the spectral-gap input for sharper a posteriori
postprocessing, for example Weinstein-type bounds
\cite{WeinsteinStenger1972} or Kato--Lehmann--Goerisch bounds
\cite{Kato1949,Lehmann1963,Goerisch1986,BehnkeGoerisch1994}.
\begin{itemize}
\item \emph{2D one-centre Coulomb.}
  The exact ground state is separated from the double level
  $\lambda_2=\lambda_3=-1/9$.
  At the finest level $h=0.1$ in \Cref{tab:coulomb-2d-cecr},
  $U_1=-0.9991<L_2=-0.4662$, certifying a gap of at least
  $0.5329$ between $\lambda_1$ and $\lambda_2$.
  The method does not separate $\lambda_2$ and $\lambda_3$,
  as expected from the exact degeneracy.
\item \emph{2D two-centre Coulomb.}
  At the finest level $h=0.2$ in \Cref{tab:twocenter-2d-cecr},
  $U_1=-1.3170$ and $L_2=-2.2467$, so the certified enclosures
  $[L_1,U_1]=[-2.36,-1.317]$ and $[L_2,U_2]=[-2.25,-1.183]$ overlap.
  The current two mesh levels do not yet certify spectral separation;
  finer meshes are needed.
\item \emph{3D hydrogen.}
  At all four levels in \Cref{tab:coulomb-3d-cecr}, the certified
  lower bounds $L_1^{\rm opt},L_2^{\rm opt},L_3^{\rm opt}$ lie below
  the respective Dirichlet P1 upper bounds, so the enclosures overlap
  and do not yet prove $\lambda_1<\lambda_2$.
  The opt-$\sigma$ enclosure at $h=0.8$ is $[-0.361,\,-0.211]$
  (width $0.150$), bracketing the exact $\lambda_1=-0.250$.
  The Ritz values $\mu_{k,h}^{\sigma_{\rm opt}}-\sigma_{\rm opt}$ show
  three closely spaced values near $-0.30$, consistent with the
  near-degeneracy of the lowest eigenstates of hydrogen on $B(0,6)$.
\item \emph{3D $\mathrm{H}_2^+$.}
  At the certified level ($h=1.0$) in \Cref{tab:twocenter-3d-cecr},
  $L_2^{\rm opt}<U_1$ ($L_2^{\rm opt}=-0.569<U_1=-0.544$),
  so the certified enclosures overlap and spectral separation
  $\lambda_1<\lambda_2$ is not yet proven.
  The Ritz values show sizeable numerical gaps
  (about $0.24$ between $k=1$ and $k=2$),
  but finer mesh levels are required to close the certified gap.
\end{itemize}
\end{remark}

\section{Conclusion}
\label{sec:conclusion}

We have developed a pair-space framework for the CECR FEM method on
sign-changing and unbounded potentials, introduced the elementwise
reaction constant $\Gamma_h$ and a convergent CECR lower-bound
theorem, and combined these with a perturbation lemma into a fully
explicit two-sided eigenvalue bound for Coulomb-type potentials.
The main contributions are:

\begin{enumerate}[label=(\roman*)]
\item A \emph{pair-space CECR framework}
  (\Cref{sec:cecr}) that lifts the kinetic/reaction split into a
  two-component bilinear form with a self-contained max-min proof
  of the CECR lower bound for non-negative piecewise-constant $c_h$
  (\Cref{thm:cecr-abstract}), recovering Liu's positive-$c_h$
  lower bound \cite{Liu2015}.

\item A \emph{convergent CECR lower bound} via the elementwise
  reaction constant $\Gamma_h=\max_K c_h^-|_K\cdot h_K^{\,2}/\pi^2$
  (\Cref{thm:cecr-lb-convergent}). The key insight is that
  $\Gamma_h\to 0$ as the mesh is refined (while
  $\gamma_h\to\infty$), so the gap
  $(\mu_{k,h}^\sigma{-}\sigma)-L_k^{\mu,\sigma}$ closes at rate
  $O(h^2)$ in the mesh-family parameter on graded meshes. This bypasses the
  uncomputable form-norm constant
  $\widetilde C_h$ entirely.

\item A \emph{fixed-shift two-sided theorem} \Cref{thm:main}
  that combines the CECR lower bound with a perturbation lemma
  (\Cref{lem:perturb-sigma}) for the $V\leftrightarrow c_h$
  substitution, yielding a fully computable enclosure
  $L_k\le\lambda_k\le U_k$ without $L^\infty$-control of $c_h^-$.
  The perturbation ratio $\varepsilon_h$ is controlled by the
  mesh-local estimates \eqref{eq:direct-est-c-grd} and
  \eqref{eq:direct-est-c-grd-2d}; under \Cref{ass:graded-mesh} these reduce to the
  graded-rate bound \eqref{eq:eps-graded-rate}, namely
  $O(h^2/\kappa_\sigma)$ for both $N=3$ and, with the patch radius
  chosen as in \Cref{rmk:2d-rate}, $N=2$.

\end{enumerate}

\paragraph{Open questions and future directions}

\begin{itemize}
  \item \textbf{Finer 3D meshes and tighter enclosures.}
    The 3D explicit gaps $(\mu_{k,h}^\sigma{-}\sigma)-L_k^{\mu,\sigma}$ remain
    substantial at the reported mesh sizes because the elementwise
    reaction constant $\Gamma_h$ decreases more slowly on coarse
    tetrahedral meshes. Finer graded meshes or enriched trial
    spaces would tighten both the CECR lower bound and the
    conforming upper bound, narrowing the two-sided enclosure.

  \item \textbf{Interval-arithmetic verification.}
    The bounds reported in this paper are evaluated in
    double-precision floating-point arithmetic and are therefore
    subject to rounding and matrix-computation error (rounding
    errors $O(10^{-15})$ are small versus the reported gaps
    $O(10^{-2})$, but not accounted for in a verified sense).
    Elevating the explicit bounds of this paper to
    \emph{verified} (IEEE~754-safe) enclosures requires interval
    arithmetic (e.g., INTLAB \cite{Rump1999}) applied to all
    matrix assembly and eigensolve steps, and is left for future
    work.

  \item \textbf{Sharpened ground-state lower bound via the
    Lehmann--Goerisch method.}
    The CECR bounds developed here are \emph{projection-based},
    and their accuracy is ultimately limited by the coarse mesh
    size entering $C_h^{\mathrm{PW}}$ and $\Gamma_h$.  Once an
    explicit (even crude) lower bound
    $\rho\le\lambda_{m+1}$ is available---for instance,
    $\rho\le\lambda_2$ when $m=1$---classical
    eigenvalue-enclosure techniques can, in a second stage,
    combine $\rho$ with high-quality approximate eigenfunctions
    (e.g., from $hp$-FEM) to produce substantially sharper lower
    bounds on $\lambda_1,\ldots,\lambda_m$: Kato's bound
    \cite{Kato1949}, Temple's bound \cite{Temple1928}, the
    intermediate-problem method of Weinstein and Stenger
    \cite{WeinsteinStenger1972}, and the Lehmann--Maehly--Goerisch
    method \cite{Lehmann1963,Goerisch1986,BehnkeGoerisch1994}.
    As discussed in \cite[Ch.~5]{liu2024guaranteed}, the
    Lehmann--Goerisch bounds are essentially insensitive to the
    quality of $\rho$ and recover the optimal convergence rate
    of the approximate eigenfunction, thereby bypassing the
    sub-optimal rate inherent in projection-based lower bounds.
    The explicit CECR bounds of the present paper furnish
    precisely such a $\rho$ for Coulomb-type potentials, enabling
    this two-stage strategy; its implementation is pursued in
    forthcoming work.

  \item \textbf{Magnetic Hamiltonians and multi-electron systems.}
    Extension of the CECR framework to $(-i\nabla + A_h)^2 + V$
    (magnetic Hamiltonians) and to multi-electron operators
    with antisymmetry constraints are natural next directions.
\end{itemize}

\section*{Acknowledgments}
The author is supported by the Japan Society for the Promotion
of Science (JSPS) through KAKENHI Grant Numbers JP20KK0306,
JP22H00512, JP24K00538, and JP21H00998.

\appendix
\section{Proof of \texorpdfstring{\Cref{thm:cecr-abstract}}%
                  {Theorem~\ref{thm:cecr-abstract}}}
\label{app:cecr-abstract-proof}

As noted after the statement of \Cref{thm:cecr-abstract}, the
argument below uses only that $\widehat\Pi_h$ is an
$\widehat a$-orthogonal projection from $\widehat V$ onto
$\widehat V_h$---so that the Pythagorean identity
\eqref{eq:pair-pythagoras} is available---together with the
residual error estimate \eqref{eq:pair-error-estimate}. No
further information about $\widehat\Pi_h$ or $V_h^{\ECR}$ enters.
The proof is therefore identical in substance to the general
framework of \cite[Theorem~2.1]{Liu2015} and
\cite[Theorem~4.1]{liu2024guaranteed}, and is reproduced here
only to keep the paper self-contained.

\begin{proof}[Proof of \Cref{thm:cecr-abstract}]
It is convenient to work with the reciprocal Rayleigh quotient
\[
  \widetilde R(\widehat v)
  := \frac{\widehat b(\widehat v,\widehat v)}{\widehat a(\widehat v,\widehat v)},
  \qquad \widehat v\in\widehat V(h)\setminus\{0\},
\]
for which the max-min principle on $\widehat V$ and on $\widehat V_h$
reads
\begin{equation}
\label{eq:max-min-pair}
  \frac{1}{\mu_k}
  = \max_{\substack{H\subset\widehat V\\\dim H=k}}
    \min_{\widehat v\in H}\widetilde R(\widehat v),
  \qquad
  \frac{1}{\mu_{k,h}}
  = \max_{\substack{H\subset\widehat V_h\\\dim H=k}}
    \min_{\widehat v_h\in H}\widetilde R(\widehat v_h).
\end{equation}
Let $\{\widehat u_i\}_{i=1}^{\infty}$ and
$\{\widehat u_{i,h}\}_{i=1}^{n}$ be $\widehat a$-orthonormal
eigenfunctions of \eqref{eq:pair-evp} on $\widehat V$ and $\widehat V_h$
respectively, and set $E_k:=\operatorname{span}\{\widehat u_i\}_{i=1}^{k}$.

\medskip
\emph{Case 1.} Suppose there exists
$\widehat\phi\in E_k\setminus\{0\}$ with
$\widehat\Pi_h\widehat\phi=0$. Then
$\widehat\phi-\widehat\Pi_h\widehat\phi=\widehat\phi$ and
\eqref{eq:pair-error-estimate} gives
$\|\widehat\phi\|_{\widehat b}
\le C_h\|\widehat\phi\|_{\widehat a}$, whence
$\widetilde R(\widehat\phi)\le C_h^{\,2}$. Since
$\min_{E_k}\widetilde R=1/\mu_k$ by \eqref{eq:max-min-pair}
(taking $H=E_k$ attains the max), we conclude
$1/\mu_k\le\widetilde R(\widehat\phi)\le C_h^{\,2}$, i.e.
$\mu_k\ge 1/C_h^{\,2}\ge\mu_{k,h}/(1+C_h^{\,2}\mu_{k,h})$
(the last inequality is elementary), which is \eqref{eq:cecr-abstract}.

\medskip
\emph{Case 2.} Otherwise $\widehat\Pi_h$ is injective on $E_k$, so
$\widehat\Pi_h(E_k)\subset\widehat V_h$ is a $k$-dimensional subspace.
Using $\widehat\Pi_h(E_k)$ as a trial subspace in the max-min for
$1/\mu_{k,h}$ in \eqref{eq:max-min-pair},
\[
  \frac{1}{\mu_{k,h}}
  \;\ge\;
  \min_{\widehat v_h\in\widehat\Pi_h(E_k)\setminus\{0\}}
  \widetilde R(\widehat v_h).
\]
Let $\widehat\phi\in E_k$ be a minimiser, so that with
$\widehat v_h:=\widehat\Pi_h\widehat\phi$,
\begin{equation}
\label{eq:step-proj-bound}
  \|\widehat v_h\|_{\widehat b}
  \;\le\; \mu_{k,h}^{\,-1/2}\,\|\widehat v_h\|_{\widehat a}.
\end{equation}
Combining the triangle inequality in the $\widehat b$-norm,
estimate \eqref{eq:step-proj-bound}, the error estimate
\eqref{eq:pair-error-estimate}, and Cauchy--Schwarz in $\mathbb R^2$
with weights $(\mu_{k,h}^{\,-1/2},C_h)$,
\begin{align*}
  \|\widehat\phi\|_{\widehat b}
  &\le \|\widehat v_h\|_{\widehat b}
      + \|\widehat\phi-\widehat v_h\|_{\widehat b} \\
  &\le \mu_{k,h}^{\,-1/2}\,\|\widehat v_h\|_{\widehat a}
      + C_h\,\|\widehat\phi-\widehat v_h\|_{\widehat a} \\
  &\le \sqrt{\mu_{k,h}^{\,-1}+C_h^{\,2}}\;
       \sqrt{\|\widehat v_h\|_{\widehat a}^{\,2}
             +\|\widehat\phi-\widehat v_h\|_{\widehat a}^{\,2}} \\
  &= \sqrt{\mu_{k,h}^{\,-1}+C_h^{\,2}}\;\|\widehat\phi\|_{\widehat a},
\end{align*}
where the last equality uses the Pythagorean identity
\eqref{eq:pair-pythagoras}. Hence
$\widetilde R(\widehat\phi)\le\mu_{k,h}^{\,-1}+C_h^{\,2}$, and since
$\widehat\phi\in E_k$ the max-min principle gives
$1/\mu_k=\min_{E_k}\widetilde R\le\widetilde R(\widehat\phi)
\le\mu_{k,h}^{\,-1}+C_h^{\,2}$. Inverting,
\[
  \mu_k \;\ge\;
  \frac{1}{\mu_{k,h}^{\,-1}+C_h^{\,2}}
  \;=\;
  \frac{\mu_{k,h}}{1+C_h^{\,2}\,\mu_{k,h}}.
\]
\end{proof}


\section{Graded-mesh estimate for \texorpdfstring{$\varepsilon_h$}{epsilon\_h}}
\label{app:graded-eps}

We record here the graded-mesh perturbation estimates used in the main text.
Let $\mathcal T_h$ be a simplicial mesh of $Q$, let $h$ be the mesh parameter
used in the grading law, and let \(c_h\) denote the elementwise-average
projection of \(V\):
\[
  c_h|_K=V_K:=|K|^{-1}\int_K V(x)\,dx .
\]
Throughout this appendix, $\omega_0$ denotes the singular patch, that is, the
union of the elements touching the singular point(s).

\paragraph{Key identity.}
Since $\int_K (V-V_K)\,dx=0$ on every cell $K$,
\begin{align}
((V-c_h)u,u)
&=\sum_{K}\int_K (V-V_K)u^2\,dx \notag\\
&=\sum_K \int_K (V-V_K)\bigl(u^2-(u^2)_K\bigr)\,dx,
\label{eq:Vchuident}
\end{align}
where $(u^2)_K:=|K|^{-1}\int_K u^2\,dx$.

In this appendix we estimate an admissible mesh-dependent perturbation
constant $d_h$ defined by
\begin{equation}
|((V-c_h)u,u)| \le d_h \|u\|_{H^1(Q)}^2
\qquad \forall\,u\in H^1(Q).
\label{eq:dh-def}
\end{equation}

\begin{theorem}[Direct patch/off-patch estimate, $N=3$]
\label{thm:graded-eps}
Let $N=3$, let $Q\subset \mathbb R^3$ be the auxiliary domain, and let
$V(x)=|x|^{-1}$. Then the constant $d_h$ in \eqref{eq:dh-def} satisfies
\begin{equation}
d_h
\le
\|V-c_h\|_{L^{3/2}(\omega_0)}\,C_6^2
\;+\;
\max_{\{K:\,0\notin \overline K\}}
\frac{h_K}{2}\,\|V-V_K\|_{L^\infty(K)}.
\label{eq:direct-est-c-grd}
\end{equation}
Here $C_6$ is the embedding constant of
$H^1(Q)\hookrightarrow L^6(Q)$.
\end{theorem}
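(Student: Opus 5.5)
Split the sum $((V-c_h)u,u)=\sum_K\int_K(V-V_K)u^2\,dx$ into the patch elements $K\subset\omega_0$ (those with $0\in\overline K$) and the off-patch elements $0\notin\overline K$. Bound each group separately by a multiple of $\|u\|_{H^1(Q)}^2$, then add the two constants to obtain \eqref{eq:direct-est-c-grd}.

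\emph{Patch part.} On $\omega_0$ we do not use the cancellation. Apply H\"older's inequality with exponents $(3/2,3)$:
\[
\Bigl|\int_{\omega_0}(V-c_h)u^2\,dx\Bigr|
\le\|V-c_h\|_{L^{3/2}(\omega_0)}\,\|u^2\|_{L^3(\omega_0)}
=\|V-c_h\|_{L^{3/2}(\omega_0)}\,\|u\|_{L^6(\omega_0)}^2 .
\]
Then bound $\|u\|_{L^6(\omega_0)}\le\|u\|_{L^6(Q)}\le C_6\|u\|_{H^1(Q)}$. The factor $\|V-c_h\|_{L^{3/2}(\omega_0)}$ is finite because $V\in L^{3/2}_{\rm loc}$ and $c_h$ is bounded on $\omega_0$ (by Jensen, $|V_K|^{3/2}\le (|V|^{3/2})_K$). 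This gives the first term.

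\emph{Off-patch part.} On each $K$ with $0\notin\overline K$, use the cancellation identity \eqref{eq:Vchuident}:
\[
\Bigl|\int_K (V-V_K)\bigl(u^2-(u^2)_K\bigr)\,dx\Bigr|
\le\|V-V_K\|_{L^\infty(K)}\,\|u^2-(u^2)_K\|_{L^1(K)} .
\]
Bound the $L^1$ oscillation of $w=u^2\in W^{1,1}(K)$ by an $L^1$ Poincar\'e inequality on the convex set $K$ with constant $h_K/2$:
\[
\|w-w_K\|_{L^1(K)}\le \frac{h_K}{2}\|\nabla w\|_{L^1(K)}
\]
(Acosta--Dur\'an). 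Since $\nabla(u^2)=2u\nabla u$, Cauchy--Schwarz and Young give
\[
\|\nabla w\|_{L^1(K)}\le 2\|u\|_K\|\nabla u\|_K\le \|u\|_{H^1(K)}^2 .
\]
Hence element $K$ contributes at most $\tfrac{h_K}{2}\|V-V_K\|_{L^\infty(K)}\|u\|_{H^1(K)}^2$. Summing over the off-patch elements and taking the maximum of the prefactor gives
\[
\max_{\{K:\,0\notin\overline K\}}\tfrac{h_K}{2}\|V-V_K\|_{L^\infty(K)}\,\|u\|_{H^1(Q)}^2 .
\]
Adding the patch and off-patch bounds yields the claimed bound on $d_h$.

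The main obstacle is the off-patch $L^1$ Poincar\'e step and its constant. The sharp convex-domain $L^1$ Poincar\'e constant is $h_K/2$, by Acosta--Dur\'an. It must be checked that it applies to $u^2$, which lies in $W^{1,1}(K)$ since $u\in H^1$. The second point to check is that Young's inequality $2ab\le a^2+b^2$ loses no constant, so the factor stays exactly $h_K/2$. Everything else is H\"older, the Sobolev embedding and summation. The later mean-value refinement $\|V-V_K\|_{L^\infty(K)}\le h_K r_K^{-2}$ is not needed for this theorem.
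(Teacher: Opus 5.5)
Your proposal is correct and follows the paper's proof essentially step for step. On the singular patch you use H\"older with exponents $(3/2,3)$ and the $L^6$ embedding. On each off-patch cell you use the cancellation identity, the convex-cell $L^1$ Poincar\'e inequality with constant $h_K/2$ applied to $u^2$, then Cauchy--Schwarz and Young, and finally sum over the cells.
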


\begin{proof}
By \eqref{eq:Vchuident},
\[
|((V-c_h)u,u)|
\le
\sum_{K\subset \omega_0}\int_K |V-V_K|\,u^2\,dx
\;+\;
\sum_{K\not\subset \omega_0}
\left|\int_K (V-V_K)\bigl(u^2-(u^2)_K\bigr)\,dx\right|.
\]

For the singular patch, Hölder's inequality gives
\[
\sum_{K\subset \omega_0}\int_K |V-V_K|\,u^2\,dx
\le
\|V-c_h\|_{L^{3/2}(\omega_0)} \|u\|_{L^6(\omega_0)}^2
\le
\|V-c_h\|_{L^{3/2}(\omega_0)} C_6^2 \|u\|_{H^1(Q)}^2.
\]

For an off-patch cell $K$, since $K$ is convex,
\[
\|u^2-(u^2)_K\|_{L^1(K)}
\le
\frac{h_K}{2}\|\nabla(u^2)\|_{L^1(K)}
=
h_K\int_K |u||\nabla u|\,dx.
\]
Hence
\[
\|u^2-(u^2)_K\|_{L^1(K)}
\le
h_K \|u\|_{L^2(K)}\|\nabla u\|_{L^2(K)}
\le
\frac{h_K}{2}\bigl(\|u\|_{L^2(K)}^2+\|\nabla u\|_{L^2(K)}^2\bigr).
\]
Therefore
\[
\left|\int_K (V-V_K)\bigl(u^2-(u^2)_K\bigr)\,dx\right|
\le
\frac{h_K}{2}\|V-V_K\|_{L^\infty(K)}
\bigl(\|u\|_{L^2(K)}^2+\|\nabla u\|_{L^2(K)}^2\bigr).
\]
Summing over $K$ with $0\notin \overline K$ yields \eqref{eq:direct-est-c-grd}.
\end{proof}

\begin{remark}[Expected graded rate in three dimensions]
If the mesh satisfies \Cref{ass:graded-mesh}(G1) and
\(\omega_0\subset B_\rho\), then the proof also gives
\begin{equation}
\label{eq:middle-way}
  d_h
  \le
  \frac12\sup_{\{K:\,0\notin\overline K\}}\frac{h_K^2}{r_K^2}
  +2\left(\frac{8\pi}{3}\right)^{2/3}\rho\,C_6^2 .
\end{equation}
Thus, for the three-dimensional choice \(\rho=\beta h^2\),
\begin{equation}
\label{eq:graded-eps-3d}
  d_h\le C_{\mathrm{grd}}h^2.
\end{equation}
\begin{equation}
\label{eq:Cgrd-3d}
  C_{\mathrm{grd}}
  :=
  \frac{\vartheta^2}{2}
  +2\beta C_6^2\left(\frac{8\pi}{3}\right)^{2/3}.
\end{equation}
\end{remark}

\begin{theorem}[Direct patch/off-patch estimate, $N=2$]
\label{thm:graded-eps-2d}
Let $N=2$, let $Q\subset \mathbb R^2$ be the auxiliary rectangle, and let
$V(x)=|x|^{-1}$. Then the constant $d_h$ in \eqref{eq:dh-def} satisfies
\begin{equation}
d_h
\le
\|V-c_h\|_{L^{4/3}(\omega_0)}\,S_8^2
\;+\;
\max_{\{K:\,0\notin \overline K\}}
\frac{h_K}{2}\,\|V-V_K\|_{L^\infty(K)}.
\label{eq:direct-est-c-grd-2d}
\end{equation}
Here $S_8$ is the embedding constant of
$H^1(Q)\hookrightarrow L^8(Q)$.
\end{theorem}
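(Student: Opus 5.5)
The argument will follow the proof of \Cref{thm:graded-eps} line by line, with the $N=3$ Hölder pairing $(3/2,3)$ replaced by the two-dimensional pairing $p=4/3$, $q=p/(p-1)=4$. We start from the cell-average cancellation identity \eqref{eq:Vchuident} and split the sum into the singular patch $\omega_0$ and the off-patch cells $K$ with $0\notin\overline K$. This gives
\[
|((V-c_h)u,u)|
\le
\sum_{K\subset\omega_0}\int_K|V-V_K|\,u^2\,dx
+\sum_{K\not\subset\omega_0}\Bigl|\int_K(V-V_K)\bigl(u^2-(u^2)_K\bigr)\,dx\Bigr|.
\]
On patch cells we deliberately do not use the cancellation, since $V-V_K$ is unbounded there. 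On off-patch cells $V$ is smooth, so the mean-zero structure can be exploited.

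\emph{Patch term.} We apply Hölder with exponents $4/3$ and $4$ to the product $|V-c_h|\cdot u^2$ on $\omega_0$:
\[
\int_{\omega_0}|V-c_h|\,u^2\,dx\le\|V-c_h\|_{L^{4/3}(\omega_0)}\,\|u^2\|_{L^4(\omega_0)}
=\|V-c_h\|_{L^{4/3}(\omega_0)}\,\|u\|_{L^8(\omega_0)}^2 .
\]
The norm $\|V-c_h\|_{L^{4/3}(\omega_0)}$ is finite because $|x|^{-1}\in L^{4/3}_{\rm loc}(\mathbb R^2)$, and Jensen gives $\|c_h\|_{L^{4/3}(K)}\le\|V\|_{L^{4/3}(K)}$. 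We then extend $\|u\|_{L^8(\omega_0)}$ to $\|u\|_{L^8(Q)}$ and use the embedding $H^1(Q)\hookrightarrow L^8(Q)$ with constant $S_8$. This yields the first summand $\|V-c_h\|_{L^{4/3}(\omega_0)}S_8^2\|u\|_{H^1(Q)}^2$. The only point to check here is the index bookkeeping: $L^8$ for $u$ is exactly what makes $u^2\in L^4$, the dual of $L^{4/3}$.

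\emph{Off-patch term.} For each convex cell $K$ with $0\notin\overline K$ we use the $L^1$ Poincaré inequality for convex domains, $\|w-w_K\|_{L^1(K)}\le\frac{h_K}{2}\|\nabla w\|_{L^1(K)}$, applied to $w=u^2\in W^{1,1}(K)$. Since $\nabla(u^2)=2u\nabla u$, this gives
\[
\|u^2-(u^2)_K\|_{L^1(K)}\le h_K\|u\|_{L^2(K)}\|\nabla u\|_{L^2(K)}\le\tfrac{h_K}{2}\|u\|_{H^1(K)}^2 .
\]
Pairing with $\|V-V_K\|_{L^\infty(K)}$, which is finite because $V$ is bounded on $K$ away from the origin, bounds each cell contribution. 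Summing over $K$ and pulling out the maximum over off-patch cells gives the second summand, since $\sum_K\|u\|_{H^1(K)}^2\le\|u\|_{H^1(Q)}^2$.

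The main obstacle is justifying the convex-domain $L^1$ Poincaré constant $h_K/2$. This step is inherited verbatim from the $N=3$ proof and is dimension-independent (Acosta--Durán), so we would simply cite it. Adding the two summands gives \eqref{eq:direct-est-c-grd-2d}. The two-centre case follows by applying the same splitting around each centre.
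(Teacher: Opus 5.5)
Your proposal is correct and matches the paper's proof essentially verbatim. Both use the cancellation identity \eqref{eq:Vchuident}, then H\"older with exponents $4/3$ and $4$ plus the $S_8$ embedding on the singular patch, and the convex $L^1$ Poincar\'e inequality with constant $h_K/2$ applied to $u^2$ on the off-patch cells. Your attribution of that $h_K/2$ constant to Acosta--Dur\'an is a useful addition, since the paper uses it without citation.
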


\begin{proof}
The proof is the same as in three dimensions. The off-patch estimate is
identical. On the singular patch one uses Hölder with exponents
$4/3$ and $4$:
\[
\sum_{K\subset \omega_0}\int_K |V-V_K|\,u^2\,dx
\le
\|V-c_h\|_{L^{4/3}(\omega_0)} \|u^2\|_{L^4(\omega_0)}
\le
\|V-c_h\|_{L^{4/3}(\omega_0)} S_8^2 \|u\|_{H^1(Q)}^2.
\]
This gives \eqref{eq:direct-est-c-grd-2d}.
\end{proof}

\begin{remark}[Expected graded rate in two dimensions]
For a two-dimensional rate statement, take any \(p\in(1,2)\), set
\(q=p/(p-1)\), and let \(S_{2q}\) be the constant of
\(H^1(Q)\hookrightarrow L^{2q}(Q)\).  If
\(\omega_0\subset B_{\beta h^{\gamma_{\rm p}}}\), then Jensen's inequality and
polar integration give
\begin{equation}
\label{eq:graded-eps-2d}
  d_h
  \le
  \frac{\vartheta^2}{2}h^2
  +2S_{2q}^2
  \left(\frac{2\pi}{2-p}\right)^{1/p}
  \beta^{2/p-1}h^{\gamma_{\rm p}(2/p-1)}.
\end{equation}
Consequently, if \(\gamma_{\rm p}(2/p-1)\ge2\), then
\begin{equation}
\label{eq:Cgrd-2d}
  d_h\le C_{\mathrm{grd}}^{2D}h^2,
  \qquad
  C_{\mathrm{grd}}^{2D}
  :=
  \frac{\vartheta^2}{2}
  +2S_{2q}^2
  \left(\frac{2\pi}{2-p}\right)^{1/p}
  \beta^{2/p-1}.
\end{equation}
\end{remark}

\medskip

\begin{remark}[Two-dimensional constants on rectangles]
\label{rmk:rect-L8}
Let $Q=[-a,a]\times[-b,b]$ and let $S_r(Q)$ denote a constant
satisfying
\[
  \|u\|_{L^r(Q)} \le S_r(Q)\,\|u\|_{H^1(Q)}.
\]
We recover here the explicit rectangle constants used in the two-dimensional
examples.

For $w\in W^{1,1}(Q)$, averaging the fundamental theorem of calculus in the
two coordinate directions yields
\begin{equation}
\label{eq:rectangle-line-ineq}
  \|w\|_{L^2(Q)}^2
  \le
  \left(\frac{\|w\|_{L^1(Q)}}{2a}+\|\partial_{x_1}w\|_{L^1(Q)}\right)
  \left(\frac{\|w\|_{L^1(Q)}}{2b}+\|\partial_{x_2}w\|_{L^1(Q)}\right).
\end{equation}

Applying \eqref{eq:rectangle-line-ineq} with $W=|u|^2$, followed by
Cauchy--Schwarz, yields
\[
  \|u\|_{L^4(Q)}^4
  \le
  \left(\frac{A^2}{2a}+2AB_x\right)
  \left(\frac{A^2}{2b}+2AB_y\right),
\]
where $A=\|u\|_{L^2(Q)}$, $B_x=\|u_{x_1}\|_{L^2(Q)}$, and
$B_y=\|u_{x_2}\|_{L^2(Q)}$. Hence
\begin{equation}
\label{eq:S4-rect-line}
  S_4(Q)^4
  \le
  \max_{A^2+B_x^2+B_y^2\le1}
  \left(\frac{A^2}{2a}+2AB_x\right)
  \left(\frac{A^2}{2b}+2AB_y\right).
\end{equation}

For $S_6$, apply \eqref{eq:rectangle-line-ineq} with $W=|u|^3$. Since
\[
  \int_Q |\partial_i(|u|^3)|
  \le 3\int_Q |u|^2|\partial_i u|
  \le 3\|u\|_{L^4(Q)}^2\|\partial_i u\|_{L^2(Q)}
\]
and
\[
  \|u\|_{L^3(Q)}^3\le \|u\|_{L^2(Q)}\|u\|_{L^4(Q)}^2,
\]
we obtain, after normalizing $\|u\|_{H^1(Q)}=1$,
\begin{equation}
\label{eq:S6-rect-line}
  S_6(Q)^6
  \le
  S_4(Q)^4
  \max_{A^2+B_x^2+B_y^2\le1}
  \left(\frac{A}{2a}+3B_x\right)
  \left(\frac{A}{2b}+3B_y\right).
\end{equation}

For $S_8$, use $W=|u|^4$. The estimates
\[
  \int_Q |\partial_i(|u|^4)|
  \le 4\int_Q |u|^3|\partial_i u|
  \le 4\|u\|_{L^6(Q)}^3\|\partial_i u\|_{L^2(Q)}
\]
give
\begin{equation}
\label{eq:S8-rect-line}
  S_8(Q)^8
  \le
  \max_{B_x^2+B_y^2\le1}
  \left(\frac{S_4(Q)^4}{2a}+4S_6(Q)^3B_x\right)
  \left(\frac{S_4(Q)^4}{2b}+4S_6(Q)^3B_y\right).
\end{equation}

Solving the finite-dimensional maximization problems
\eqref{eq:S4-rect-line}--\eqref{eq:S8-rect-line} gives
\[
\begin{array}{c|ccc}
\hline
Q & S_4(Q) & S_6(Q) & S_8(Q)\\ \hline
[-5,5]^2 & 0.872 & 1.173 & 1.466\\
\hline
[-7,7]\times[-5,5] & 0.867 & 1.169 & 1.462 \\
\hline
\end{array}
\]
\end{remark}

\begin{remark}[A directly computable $L^6$ bound on rectangular boxes]
\label{rmk:c6-boxes}
For a box $Q_{a,b,c}:=[-a,a]\times[-b,b]\times[-c,c]$, a sharper three-dimensional
embedding constant is obtained by combining a pointwise
fundamental-theorem-of-calculus estimate with interpolation.

For smooth $u$ on $Q_{a,b,c}$, fix $(x_1,x_2,x_3)\in Q_{a,b,c}$. Applying the
fundamental theorem of calculus successively in the three coordinate
directions and averaging over the corresponding interval gives
\begin{align*}
  |u(x_1,x_2,x_3)|
  \le{}&
  (8abc)^{-1/2}\|u\|_{L^2(Q_{a,b,c})}
  + \sqrt{\frac{a}{2bc}}\|u_{x_1}\|_{L^2(Q_{a,b,c})}\\
  &+ \sqrt{\frac{b}{2ac}}\|u_{x_2}\|_{L^2(Q_{a,b,c})}
  + \sqrt{\frac{c}{2ab}}\|u_{x_3}\|_{L^2(Q_{a,b,c})}.
\end{align*}
Hence, by Cauchy--Schwarz,
\[
\|u\|_{L^\infty(Q_{a,b,c})}^2
\le
\frac{1+4(a^2+b^2+c^2)}{8abc}\,
\|u\|_{H^1(Q_{a,b,c})}^2.
\]
Now interpolate between $L^\infty$ and $L^2$:
\[
\|u\|_{L^6(Q_{a,b,c})}
\le
\|u\|_{L^\infty(Q_{a,b,c})}^{2/3}
\|u\|_{L^2(Q_{a,b,c})}^{1/3},
\]
so that
\[
  \|u\|_{L^6(Q_{a,b,c})}
  \le
  \left(
    \frac{1+4(a^2+b^2+c^2)}{8abc}
  \right)^{1/3}
  \|u\|_{H^1(Q_{a,b,c})}.
\]
This yields
\begin{equation}
C_6(Q_{a,b,c})
\le
\left(\frac{1+4(a^2+b^2+c^2)}{8abc}\right)^{1/3}.
\label{eq:C6-box-line}
\end{equation}
The derivation uses $\|u\|_{L^\infty}$ only for smooth functions. Since
$C^\infty(\overline Q_{a,b,c})$ is dense in $H^1(Q_{a,b,c})$ and the constant
in \eqref{eq:C6-box-line} depends only on the box, the same inequality extends
to every $u\in H^1(Q_{a,b,c})$ by approximation.

For the two-center three-dimensional computation,
\[
Q_2=[-8,8]\times[-6,6]^2,
\]
and therefore
\[
C_6(Q_2)\le \left(\frac{545}{2304}\right)^{1/3}\approx 0.6184.
\]
\end{remark}
\begin{remark}[Embedding constant for a ball]
\label{rmk:c6-ball}
For balls, however, one can obtain a fully explicit estimate by a
radial line-segment extension.  Let
\[
  B_R:=\{x\in\mathbb R^3: |x|<R\}.
\]
We use the sharp Sobolev constant on \(\mathbb R^3\),
\begin{equation}
\label{eq:K3-talenti-correct}
  \|w\|_{L^6(\mathbb R^3)}
  \le K_3\|\nabla w\|_{L^2(\mathbb R^3)},
  \qquad
  K_3
  =
  \frac{1}{\sqrt{3\pi}}
  \left(\frac{4}{\sqrt{\pi}}\right)^{1/3}
  \approx 0.4273 ,
\end{equation}
see Talenti~\cite{Talenti1976}.
For smooth $u$ define an extension $Eu$ on $\mathbb R^3$ by
\[
  Eu(x)=u(x),\quad |x|<R,
\]
\[
  Eu(r\theta)=\left(\frac{2R-r}{R}\right)^2u((2R-r)\theta),
  \quad R<r<2R,\quad \theta\in\mathbb S^2,
\]
and $Eu(x)=0$ for $|x|\ge 2R$.  This extension is continuous across
$\partial B_R$ and vanishes at $|x|=2R$.  Writing $s=2R-r$ in the annulus,
we now estimate its gradient explicitly.

In polar coordinates,
\[
  |\nabla f|^2 = |\partial_r f|^2+r^{-2}|\nabla_{\mathbb S^2}f|^2 .
\]
On the annulus $R<r<2R$, set
\[
  w(r,\theta):=Eu(r\theta)=\left(\frac{s}{R}\right)^2u(s\theta),
  \qquad s=2R-r .
\]
The radial derivative is
\[
  \partial_r w
  =
  -\left(\frac{s}{R}\right)^2 u_r(s\theta)
  -\frac{2s}{R^2}u(s\theta),
\]
where $u_r$ denotes the radial derivative of $u$.  Hence, for any
$\delta>0$,
\[
  |\partial_r w|^2
  \le
  (1+\delta^{-1})\left(\frac{s}{R}\right)^4 |u_r(s\theta)|^2
  +(1+\delta)\frac{4s^2}{R^4}|u(s\theta)|^2 .
\]
After multiplying by the annular Jacobian $r^2\,dr\,d\theta$ and changing
variables from $r$ to $s$, we use
\[
  r=2R-s,\qquad 0<s<R,\qquad
  \left(\frac{s}{R}\right)^4r^2\le s^2,\qquad
  \frac{4s^2r^2}{R^4}\le \frac{16}{R^2}s^2 .
\]
Therefore
\begin{equation}
\label{eq:radial-extension-detail}
  \int_{R<|x|<2R} |\partial_r Eu|^2\,dx
  \le
  (1+\delta^{-1})\int_{B_R}|u_r|^2\,dx
  +
  \frac{16(1+\delta)}{R^2}\int_{B_R}|u|^2\,dx .
\end{equation}
For the tangential part,
\[
  \nabla_{\mathbb S^2}w
  =
  \left(\frac{s}{R}\right)^2\nabla_{\mathbb S^2}u(s\theta).
\]
Thus
\[
  r^{-2}|\nabla_{\mathbb S^2}w|^2 r^2\,dr\,d\theta
  =
  \left(\frac{s}{R}\right)^4
  |\nabla_{\mathbb S^2}u(s\theta)|^2\,dr\,d\theta .
\]
Changing variables again and using $s^2/R^4\le s^{-2}$ for $0<s<R$ gives
\begin{equation}
\label{eq:tangential-extension-detail}
  \int_{R<|x|<2R} r^{-2}|\nabla_{\mathbb S^2}Eu|^2\,dx
  \le
  \int_{B_R}s^{-2}|\nabla_{\mathbb S^2}u|^2\,dx .
\end{equation}
Adding \eqref{eq:radial-extension-detail} and
\eqref{eq:tangential-extension-detail}, and then adding the interior
contribution over $B_R$, gives
\begin{equation}
\label{eq:ball-extension-gradient}
  \|\nabla Eu\|_{L^2(\mathbb R^3)}^2
  \le
  \left(2+\delta^{-1}\right)\|\nabla u\|_{L^2(B_R)}^2
  +
  \frac{16(1+\delta)}{R^2}\|u\|_{L^2(B_R)}^2 .
\end{equation}

Applying the sharp Sobolev inequality \eqref{eq:K3-talenti-correct} to $Eu$
gives the direct ball embedding
\begin{equation}
\label{eq:C6-ball-polar-delta}
  C_6(B_R)
  \le
  K_3
  \max\left\{
    2+\delta^{-1},\frac{16(1+\delta)}{R^2}
  \right\}^{1/2},
  \qquad \delta>0 .
\end{equation}
Equating the two terms gives the optimal choice
\[
  2+\delta_R^{-1}=\frac{16(1+\delta_R)}{R^2},
\]
and hence the following rigorous values:
\[
\begin{array}{c|c|c}
\hline
R & \delta_R & C_6(B_R)\text{ from }\eqref{eq:C6-ball-polar-delta}\\ \hline
5 & 2.703 & 0.658\\
6 & 4.055 & 0.642\\
8 & 7.531 & 0.624\\
\hline
\end{array}
\]

\end{remark}


\section{Explicit form-bound constants}
\label{app:ceps-analytic}

We record here explicit form-bound constants for the Coulomb potentials used in
the numerical experiments. The form-bound constant $C_\epsilon$ is defined as
the smallest constant such that
\[
\int_Q \frac{u(x)^2}{|x|}\,dx
\le
\epsilon \|\nabla u\|_{L^2(Q)}^2
+ C_\epsilon \|u\|_{L^2(Q)}^2
\qquad (u\in H^1(Q)).
\]
In the two-center cases, the potential is
\[
V(x)=\frac{1}{|x-a_1|}+\frac{1}{|x-a_2|}.
\]

\subsection{Direct spectral estimate in two dimensions}
\label{rmk:ceps-square-direct}

For the square
\[
Q=[-5,5]^2,
\]
the quantity of direct interest is
\[
C_\epsilon^\sharp(Q)
:=
\sup_{u\in H^1(Q)\setminus\{0\}}
\frac{\displaystyle \int_Q |x|^{-1}u(x)^2\,dx
-\epsilon\int_Q |\nabla u(x)|^2\,dx}
{\displaystyle \int_Q u(x)^2\,dx}.
\]
Then
\[
\int_Q \frac{u(x)^2}{|x|}\,dx
\le
\epsilon\|\nabla u\|_{L^2(Q)}^2
+
C_\epsilon^\sharp(Q)\|u\|_{L^2(Q)}^2.
\]
For $\epsilon=0.5$, the direct finite-dimensional spectral approximation gives
\[
C_{0.5}^\sharp([-5,5]^2)\approx 0.215.
\]
This value is numerically sharp, though not a verified enclosure.

\paragraph{Rigorous one-center square bound.}
For the same square, splitting $Q$ into an inner disk $B_\rho$ and the outer
part $Q\setminus B_\rho$ yields
\[
\int_Q \frac{u(x)^2}{|x|}\,dx
\le
0.5\,\|\nabla u\|_{L^2(Q)}^2
+
\left(
\frac{1}{\rho}
+
\frac{(3\pi)^{3/4}}{10}\rho^{1/2}
+
(3\pi)^{3/2}\rho
\right)\|u\|_{L^2(Q)}^2.
\]
Optimizing in $\rho$ gives the rigorous explicit bound
\[
C_{0.5}([-5,5]^2)\le 10.99.
\]

\paragraph{Rigorous two-center rectangle bound.}
Let
\[
Q=[-7,7]\times[-5,5],
\qquad
a_1=(-2,0),
\qquad
a_2=(2,0).
\]
Splitting $Q$ into the two half-rectangles $\{x_1\le 0\}$ and $\{x_1\ge 0\}$,
the far-center contribution is bounded by $1/2$, and the same disk-splitting
argument gives
\[
\int_Q \left(\frac{1}{|x-a_1|}+\frac{1}{|x-a_2|}\right)u(x)^2\,dx
\le
0.5\,\|\nabla u\|_{L^2(Q)}^2
+
11.54\,\|u\|_{L^2(Q)}^2.
\]
Hence
\[
C_{0.5}^{(2)}([-7,7]\times[-5,5])\le 11.54.
\]

\subsection{Three-dimensional constants used in the computations}

\paragraph{One-center ball.}
The one-center 3D computation is carried out on the ball
\[
Q=B(0,6).
\]
By \Cref{rmk:c6-ball}, the embedding constant satisfies
\[
  C_6(B(0,6))\le 0.642.
\]
For any \(0<\rho<6\), splitting \(B(0,6)\) into \(B_\rho\) and its complement gives
\[
\int_{B(0,6)}\frac{u(x)^2}{|x|}\,dx
\le
\left(\frac{8\pi}{3}\right)^{2/3}\rho\,C_6(B(0,6))^2
\|u\|_{H^1(B(0,6))}^2
+
\frac1\rho\|u\|_{L^2(B(0,6))}^2 .
\]
Set
\[
  A_B:=\left(\frac{8\pi}{3}\right)^{2/3} C_6(B(0,6))^2 .
\]
Then
\[
\int_{B(0,6)}\frac{u(x)^2}{|x|}\,dx
\le
A_B\rho\,\|\nabla u\|_{L^2(B(0,6))}^2
+
\left(\frac1\rho+A_B\rho\right)\|u\|_{L^2(B(0,6))}^2 .
\]
For the numerical value \(\epsilon=0.6\), choose \(\rho=\epsilon/A_B\).
Since \(A_B\le 1.701\), this gives \(\rho\approx0.353<6\), and therefore
\begin{equation}
\label{eq:ceps-ball-3d}
  C_{0.6}(B(0,6))
  \le
  \frac{A_B}{0.6}+0.6
  \le 3.44 .
\end{equation}
The computations use the slightly more conservative value \(C_\epsilon=5.0\).

\paragraph{Two-center box.}
The two-center 3D computation uses
\[
Q=[-8,8]\times[-6,6]^2,\qquad
a_1=(-2,0,0),\qquad a_2=(2,0,0).
\]
From \Cref{rmk:c6-boxes},
\[
  C_6(Q)\le \left(\frac{545}{2304}\right)^{1/3}\approx0.619.
\]
Splitting \(Q\) into the two half-boxes \(\{x_1\le0\}\) and \(\{x_1\ge0\}\),
the contribution from the farther Coulomb center is bounded by
\(\frac12\|u\|_{L^2(Q)}^2\).  Applying the preceding inner/outer split to
the nearer center on each half-box gives, with
\[
  A_Q:=\left(\frac{8\pi}{3}\right)^{2/3}C_6(Q)^2,
\]
\[
\int_Q\left(\frac1{|x-a_1|}+\frac1{|x-a_2|}\right)u(x)^2\,dx
\le
A_Q\rho\,\|\nabla u\|_{L^2(Q)}^2
+
\left(\frac1\rho+A_Q\rho+\frac12\right)\|u\|_{L^2(Q)}^2 .
\]
For \(\epsilon=0.6\), take \(\rho=\epsilon/A_Q\).  Since
\(A_Q\le1.581\), this gives
\begin{equation}
\label{eq:ceps-box-3d-two-center}
  C_{0.6}^{(2)}(Q)
  \le
  \frac{A_Q}{0.6}+0.6+\frac12
  \le 3.74 .
\end{equation}
Again, the numerical experiments use the conservative value
\(C_\epsilon=5.0\).

\section{Rigorous lower bound for the sharp form constant}
\label{app:ceps-eta}

This appendix gives the computable formula promised in
\Cref{rmk:ceps-sharp} for the minimization problem
\eqref{eq:opt-c-eps}.  Fix the coefficient $\epsilon\in(0,1)$ in that
quotient and write $c_h^-$ for the negative part of the cell-average
potential.
The optimal form-bound constant for this fixed $\epsilon$ is
\[
  C_\epsilon^{\rm opt}=-\eta,\qquad
  \eta
  =
  \inf_{0\ne u\in H^1(\Omega)}
  \frac{\epsilon\|\nabla u\|^2-(c_h^-u,u)}{\|u\|^2}.
\]
Since the trial function $u=1$ gives $\eta\le -(c_h^-,1)/(1,1)<0$ whenever
$c_h^-\not\equiv0$, a rigorous lower bound for $\eta$ immediately yields a
rigorous upper bound for $C_\epsilon^{\rm opt}$.

\subsection*{Scaled Schr\"odinger problem}

Divide the Rayleigh quotient by $\epsilon$ and introduce the sign-changing
piecewise constant potential
\[
  q_h:=-\frac{c_h^-}{\epsilon}.
\]
Let $\nu_1$ be the first eigenvalue of
\[
  -\Delta + q_h
  \quad\text{on }H^1(\Omega)
\]
with the natural Neumann boundary condition.  Then
\begin{equation}
\label{eq:eta-nu-scale}
  \eta=\epsilon\,\nu_1.
\end{equation}
Thus it is enough to compute a certified lower bound for $\nu_1$.

\subsection*{Preliminary coercive shift}

Choose a preliminary, possibly {\em non-sharp}, form bound for $c_h^-$:
\begin{equation}
\label{eq:appC-prelim-form-bound}
  (c_h^-u,u)
  \le
  \alpha\|\nabla u\|^2 + C_\alpha\|u\|^2
  \qquad \forall u\in H^1(\Omega),
\end{equation}
where $0<\alpha<\epsilon$.  For instance, $\alpha$ and $C_\alpha$ may
be taken from the Hardy--cutoff/local-domination estimate discussed in
\Cref{rmk:kato-ch-valid}.  Then the negative part of $q_h$ satisfies
\[
  (q_h^-u,u)
  =
  \epsilon^{-1}(c_h^-u,u)
  \le
  \frac{\alpha}{\epsilon}\|\nabla u\|^2
  + \frac{C_\alpha}{\epsilon}\|u\|^2.
\]
Set
\[
  \varepsilon_*:=\frac{\alpha}{\epsilon},
  \qquad
  C_*:=\frac{C_\alpha}{\epsilon},
\]
and choose a shift $\sigma_*>C_*$.  This makes the shifted form for
$-\Delta+q_h+\sigma_*$ coercive.

\subsection*{Discrete shifted problem}

Let $\nu_{1,h}^{\sigma_*}$ be the first CECR Ritz value of the shifted problem
\begin{equation}
\label{eq:evp-eta}
  (\nabla_h u_h,\nabla_h v_h)
  + ((q_h+\sigma_*)\Pi_{0,h}u_h,\Pi_{0,h}v_h)
  =
  \nu_{1,h}^{\sigma_*}(u_h,v_h),
  \qquad u_h,v_h\in V_h^{\ECR}.
\end{equation}
Equivalently, this is the generalized matrix eigenvalue problem
\begin{equation}
\label{eq:appC-matrix-shifted}
  \left(K_{\ECR}
  -\frac{1}{\epsilon}C^-_{0,h}
  +\sigma_* M_{0,h}\right)U
  =
  \nu_{1,h}^{\sigma_*}M_{\ECR}U,
\end{equation}
where $K_{\ECR}$ and $M_{\ECR}$ are the CECR stiffness and mass matrices for
$(\nabla_h\cdot,\nabla_h\cdot)$ and $(\cdot,\cdot)$, while
$M_{0,h}$ and $C^-_{0,h}$ are the projected mass and reaction matrices for
$(\Pi_{0,h}\cdot,\Pi_{0,h}\cdot)$ and
$(c_h^-\Pi_{0,h}\cdot,\Pi_{0,h}\cdot)$.

Define the elementwise correction factor
\begin{equation}
\label{eq:appC-gamma-A}
  \Gamma_h^*
  :=
  \max_{K\in\mathcal T^h}
  \frac{h_K^2\|q_h^-\|_{L^\infty(K)}}{\pi^2}
  =
  \frac{1}{\epsilon}
  \max_{K\in\mathcal T^h}
  \frac{h_K^2\|c_h^-\|_{L^\infty(K)}}{\pi^2},
  \qquad
  A_h^*
  :=
  \frac{\Gamma_h^*}{1-\varepsilon_*}.
\end{equation}
Let $\widetilde C_h\ge C_h$ be any certified upper bound for the CECR
interpolation constant, for example
\[
  \widetilde C_h=\max_{K\in\mathcal T^h}h_K/\pi.
\]

\subsection*{Certified lower bound for \texorpdfstring{$\eta$}{eta}}

Applying \Cref{thm:cecr-lb-convergent} to the scaled potential $q_h$ gives
\begin{equation}
\label{eq:appC-nu-lower}
  \nu_1
  \ge
  \underline\nu_{1,h}
  :=
  \frac{\nu_{1,h}^{\sigma_*}}
       {(1+A_h^*)\bigl(1+\widetilde C_h^{\,2}\nu_{1,h}^{\sigma_*}\bigr)}
  -\sigma_* .
\end{equation}
Combining \eqref{eq:eta-nu-scale} and \eqref{eq:appC-nu-lower}, the desired
computable lower bound for the minimizer in \eqref{eq:opt-c-eps} is
\begin{equation}
\label{eq:appC-eta-lower}
  \eta
  \ge
  \underline\eta_h
  :=
  \epsilon
  \left[
  \frac{\nu_{1,h}^{\sigma_*}}
       {(1+A_h^*)\bigl(1+\widetilde C_h^{\,2}\nu_{1,h}^{\sigma_*}\bigr)}
  -\sigma_*
  \right].
\end{equation}
Consequently,
\begin{equation}
\label{eq:appC-Ceps-upper}
  C_\epsilon^{\rm opt}
  =
  -\eta
  \le
  -\underline\eta_h,
\end{equation}
provided $\underline\eta_h<0$; otherwise the sharper statement is simply that
$C_\epsilon^{\rm opt}=0$.  In computation one therefore uses
\[
  C_{\epsilon,h}^{\rm cert}:=\max\{0,-\underline\eta_h\}
\]
as a rigorous, mesh-computable replacement for the crude constant obtained from
Hardy or Sobolev inequalities.

\bibliographystyle{siamplain}
\bibliography{library}

\end{document}